\title {On some families of smooth affine spherical varieties of full rank}
\author{Kay Paulus}
\address{Department Mathematik, FAU Erlangen-N\"urnberg}
\email{paulus@math.fau.de}
\author{Guido Pezzini}
\address{Dipartimento di Matematica ``Guido Castelnuovo'', ``Sapienza'' Universit\`a di Roma}
\email{pezzini@mat.uniroma1.it}
\author{Bart Van Steirteghem}
\address{Department Mathematik, FAU Erlangen-N\"urnberg \& Department of Mathematics, Medgar Evers College - City University of New York}
\email{bartvs@mec.cuny.edu}
\definecolor{sqsqsq}{rgb}{0.12549019607843137,0.12549019607843137,0.12549019607843137}
\newtheorem{theorem}{Theorem}[section]
\newtheorem{lemma}[theorem]{Lemma}
\newtheorem{proposition}[theorem]{Proposition}
\newtheorem{corollary}[theorem]{Corollary}
\theoremstyle{definition}
\newtheorem{definition}[theorem]{Definition}
\newtheorem{remark}[theorem]{Remark}
\newtheorem{example}[theorem]{Example}
\newtheorem{List}[theorem]{List}
\numberwithin{equation}{section}
\newcommand{\supp}{\mathrm{supp}}
\newcommand{\C}{\mathbb C}
\newcommand{\Z}{\mathbb Z}
\newcommand{\N}{\mathbb N}
\newcommand{\Q}{\mathbb Q}
\newcommand{\R}{\mathbb R}
\newcommand{\A}{\mathbf A}
\newcommand{\GL}{\mathrm{GL}}
\newcommand{\SL}{\mathrm{SL}}
\newcommand{\Spin}{\mathrm{Spin}}
\newcommand{\SO}{\mathrm{SO}}
\newcommand{\Sp}{\mathrm{Sp}}
\newcommand{\SU}{\mathrm{SU}}
\newcommand{\Chi}{\mathcal X}
\newcommand{\fk}{\mathfrak{k}}
\newcommand{\ft}{\mathfrak{t}}
\newcommand{\su}{\mathfrak{su}}
\newcommand{\sA}{\mathsf{A}}
\newcommand{\sB}{\mathsf{B}}
\newcommand{\sC}{\mathsf{C}}
\newcommand{\sD}{\mathsf{D}}
\newcommand{\sE}{\mathsf{E}}
\newcommand{\sF}{\mathsf{F}}
\newcommand{\sG}{\mathsf{G}}
\DeclareMathOperator{\rk}{rk}
\DeclareMathOperator{\Hom}{Hom}
\DeclareMathOperator{\Lie}{Lie}
\DeclareMathOperator{\soc}{soc}
\newcommand{\<}{\langle}
\renewcommand{\>}{\rangle}
\newcommand{\onto}{\twoheadrightarrow}
\newcommand{\into}{\hookrightarrow}
\newcommand{\wm}{\Gamma}
\newcommand{\dw}{\Lambda^+}
\newcommand{\wl}{\Lambda}
\newcommand{\rl}{\Lambda_R}
\newcommand{\sr}{S}
\newcommand{\SNWM}{\Sigma^N(\wm)}
\newcommand{\slcst}{\SL(2)\times \C^{\times}}
\newcommand{\D}{\mathcal{D}}
\newcommand{\eps}{\varepsilon}
\newcommand{\om}{\omega}
\newcommand{\inn}{\subset}
\newcommand{\loccit}{{\em loc.cit.}}
\newcommand{\G}{\wm}
\renewcommand{\a}{\alpha}
\renewcommand{\d}{\delta}
\newcommand{\la}{\<}
\newcommand{\ra}{\>}
\renewcommand{\b}{\beta}
\newcommand{\w}{\omega}
\newcommand{\bpm}{\begin{pmatrix}}
\newcommand{\epm}{\end{pmatrix}}
\renewcommand{\P}{\mathcal{P}}
\begin{document}

\begin{abstract}
Let $G$ be a complex connected reductive group. I.~Losev has shown that a smooth affine spherical $G$-variety $X$ is uniquely determined by its weight monoid, which is the set of irreducible representations of $G$ that occur in the coordinate ring of $X$. In this paper we use a combinatorial characterization of the weight monoids of smooth affine spherical varieties to classify: (a) all such varieties for $G=\SL(2) \times \C^{\times}$ and (b) all such varieties for $G$ simple which have a $G$-saturated weight monoid of full rank. We also use the characterization and F.~Knop's classification theorem for multiplicity free Hamiltonian manifolds to give a new proof of C.~Woodward's result that every reflective Delzant polytope is the moment polytope of such a manifold.
\end{abstract}

\maketitle

\section{Introduction} \label{sec:intro}
Spherical varieties play a role in many areas of mathematics, including symplectic geometry. This provides the main motivation for this paper, in which we combinatorially classify certain families of smooth affine spherical varieties, as described in more detail later in this introduction. We first recall, following \cite{knop-autoHam}, the connection between smooth affine spherical varieties and multiplicity free Hamiltonian manifolds. This will explain how explicit classifications of families of such varieties, like the ones in this paper, can be used to generate (families of) examples of multiplicity free Hamiltonian manifolds. We have endeavored to make the presentation accessible to non-experts, and we hope that it shows that the combinatorial theory of spherical varieties can be \emph{used} effectively. 

A \emph{Hamiltonian manifold} $M$ is a compact connected symplectic manifold equipped with an action of a compact connected Lie group $K$ and a $K$-equivariant moment map $\mu: M \to \fk^{*}$, where $\fk^*$ is the dual of the Lie algebra $\fk$ of $K$. Let us choose a maximal torus $T_{\R}$ in $K$ and a dominant Weyl chamber $\ft^+$ in the dual $\Lie(T_{\R})^*$ of the Lie algebra of $T_{\R}$. We can identify $\Lie(T_{\R})^*$ with the subspace of $\fk^*$ fixed by the coadjoint action of $T_{\R}$. In \cite{kirwan-convexity}, F. Kirwan showed that $\mu(M) \cap \ft^+$ is a convex polytope. We call it the \emph{moment polytope} of $M$ and denote it by $\mathcal{P}_M$. In \cite{guill&stern-mf}, V.~Guillemin and S.~Sternberg introduced an important class of such manifolds: a Hamiltonian manifold $M$ is called \emph{multiplicity free} if every symplectic reduction $\mu^{-1}(x)/K_x$ is a point. Here $x\in \mu(M)$ and $K_x = \{k \in K \colon k\cdot x = x\}$ is the isotropy group of $x$ under the coadjoint action of $K$. 

When $K$ is a (compact) torus, i.e.\ when $K = \mathrm{U}(1)^r$ for some $r \in \N$, and the action of $K$  is effective, then a multiplicity free Hamiltonian $K$-manifold is called a \emph{symplectic toric manifold}. In his influential 1988 paper \cite{delzant-abel}, T.~Delzant showed that symplectic toric manifolds are uniquely determined by their moment polytope. He also gave a combinatorial description of the polytopes that can occur as moment polytopes of symplectic toric manifolds: they are the so-called \emph{Delzant polytopes} (which are also known as simple regular polytopes). Building on work of M.~Brion \cite{brion-applicmoment}, R.~Sjamaar \cite{sjamaar-convexreex} and I.~Losev \cite{losev-knopconj}, F.~Knop generalized both of Delzant's results to nonabelian groups $K$ in \cite{knop-autoHam}, using smooth affine spherical varieties. In addition, Knop further extended these results recently to the setting of multiplicity free quasi-Hamiltonian manifolds in \cite{knop-qham-arxiv}, and, in \cite{woodward-spherical}, C.~Woodward used the theory of spherical varieties to study the existence of compatible K\"ahler structures on multiplicity free Hamiltonian manifolds.

Let us give more details on the results of \cite{knop-autoHam}. Theorem 10.2 of \loccit\ says that \emph{a multiplicity free Hamiltonian $K$-manifold $M$ is uniquely determined by its moment polytope $\mathcal{P}_M$ and its generic isotropy group $K_M$.} To state Knop's theorem describing the ``admissible'' moment polytopes we need some more notation. 

Let $T$ be the complexification of $T_{\R}$ and let $\wl:=\Hom(T,\C^{\times}) \cong \Z^{\dim T}$ be the character group of $T$. Then $T$ is a maximal torus of the complexification $G$ of $K$, which is a connected reductive group. The choice of the dominant Weyl chamber $\ft^+$ above corresponds to the choice of a Borel subgroup $B$ of $G$. Restriction of characters yields an identification of $\wl$ with the character group $\Hom(T_{\R},\C^{\times})$. Furthermore, $\lambda \mapsto (2\pi i)^{-1} d\lambda$ is  an embedding $\Hom(T_{\R},\C^{\times}) \into \Lie(T_{\R})^*$, where $d\lambda$ is the differential of $\lambda$ at the identity. We also identify $\wl$ with the image of this embedding, which is a sublattice of $\Lie(T_{\R})^*$ that spans $\Lie(T_{\R})^*$ as a vector space. When $a \in \ft^{+} \inn \fk^{*}$, we will use $G(a)$ for the complexification of the stabilizer $K_a:=\{k \in K \colon k\cdot a = a\}$.  It is well known that $T \inn G(a)$ and that $G(a)$ is a Levi subgroup of $G$. 
If $\mathcal{P}$ is a convex polytope in $\ft^+$ and $a \in \mathcal{P}$, then the \emph{tangent cone} to $\mathcal{P}$ at $a$ is 
\begin{equation}
C_a\mathcal{P}:= \R_{\ge 0}(\mathcal{P}-a).
\end{equation}
Observe that $C_a\mathcal{P}$ is a cone in
\begin{equation}
C_a\mathcal{\ft^+}:= \R_{\ge 0}(\ft^+-a) \inn \Lie(T_{\R})^*.
\end{equation}
Furthermore, $C_a\mathcal{\ft^+}$ is the dominant Weyl chamber for $G(a)$ corresponding to its Borel subgroup $B\cap G(a)$. 

As explained in \cite[pp.570-571]{knop-autoHam}, given $\mathcal{P}_M$ the information about the generic isotropy group $K_M$ of a multiplicity free Hamiltonian $K$-manifold $M$ can be encoded in a sublattice $\wl_M$ of $\wl$. Indeed, let $L_{\R} \inn K$ be the centralizer of $\mathcal{P}_M$ (seen as a subset of $\fk^*$), that is, $L_{\R}=\{k \in K \colon k\cdot p = p \text{ for every } p \in \mathcal{P}_M\}$. Then $L_{\R}$ is a Levi subgroup of $K$ containing $T_{\R}$ and $L_{\R}/K_M$ is a torus. Consequently, $K_M$ is determined by the character group 
\begin{equation}
\Lambda_M:=\Hom(L_{\R}/K_M, \C^{\times}),
\end{equation} 
which is a subgroup of $\Lambda \equiv \Hom(T_{\R},\C^{\times}).$

Knop showed that the moment polytopes of multiplicity free Hamiltonian manifolds locally ``look like'' the weight monoids of smooth affine spherical varieties (see Definition~\ref{def:weight_monoid} below). To be more precise,  Theorem 11.2 of \cite{knop-autoHam} tells us that \emph{a pair $(\mathcal{P}, \wl_0)$, with $\mathcal{P}$ a polytope in $\ft^+$ and $\wl_0$ a subgroup of $\wl$, is equal to $(\mathcal{P}_M,\wl_M)$  for some multiplicity free Hamiltonian $K$-manifold $M$ if and only if for every vertex $a$ of $\mathcal{P}$ there exists a smooth affine spherical $G(a)$-variety $X_a$ such that 
\begin{equation} \label{eq:sphericity}
\begin{split}
C_a\mathcal{P}&= \text{the cone generated by the weight monoid of $X_a$;  and}\\
\wl_0 &= \text{the abelian group generated by the weight monoid of $X_a$}.
\end{split}
\end{equation}
}
We recall that a normal $G$-variety is called \textbf{spherical} if it contains a dense orbit under the action of a Borel subgroup of $G$. 

Correspondingly, the varieties $X_a$ can be considered as ``building blocks'' for the manifold $M$. Let us recall briefly in which sense, referring to Section 2 of \cite{knop-autoHam} for details: the manifold $M$ admits an open cover $M=\bigcup_a U_a$, where the union is over all vertices $a$ of $\mathcal P_M$ and $U_a$ is $K$-stable for all $a$. Moreover, the subset $U_a$ is a $K$-equivariant fibre bundle of the form $K\times^{K_a} Y_a$, where $Y_a$ is a $K_a$-stable open subset of $X_a$ intersecting the closed $G(a)$-orbit.

\begin{example}\label{ex:su2}
Let $K=\SU(2)$. Then one can identify $\su(2)^*$ with $\R^3$ and the coadjoint action of $\SU(2)$ with the standard action through $\SU(2) \onto \SO(3,\R)$ on $\R^3$. Let $v\in \su(2)^*\setminus\{0\}$. Like every coadjoint orbit,  $S^2 \cong \SU(2)\cdot v$ is a Hamiltonian $\SU(2)$-manifold with moment map the inclusion $\iota:\SU(2)\cdot v \into \su(2)^*$. Consequently $M:=S^2 \times S^2$ is a Hamiltonian manifold under the diagonal action of $\SU(2)$ with moment map 
\[\mu\colon S^2 \times S^2 \to \su(2)^*, (x_1,x_2) \mapsto \iota(x_1) + \iota(x_2).\] One can check that $M$ is multiplicity free. 
For $T_{\R}$ we can take the subgroup of $K$ made up of diagonal matrices. Then $T_{R} \cong \mathrm{U}(1)$ and $\wl \cong \Z$. We can choose the identification $\su(2)^* \equiv \R^3$ in such a way that $\Lie(T_{\R})^*$ gets identified with $\R(0,0,1)$ and the weight lattice $\wl$ with $\Z(0,0,1)$. As a dominant Weyl chamber $\ft^+$ we can then choose $\R_{\ge 0}(0,0,1)$. To lighten notation, we will identify $\wl$ with $\Z$ and $\ft^+$ with $\R_{\ge 0}$. Elementary computations show that 
\[\mathcal{P}_M = \mu(M) \cap \ft^+ = [0,b],\]
where $b \in \R_{>0}$ depends on the choice of $v \in \su(2)^*$, that $L_{\R} = T_{\R}$ and that the generic isotropy group $K_M$ is equal to the center of $\SU(2)$. Consequently,
\[\wl_M = 2\Z \inn \Z \equiv \wl.\]  
We confirm that the conditions \eqref{eq:sphericity} are satisfied at the two vertices of $\mathcal{P}_M$. We begin with the vertex $0$. Since $K_0 = \SU(2)$, we have that $G(0)= \SL(2)$. Observe that $C_0 \mathcal{P} = \ft^+$. Consider the smooth affine spherical $\SL(2)$-variety
\[X_0 = \SL(2)/T\]
where $T$ is the subset of $\SL(2)$ made up of diagonal matrices. As is well known, the weight monoid of $X_0$ is $2\N \inn \N=\wl \cap \ft^+$, whence \eqref{eq:sphericity} holds at the vertex $0$. Next we consider the vertex $b$ of $\mathcal{P}_M$. Then $K_b = T_{\R}$ and consequently $G(b)=T$. Note that $C_b\mathcal{P}_M = -\ft^+$. Straightforward computations show that for the smooth affine spherical $T$-variety
\[X_b = \C \text{ with action $t\cdot v = t^{2}v$ for $t\in T, v\in X_b$}\] the conditions in \eqref{eq:sphericity} hold again. 
\end{example}

\begin{remark}
\begin{enumerate}[(a)]
\item Generalizing Example~\ref{ex:su2}, Corollary 11.4 in \cite{knop-autoHam} shows how one can recover  P.~Igl{\'e}sias's classification of multiplicity free Hamiltonian $\SU(2)$-manifolds in \cite{iglesias-so3} from Knop's theorems above.
\item Similarly, Delzant's classification of symplectic toric manifolds in \cite{delzant-abel} by Delzant polytopes is an immediate consequence of Knop's results (see \cite[Corollary 11.3]{knop-autoHam}.)  
\item In \cite[Section 11]{knop-qham-arxiv}, Knop has given examples of multiplicity free quasi-Hamiltonian manifolds $M$ and the corresponding pairs $(\mathcal{P}_M, \wl_M)$. In his thesis \cite{paulus_phd}, K.~Paulus has obtained classifications of certain subclasses of such manifolds using the results in \cite{knop-qham-arxiv}.  
\end{enumerate}
\end{remark}

For any given pair $(\mathcal{P},\wl_0)$ and vertex $a$ of $\mathcal{P}$, checking the conditions \eqref{eq:sphericity} can be reduced to a finite number of elementary verifications. Indeed, in \cite{charwm_arxivv2}  Pezzini and Van Steirteghem used the combinatorial theory of spherical varieties (always defined over $\C$ in this paper) and a smoothness criterion due to R.~Camus \cite{camus} to give a combinatorial characterization of the weight monoids of smooth affine spherical varieties.
 
We also recall that, in \cite{losev-knopconj}, Losev proved Knop's conjecture that a smooth affine spherical variety is uniquely determined by its weight monoid. Put differently, the classification of smooth affine spherical varieties is equivalent to the classification of their weight monoids. 
In this paper we apply the criterion in \cite{charwm_arxivv2} to classify the weight monoids of three families of smooth affine spherical varieties. 

More specifically, in Section \ref{sec:G_sat_full_rank} we find, for all simple groups $G$, the $G$-saturated smooth affine spherical varieties of full rank (see Definitions \ref{def:G-saturated} and \ref{def:fullrank} below). In Section \ref{sec:SL_2_times_C_star} we classify the smooth affine spherical $(\SL(2)\times \C^{\times})$--varieties. Finally, in Section~\ref{sec:woodward} we show that every so-called \emph{reflective monoid} which is ``Delzant'' is the weight monoid of a smooth affine spherical variety, see Theorem~\ref{thm:ourwoodward}. Thanks to Knop's aforementioned results on multiplicity free Hamiltonian manifolds, this yields a new proof of one of C.~Woodward's results in \cite{woodward-classif}: every reflective Delzant polytope is the moment polytope of a multiplicity free Hamiltonian manifold.

For completeness, we recall the existing classifications of smooth affine spherical varieties. In \cite{kramer}, M.~Kr\"amer classified affine spherical homogeneous spaces $G/H$ where $G$ is simple, while I.V.~ Mikityuk \cite{mikityuk} and Brion \cite{brion-sphhom} independently generalized this to arbitrary reductive $G$. Spherical modules (that is, representations of $G$ which are spherical when considered as $G$-varieties) were classified in \cite{kac-rmks, leahy, benson-ratcliff-mf}. Building on these two cases, Knop and Van Steirteghem classified all smooth affine
spherical varieties up to coverings, central tori, and $\C^{\times}$-fibrations in \cite{knop&bvs-classif}.
The weight monoids of the varieties in these classifications have been determined in \cite{kramer}, \cite{avdeev-extws}, \cite{leahy} (see also \cite[Section 5]{knop-rmks}) and \cite{knop&pezzini&bvs-wmsasv-prepar}, respectively. We remark that these are all classifications of so-called ``primitive'' smooth affine spherical varieties (see, e.g., \cite[Examples 2.3 -- 2.6]{knop&bvs-classif} for why the restriction to such varieties is necessary). This means that, while possible, obtaining the results of the present paper from these classifications would also require work. For example, the $(\SL(2)\times \C^{\times})$--varieties we classify in Section~\ref{sec:SL_2_times_C_star} (cf.~Table~\ref{table:sl2cst}) correspond to only four entries in the Tables of \cite{knop&bvs-classif}. 

\subsection*{Notation}
Throughout this paper, $G$ will be a complex connected reductive group, with a chosen Borel subgroup $B$ and maximal torus $T \inn B$. The weight lattice $\Hom(T,\C^{\times})$, identified with $\Hom(B,\C^{\times})$ as usual, will be denoted $\wl$. We will use $\rl$ for the root lattice and $\sr$ for the set of simple roots. When $\alpha \in \sr$, we will use $\alpha^{\vee}$ for the corresponding coroot, which we view as an element of $\Hom_{\Z}(\wl,\Z)$. We will use $\dw$ for the monoid of dominant weights. Recall that every weight $\lambda \in \dw$ naturally corresponds to an irreducible representation of $G$, which we will denote $V(\lambda)$. We will number the fundamental weights and the simple roots of the simple Lie algebras and of the corresponding simply connected simple algebraic group
as in \cite{bourbaki-geadl47}. All varieties are defined over $\C$ and are, by definition, irreducible. Unless stated otherwise, $X$ will denote an affine spherical $G$-variety.

When $F$ is a subset of $\wl \otimes_{\Z} \R$ and $\mathbb{A}$ is a submonoid of $(\R,+)$, like $\N, \Z, \Q$ or $\Q_{\ge 0}$, we will use the notation $\mathbb{A}F$ for the $\mathbb{A}$-span of $F$ in $\wl \otimes_{Z}\R$. For example, when $\wm \inn \wl$, then $\Z\wm$ and $\Q_{\ge 0}\wm$ are, respectively, the lattice and cone generated by $\wm$ in $\wl \otimes_{\Z} \Q$. If $F=\{f_1,f_2,\ldots,f_n\}$ is a finite set, then we will also write $\<f_1,f_2,\ldots,f_n\>_{\mathbb{A}}$ for $\mathbb{A}F$. If $\Chi$ is a lattice, then we will write $\Chi^{*}$ for the dual lattice $\Hom_{\Z}(\Chi, \Z)$. By a polytope (or
 a convex polytope) we mean the convex hull of finitely many points.

\subsection*{Acknowledgement}
The authors wish to thank Michel Brion and Baohua Fu for organizing the International Conference on Spherical Varieties at the Tsinghua Sanya International Mathematics Forum in November 2016. Van Steirteghem presented some of the results in this paper during his talk at the Conference, and is grateful for the invitation.

The authors thank Wolfgang Ruppert for several very helpful conversations about finitely generated abelian groups. They also thank the two referees for their careful reading and for useful remarks, suggestions and corrections on a previous version of this paper.

Much of the work on this project was done during visits by Van Steirteghem to the FAU in Erlangen in the summer of 2015 and the academic year 2016--2017. He would like to express his gratitude to Friedrich Knop and the Department of Mathematics at the FAU for hosting him. He received support from the City University of New York PSC-CUNY Research Award Program and from the National Science Foundation through grant DMS 1407394. He also thanks Medgar Evers College for his 2016-17 Fellowship Award. Pezzini was partially supported by the DFG Schwerpunktprogramm 1388 -- Darstellungstheorie. 

\section{Smooth affine spherical varieties} \label{sec:affine_spherical_varieties}
In this section we begin by briefly reviewing notions from the combinatorial theory of affine spherical varieties we will need. For further details and context, we refer to \cite{msfwm} and \cite{charwm_arxivv2}. In Theorem~\ref{thm:G_sat_char}, we recall from \cite{charwm_arxivv2} the characterization of $G$-saturated weight monoids of smooth affine spherical varieties. This will be the main tool we will use in Section~\ref{sec:G_sat_full_rank}. As throughout the paper, \emph{$X$ is an affine spherical $G$-variety.} As is well known, a normal affine $G$-variety $Y$ is spherical if and only if its coordinate ring $\C[Y]$ is multiplicity free as a $G$-module~\cite{vin&kim}.

\begin{definition} \label{def:weight_monoid}
Let $Y$ be an affine $G$-variety. The \textbf{weight monoid} of $Y$ is
\begin{equation}
\wm(Y) := \{\lambda \in \dw \colon \dim_{\C}\Hom^G(V(\lambda),\C[Y]) \neq 0\}.
\end{equation}
\end{definition}

Since $X$ is a normal variety, its weight monoid $\wm = \wm(X)$ is finitely generated and satisfies the following equality in $\wl \otimes_{\Z}\Q$:
\begin{equation}
\Z\wm \cap \Q_{\ge 0}\wm = \wm. \label{eq:normal_monoid}
\end{equation}
In this paper we assume all monoids to be finitely generated, and we call a submonoid $\wm$ of $\dw$ satisfying condition~\eqref{eq:normal_monoid} \textbf{normal}.

It is well-known that non-isomorphic affine spherical $G$-varieties may have the same weight monoid. For example, if $G=\SL(2)$ and $V=\<x^2, xy, y^2\>_{\C}$ is the vector space of binary forms of degree $2$ on which $G$ acts by linear change of variables, then the smooth affine spherical $G$-variety $G\cdot xy \inn V$ has the same weight monoid as the singular affine spherical $G$-variety $\overline{G\cdot x^2} \inn V$. In the 1990s, Knop conjectured that for \emph{smooth} affine spherical varieties the weight monoid is a complete invariant. This conjecture was proved by Losev: 
\begin{theorem}[{\cite{losev-knopconj}}] \label{thm:losev}
If $X_1$ and $X_2$ are smooth affine spherical $G$-varieties with $\wm(X_1) = \wm(X_2)$, then $X_1$ and $X_2$ are $G$-equivariantly isomorphic. 
\end{theorem}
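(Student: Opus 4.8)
The plan is to prove uniqueness by working inside the moduli scheme $\ms$ of affine spherical $G$-varieties with fixed weight monoid $\wm := \wm(X_1) = \wm(X_2)$, constructed by Alexeev and Brion. Recall that a point of $\ms$ is a $G$-algebra structure on the multiplicity free $G$-module $A_\wm := \bigoplus_{\lambda \in \wm} V(\lambda)$ compatible with the $\wm$-grading, that $\ms$ carries an action of the adjoint torus $\Tad$, and that two such varieties are $G$-equivariantly isomorphic if and only if the corresponding points lie in the same $\Tad$-orbit. The ``most degenerate'' structure is the horospherical product, for which $A_\wm$ becomes the coordinate ring of a horospherical variety $X_0$ depending only on $\wm$; the corresponding point $x_0$ is the unique $\Tad$-fixed point of $\ms$ and lies in the closure of every $\Tad$-orbit. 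Thus the theorem is equivalent to the assertion that all points of $\ms$ whose associated variety is smooth lie in a single $\Tad$-orbit.

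I would carry this out in three steps. First, realize $X_1$ and $X_2$ as points $x_1, x_2 \in \ms$ and exhibit, via a suitable one-parameter subgroup of $\Tad$, a flat $\Tad$-equivariant degeneration of each $X_i$ to the common horospherical variety $X_0$. Second, analyze the tangent space $T_{x_0}\ms$ at the horospherical point: it is graded by the lattice $\Z\wm$, and the directions along which $X_0$ genuinely deforms are supported on the $\Tad$-weights $-\gamma$, where $\gamma$ runs over the spherical roots admissible for $\wm$. Third, and decisively, translate smoothness of $X_i$ into a condition on $x_i$ by computing the tangent space of $X_i$ along its closed $G$-orbit; here one uses the smoothness criterion of Camus \cite{camus} underlying the combinatorial characterization recalled in \cite{charwm_arxivv2}, which forces the set of spherical roots that can actually occur to be exactly the one determined by $\wm$.

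The hard part will be the third step together with the passage from first-order information to an actual $\Tad$-orbit. Concretely, one must show that two smooth structures differ only by deformation parameters lying in the spherical-root graded pieces, on which $\Tad$ acts through independent nontrivial characters, so that $\Tad$ acts transitively on the nonzero admissible parameters; this requires controlling the higher-order structure constants of the multiplication on $A_\wm$ and ruling out obstructions, which is precisely where smoothness is indispensable. That the hypothesis cannot be dropped is visible already for $G = \SL(2)$ and $\wm = 2\N$: the smooth variety $G\cdot xy$ and the singular variety $\overline{G\cdot x^2}$ give two distinct $\Tad$-orbits over the same weight monoid, and only the smooth one is singled out by the argument above.
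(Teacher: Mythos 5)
First, a point of context: the paper does not prove this statement at all --- it is Losev's theorem, imported verbatim from \cite{losev-knopconj} --- so there is no internal proof to compare yours against, and your proposal has to stand on its own as a proof of a result whose published proof occupies a substantial paper. Your road map does point at the right circle of ideas (the Alexeev--Brion moduli scheme $\ms$, the $\Tad$-action whose orbits are the isomorphism classes, the degeneration to the horospherical point $x_0$, and the multiplicity-free tangent space $T_{x_0}\ms$ with weights among the spherical roots). But the two steps that carry all the mathematical weight are only gestured at. (i) Your third step --- that smoothness of $X_i$ forces $\Sigma^N(X_i)$ to be a specific set determined by $\wm$ alone --- \emph{is} the content of the Knop conjecture once the uniqueness property is granted, and it does not follow formally from Camus's criterion: that criterion characterizes smoothness in terms of the full Luna data of a given variety (including colors), not in terms of $\wm$, so one still has to prove that at most one admissible choice of $\Sigma^N$ is compatible with smoothness for a fixed $\wm$. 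Worse, appealing to the characterization in \cite{charwm_arxivv2} here is circular: that characterization is formulated and proved \emph{using} Losev's theorem (it speaks of ``the'' smooth affine spherical variety with weight monoid $\wm$, cf.\ Remark~\ref{rem:Gsatinvars}). (ii) The passage from ``same set of spherical roots'' to ``same $\Tad$-orbit'' is Losev's uniqueness property \cite{losev-uniqueness} (quoted in this paper as the statement that $X$ is determined by the pair $(\wm(X),\Sigma^N(X))$), itself a separate hard theorem; your sentence about ``controlling the higher-order structure constants and ruling out obstructions'' names the difficulty without resolving it. Knowing that $T_{x_0}\ms$ is multiplicity free does not by itself give a $\Tad$-equivariant identification of orbits in $\ms$ with supports in that tangent space, nor the identification of the support of the point $x_i$ with $\Sigma^N(X_i)$.

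In short, the proposal is a plausible outline of \emph{where} a proof lives, but both decisive steps are left as black boxes, and the one external input you do invoke (\cite{charwm_arxivv2}, hence implicitly \cite{losev-knopconj} itself) makes the argument circular as written. Since the paper's role for this statement is purely to cite \cite{losev-knopconj}, the honest options are either to cite Losev as the paper does, or to supply genuine proofs of (i) and (ii); the present sketch does neither.
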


This result leads to a natural question: which normal submonoids of $\dw$ are the weight monoids of smooth affine spherical varieties? 
\begin{definition} \label{def:smooth_weight_monoid}
Let $\wm$ be a submonoid of $\dw$. We will say that $\wm$ is a \textbf{smooth weight monoid} if there exists a smooth affine spherical $G$-variety $X$ such that $\wm(X) = \wm$.
\end{definition}

\begin{example}
Suppose $G=T$ is a torus. Then $\dw = \wl \cong \Z^{\dim T}$. A basic fact in the theory of toric varieties (see, e.g.\ \cite[Section 2.1]{fulton-toric}) says that a submonoid $\wm \inn \wl$ is a smooth weight monoid if and only if 
\[\wm = \<\lambda_1, -\lambda_1, \lambda_2, -\lambda_2, \ldots, \lambda_r, -\lambda_r, \lambda_{r+1}, \lambda_{r+2}, \ldots,\lambda_n\>_{\N}\]
for some collection $\lambda_1, \lambda_2, \ldots,\lambda_n$ of $\Z$-linearly independent elements of $\wl$ and some $r \in \{0,1,\ldots,n\}$. 
\end{example}

In contrast to the case where $G$ is a torus, the characterization of smooth weight monoids for general $G$ is much more complicated. In \cite[Theorem 4.2]{charwm_arxivv2}, Pezzini and Van Steirteghem gave such a combinatorial characterization. We recall a special case below in Theorem~\ref{thm:G_sat_char}. Before doing so and for completeness we quickly recall the combinatorial invariants $S^p(\wm)$, $\Sigma^N(\wm)$ and $S_{\wm}$ needed in the statement of the theorem. We omit most of the discussion of the geometric meaning of these invariants, but wish to emphasize that computing them only uses basic combinatorics of root systems and elementary calculations. 

Besides its weight monoid $\wm(Y)$, another important invariant of an affine  $G$-variety $Y$ is its set $\Sigma^N(Y)$ of N-spherical roots, which we now define. First, we recall that the \emph{root monoid} $\mathscr M_Y$ of $Y$ is the
submonoid of $\wl$ generated by 
\[\{\lambda+\mu-\nu \mid \lambda, \mu,
\nu \in \dw \text{ such that } \C[Y]_{(\nu)} \cap
(\C[Y]_{(\lambda)}\C[Y]_{(\mu)}) \neq 0\},\]
where for $\gamma \in \dw$ we used $\C[Y]_{(\gamma)}$ for the isotypic
component of type $\gamma$ in $\C[Y]$ and where
$\C[Y]_{(\lambda)}\C[Y]_{(\mu)}$ is the subspace of $\C[Y]$
spanned by the set $\{fg \colon f \in \C[Y]_{(\lambda)}, g \in
\C[Y]_{(\mu)}\}$.   By \cite[Theorem 1.3]{knop-auto}, the saturation of $\mathscr M_Y$, that is, the intersection of the cone spanned by $\mathscr M_Y$ and the group generated by $\mathscr M_Y$, is a free submonoid of $\rl$.

\begin{definition} \label{def:N_spherical_roots_variety}
Let $Y$ be an affine $G$-variety. The set $\Sigma^N(Y)$ of \textbf{N-spherical roots of $Y$} is the basis (as a monoid) of the saturation of the root monoid $\mathscr M_Y$ of $Y$. 
\end{definition}

\begin{definition} \label{def:N_adapted_spherical_roots}
Let $\wm$ be a submonoid of $\dw$ and let $\sigma \in \rl$. We say that $\sigma$ is \textbf{N-adapted} to $\wm$ if there exists an affine spherical $G$-variety $X$ such that $\Sigma^N(X) = \{\sigma\}$ and $\wm(X) = \wm$. We use $\Sigma^N(\wm)$ for the set of all $\sigma \in \rl$ that are N-adapted to $\wm$. 
\end{definition}

Corollary 2.17 of \cite{msfwm}, which we will recall in Proposition~\ref{prop:adapnsphroots_general}, gives a combinatorial description of $\Sigma^N(\wm)$, when $\wm$ is normal. In Proposition~\ref{prop:adapnsphroots} below we recall, from \cite{charwm_arxivv2}, the simpler description of $\Sigma^N(\wm)$ under the stronger assumption that $\wm$ be $G$-saturated.

\begin{remark}
In \cite[Theorem 1.2]{losev-knopconj}, Losev proved that an affine spherical $G$-variety $X$ (smooth or not) is uniquely determined (up to $G$-equivariant isomorphism) by the pair $\wm(X),\Sigma^N(X)$. In \cite{avdeev&cupit-newold-arxivv2}, R.~Avdeev and S.~Cupit-Foutou have proposed a proof that the root monoid of an affine spherical variety is free. 
\end{remark}

\begin{definition} \label{def:G-saturated}
Let  $\wm$ be a submonoid of the monoid $\dw$ of dominant weights of $G$.
We say that $\wm$ is \textbf{$G$-saturated} if the following
equality holds in $\wl$: 
\begin{equation}
\Z\wm \cap \dw = \wm. \label{eq:Gsat}
\end{equation}
We will also call an affine spherical $G$-variety $G$-saturated if its weight monoid is $G$-saturated. 
\end{definition}

\begin{remark}
Observe that by \eqref{eq:Gsat}, the lattice $\Z\wm$ determines $\wm$ when $\wm$ is $G$-saturated.
\end{remark}

The following characterization of $G$-saturated affine spherical varieties, due to Luna, will be useful later.

\begin{proposition}[{\cite[Proposition~2.31 and Remark~2.32]{charwm_arxivv2}}]\label{prop:G-saturatedness}
An affine spherical $G$-variety $X$ is $G$-saturated if and only if $\Sigma^N(X)$ does not contain any simple root and $X$ has no $G$-stable prime divisor.
\end{proposition}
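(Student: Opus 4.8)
The plan is to reformulate $G$-saturation as a condition on the cone dual to $\Q_{\ge 0}\wm$, and then match the two resulting conditions with the combinatorial invariants of the open $G$-orbit $G/H$ of $X$. Write $\wl=\Z\wm$ for the weight lattice of $G/H$ and $\wl^{*}=\Hom_{\Z}(\wl,\Z)$ for its dual, let $\col$ be the set of colors of $G/H$ with $\rho(D)\in\wl^{*}$ the usual image of $D\in\col$, and recall that each $G$-stable prime divisor $E$ of $X$ determines a $G$-invariant valuation $v_E\in\wl^{*}\otimes_\Z\Q$. First I would record the standard divisor description of the weight monoid: since $X$ is normal and affine, a $B$-eigenvector $f_\lambda\in\C(X)$ of weight $\lambda\in\wl\cap\dw$ is regular precisely when it has no pole along any prime divisor of $X$, and the prime divisors are exactly the colors and the $G$-stable prime divisors. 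Hence
\[
\wm=\{\lambda\in\wl\cap\dw : \la\rho(D),\lambda\ra\ge 0\ \text{for all}\ D\in\col,\ \la v_E,\lambda\ra\ge 0\ \text{for all}\ E\}.
\]

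Because $\wm$ is normal, $\Q_{\ge 0}\wm$ is the intersection of the dominant cone $C=\{x\in\wl\otimes_\Z\Q : \la\a^\vee,x\ra\ge 0\ \forall\,\a\in\sr\}$ with the half-spaces defined by the $\rho(D)$ and the $v_E$. Using that $\Z\wm=\wl$ spans $\wl\otimes_\Z\Q$, the monoid $\wm$ is $G$-saturated if and only if $\Q_{\ge 0}\wm=C$, which by duality holds if and only if every $\rho(D)$ and every $v_E$ lies in the coroot cone $\mathcal R:=\la\a^\vee|_{\wl}:\a\in\sr\ra_{\Q_{\ge 0}}\subseteq \wl^{*}\otimes_\Z\Q$. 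The proof then rests on two claims, which I will call the dictionary: (i) for a simple root $\a$, the colors moved by $\a$ all have $\rho$-image in $\mathcal R$ if and only if $\a\notin\Sigma^N(X)$; and (ii) the valuation $v_E$ of any $G$-stable prime divisor never lies in $\mathcal R$.

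Granting the dictionary, both implications are short. If $\Sigma^N(X)$ contains no simple root and $X$ has no $G$-stable prime divisor, then by (i) all $\rho(D)\in\mathcal R$ and there are no $v_E$, so $\Q_{\ge 0}\wm=C$ and $X$ is $G$-saturated. Conversely, a simple root $\a\in\Sigma^N(X)$ produces, by (i), a color $D$ with $\rho(D)\notin\mathcal R$, and a $G$-stable prime divisor produces, by (ii), a $v_E\notin\mathcal R$; in either case $\Q_{\ge 0}\wm\subsetneq C$, so $X$ is not $G$-saturated. To prove (i) I would analyze the colors via Luna's classification and relate $\rho(D)\notin\mathcal R$ to a drop by $\a$ in a product of $B$-eigenvectors, that is, to $\a$ belonging to the root monoid $\mathscr M_X$ and hence to $\Sigma^N(X)$; for (ii) I would use that $v_E$ lies in the valuation cone $\V$ on an extremal ray not coming from a color, and that such valuations pair strictly negatively with some dominant direction, which excludes them from $\mathcal R$.

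The hard part will be the dictionary, and especially the $\a$-versus-$2\a$ normalization hidden in (i). It is not enough to know whether $\a$ is a spherical root of $X$: for instance $\SL(2)/T$ has $\a$ as its (Luna) spherical root but $2\a$ as its unique N-spherical root, its two colors satisfy $\rho(D)\in\mathcal R$, and it is indeed $G$-saturated. Thus the whole argument must be run through the precise description of $\Sigma^N(X)$ in terms of the root monoid rather than through the spherical roots, using the combinatorial machinery of \cite{charwm_arxivv2}; pinning down exactly which color configurations force a \emph{simple} root into $\Sigma^N(X)$, and separately establishing the valuation statement (ii), is where the real work lies.
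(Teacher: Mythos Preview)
The paper does not give its own proof of this proposition: it is stated with the attribution ``due to Luna'' and cited verbatim from \cite[Proposition~2.31 and Remark~2.32]{charwm_arxivv2}, with no proof environment following. So there is nothing in the present paper to compare your attempt against.

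That said, your outline is a reasonable strategy and the reformulation ``$X$ is $G$-saturated if and only if every $\rho(D)$ and every $v_E$ lies in the coroot cone $\mathcal R$'' is correct. Your identification of claim~(i) as the crux, and your observation that it hinges on the $\alpha$-versus-$2\alpha$ normalization of N-spherical roots (illustrated by $\SL(2)/T$), is exactly right; settling it does require Luna's classification of colors by type~(a), (2a), (b), together with the criterion distinguishing $\alpha\in\Sigma^N(X)$ from $2\alpha\in\Sigma^N(X)$ when $\alpha\in\Sigma(X)$.

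There is, however, a genuine gap in your sketch of claim~(ii). Your proposed justification---that $v_E$ ``pairs strictly negatively with some dominant direction, which excludes it from $\mathcal R$''---is circular: saying $v_E\notin\mathcal R$ is \emph{equivalent} to the existence of a dominant $\lambda$ with $\langle v_E,\lambda\rangle<0$, so this restates the claim rather than proving it. A correct argument runs as follows: $v_E$ lies on an extremal ray of $\wm^\vee$, and every $\alpha^\vee|_{\Z\wm}$ lies in $\wm^\vee$; hence if $v_E\in\mathcal R$ then by extremality $v_E$ is a positive multiple of a single $\alpha^\vee|_{\Z\wm}$. Now either $\alpha$ (or $2\alpha$) is a spherical root, in which case $\langle\alpha^\vee,\alpha\rangle=2>0$ forces $\alpha^\vee|_{\Z\wm}\notin\mathcal V$, contradicting $v_E\in\mathcal V$; or $\alpha$ is of type~(b) or~(2a), in which case the ray through $\alpha^\vee|_{\Z\wm}$ already carries the color $D_\alpha$, contradicting the fact that $G$-stable divisors correspond to rays \emph{not} coming from colors; or $\alpha\in S^p(\wm)$, in which case $\alpha^\vee|_{\Z\wm}=0$. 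This closes~(ii), but note that it too relies on the color classification, so claims~(i) and~(ii) are not as independent as your outline suggests.
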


\begin{definition} \label{def:scspherrootsG}
Let $\sigma$ be an element of the root lattice $\rl$ of $G$ and let $\sigma = \sum_{\alpha \in \sr} n_\alpha \alpha$ be its unique expression as a linear combination of the simple roots. The \textbf{support} of $\sigma$ is
\(\mathrm{supp}(\sigma)=\{\alpha \in \sr \colon n_\alpha \neq 0\}.\)
The \textbf{type} of $\mathrm{supp}(\sigma)$ is the Dynkin type of the root subsystem generated by $\mathrm{supp}(\sigma)$ in the root system of $G$. The \textbf{set $\Sigma^{sc}(G)$ of spherically closed spherical roots of $G$} is the subset of  $\N \sr$ defined as follows: an element $\sigma$ of $\N\sr$ belongs to $\Sigma^{sc}(G)$ if $\sigma$ is listed in Table~\ref{table:scspher}, where the numbering of the simple roots in $\mathrm{supp}(\sigma)$ is as in \cite{bourbaki-geadl47}.      
\end{definition}

\begin{table}\caption{spherically closed spherical roots} \label{table:scspher}
\begin{center}
\begin{tabular}{ll}
Type of $\supp(\sigma)$ & $\sigma$ \\
\hline
$\sf A_1$ & $\alpha_1$\\
$\sf A_1$ & $2\alpha_1$\\
$\mathsf A_1 \times \mathsf A_1$ & $\alpha_1+\alpha_1'$\\
$\mathsf A_n$, $n\geq 2$ & $\alpha_1+\ldots+\alpha_n$\\
$\mathsf A_3$ & $\alpha_1+2\alpha_2+\alpha_3$\\
$\mathsf B_n$, $n\geq 2$ & $\alpha_1+\ldots+\alpha_n$\\
                     & $2(\alpha_1+\ldots+\alpha_n)$\\
$\mathsf B_3$ & $\alpha_1+2\alpha_2+3\alpha_3$\\
$\mathsf C_n$, $n\geq 3$ & $\alpha_1+2(\alpha_2+\ldots+\alpha_{n-1})+\alpha_n$\\
$\mathsf D_n$, $n\geq 4$ & $2(\alpha_1+\ldots+\alpha_{n-2})+\alpha_{n-1}+\alpha_n$\\
$\mathsf F_4$ & $\alpha_1+2\alpha_2+3\alpha_3+2\alpha_4$\\
$\mathsf G_2$ & $4\alpha_1+2\alpha_2$\\          
& $\alpha_1+\alpha_2$
\end{tabular}
\end{center}
\end{table}

\begin{definition} \label{def:Sp}
Let $\wm$ be a set of dominant weights of $G$, that is $\wm \subset
\dw$. Then we define
\[S^p(\wm):= \{\alpha \in S \colon \<\alpha^{\vee},\lambda\> = 0
\text{ for all $\lambda \in \wm$} \}. \]
\end{definition}
 
\begin{proposition}[{\cite[Prop.~1.6]{charwm_arxivv2}}] \label{prop:adapnsphroots}
Suppose $\wm$ is a $G$-saturated submonoid of $\dw$. The set $\SNWM$ consists of all $\sigma \in \Sigma^{sc}(G)$ that satisfy all the following conditions:
\begin{enumerate}[(i)]
\item $\sigma$ is not a simple root; \label{item:osignotsimple}
\item $\sigma \in \Z\wm$; \label{item:osiginlat}
\item $\sigma$ is compatible with $S^p(\wm)$, that is: \label{compat}
\begin{itemize}
\item[-] if $\sigma=\alpha_1+\ldots+\alpha_n$ with support of type $\mathsf{B}_n$ then
$\{\alpha_2, \alpha_3, \ldots, \alpha_{n-1}\} \inn S^p(\wm)$ and $\alpha_n \notin S^p(\wm)$;
\item[-] if $\sigma=\alpha_1+2(\alpha_2+\ldots+\alpha_{n-1})+\alpha_n$ with support of type $\mathsf{C}_n$ then
$\{\alpha_3, \alpha_4, \ldots, \alpha_n\}
\inn S^p(\wm)$; 
\item[-] if $\sigma$ is any other element of $\Sigma^{sc}(G)$ then 
\(\{\alpha \in \supp(\sigma)\colon \<\alpha^{\vee}, \sigma\> =0\}
\inn S^p(\wm)\);
\end{itemize}
\item if $\sigma = 2 \alpha \in 2\sr$  
then $\<\alpha^{\vee}, \gamma\> \in 2\Z$ for all $\gamma \in \Z\wm$; \label{item:corooteven}
\item if $\sigma = \alpha+\beta$ with $\alpha, \beta \in S$ and $\alpha \perp \beta$, then $\<\alpha^{\vee}, \gamma\> = \<\beta^{\vee}, \gamma\>$ for all $\gamma \in \Z\wm$. \label{item:osigorthosum}
\end{enumerate} 
\end{proposition}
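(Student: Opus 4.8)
The plan is to deduce Proposition~\ref{prop:adapnsphroots} from the general combinatorial description of $\SNWM$ valid for an arbitrary normal monoid, namely Proposition~\ref{prop:adapnsphroots_general} (Corollary~2.17 of \cite{msfwm}), by specializing to the $G$-saturated case. Concretely I would prove both inclusions: that each $\sigma \in \SNWM$ satisfies (i)--(v), and conversely that any $\sigma \in \Sigma^{sc}(G)$ meeting (i)--(v) is N-adapted to $\wm$. The reason to expect a clean specialization is that the hypothesis \eqref{eq:Gsat}, i.e.\ $\Z\wm \cap \dw = \wm$, lets one replace every monoid-level requirement of the general criterion by a condition on the lattice $\Z\wm$ alone.

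I would first dispatch condition (i). If $\sigma \in \SNWM$, then by definition there is an affine spherical $G$-variety $X$ with $\wm(X) = \wm$ and $\Sigma^N(X) = \{\sigma\}$; since $\wm$ is $G$-saturated, $X$ is a $G$-saturated variety, and Proposition~\ref{prop:G-saturatedness} then forces $\Sigma^N(X)$ to contain no simple root. Hence $\sigma$ is not a simple root. This is exactly the step where the $G$-saturation hypothesis discards the simple-root entry $\alpha_1$ of Table~\ref{table:scspher} and lets the analysis continue with the remaining spherically closed spherical roots only.

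Next I would match the remaining conditions against the general criterion. Condition (ii) ($\sigma\in\Z\wm$) and the $S^p(\wm)$-compatibility (iii) are common to the normal and the $G$-saturated settings; I would verify (iii) by running through the shapes listed in Table~\ref{table:scspher}, the special prescriptions in types $\mathsf{B}_n$ and $\mathsf{C}_n$ reflecting the asymmetric supports of those roots. The parity condition (iv) on $\sigma=2\alpha$ and the balancing condition (v) on $\sigma=\alpha+\beta$ with $\alpha\perp\beta$ are lattice conditions: whether one imposes them for all $\gamma\in\wm$ or for all $\gamma\in\Z\wm$ amounts to the same thing, by $\Z$-linearity of the coroot pairings, so they transfer unchanged from the general criterion. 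The genuine effect of \eqref{eq:Gsat} appears instead in those clauses of the general criterion that test membership of auxiliary weights in the monoid $\wm$ itself: under $G$-saturation such a test reduces to dominance of the relevant lattice element, which holds automatically, so the clause becomes vacuous.

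The main obstacle I anticipate is not the routine case analysis behind (iii), but the careful verification that, once \eqref{eq:Gsat} holds, the lattice conditions (iv) and (v) are \emph{sufficient}, and not merely necessary, to promote a spherically closed spherical root to an element of $\SNWM$. These two conditions are precisely the places where a root of Table~\ref{table:scspher} can fail to be N-adapted to $\wm$: because $\Z\wm$ is too coarse in the type $\mathsf{A}_1$ case $\sigma=2\alpha$, or fails to identify the two orthogonal simple directions in the type $\mathsf{A}_1\times\mathsf{A}_1$ case $\sigma=\alpha+\beta$. I would therefore need to confirm that the general criterion carries no further, monoid-specific obstruction surviving the $G$-saturation hypothesis, so that the collapse of its subcases yields exactly (i)--(v). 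Establishing this collapse is where the real content of the specialization lies.
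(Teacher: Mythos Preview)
The paper does not give its own proof of this proposition; it is simply quoted from \cite[Prop.~1.6]{charwm_arxivv2}. Your plan to deduce it from the general criterion, Proposition~\ref{prop:adapnsphroots_general}, is natural and essentially correct, but your account of how the extra clauses collapse under $G$-saturation is imprecise and misses the key mechanism.

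The condition you do not handle correctly is~(\ref{item:inasr3}) of Proposition~\ref{prop:adapnsphroots_general}. It is not a ``membership of auxiliary weights in $\wm$'' test as you describe; it is a structural constraint on $E(\wm)$. The point is that when $\wm$ is $G$-saturated, the cone $\Q_{\ge 0}\wm$ is the intersection of $\Q\wm$ with the dominant Weyl chamber, so $\wm^{\vee}$ is generated by the restrictions $\alpha^{\vee}|_{\Z\wm}$ with $\alpha\in S$, and hence every $\delta\in E(\wm)$ is already a positive rational multiple of some $\beta^{\vee}|_{\Z\wm}$ with $\beta\in S\setminus S^p(\wm)$ (this fact is used elsewhere in the paper, cf.\ Remark~\ref{rem:reflnocorootwall}). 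That is why condition~(\ref{item:inasr3}) becomes vacuous. Condition~(\ref{item:inasr4}) is irrelevant because simple roots are excluded by (i), which you correctly derive from Proposition~\ref{prop:G-saturatedness}. Once you make this correction, the matching of (ii)--(v) with (\ref{item:inasr1}), (\ref{item:inasr2}), (\ref{item:inasr5}), (\ref{nsorths}) is routine and your derivation goes through.
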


\begin{proposition}[{\cite[Prop.~1.7]{charwm_arxivv2}}] \label{prop:localizroots} Let $\wm$ be a $G$-saturated submonoid of $\dw$. 
Among all the subsets $F$ of $S$ such that the relative interior of the cone spanned by $\{\alpha^{\vee}|_{\Z\wm} \colon \alpha \in F\}$ in $\Hom_{\Z}(\Z\wm,\Q)$ intersects the cone $$\{\nu \in \Hom_{\Z}(\Z\wm,\Q) \colon \<\nu,\sigma\> \leq 0
\text{ for all } \sigma \in \Sigma^N(\wm)\}$$ 
there is a unique one, denoted $S_{\wm}$, that contains all the others.  
\end{proposition}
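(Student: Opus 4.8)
The plan is to reformulate the statement as an assertion about a finite family of subsets of $S$ and to reduce it to two elementary properties of that family. Write $N := \Hom_\Z(\Z\wm, \Q)$, and for $F \subseteq S$ let $C_F$ be the cone spanned by $\{\alpha^\vee|_{\Z\wm} \colon \alpha \in F\}$ in $N$; let $\mathcal{C} := \{\nu \in N \colon \<\nu, \sigma\> \le 0 \text{ for all } \sigma \in \Sigma^N(\wm)\}$. Set
\[\mathcal{F} := \{F \subseteq S \colon \operatorname{relint}(C_F) \cap \mathcal{C} \neq \emptyset\}.\]
I claim it suffices to show that $\mathcal{F}$ is nonempty and closed under taking unions. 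Indeed, since $S$ is finite, $\mathcal{F}$ is a finite family, so if it is closed under pairwise unions then, by iterating, the union $S_\wm$ of all its members is again a member and by construction contains every other member; uniqueness of such a maximal member is then automatic.

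Nonemptiness is immediate: for $F = \emptyset$ the cone $C_\emptyset$ equals $\{0\}$, its relative interior is $\{0\}$, and $0 \in \mathcal{C}$ because $\<0, \sigma\> = 0 \le 0$ for every $\sigma$. For closure under unions the key tool is the standard description of the relative interior of a finitely generated convex cone: if $C$ is spanned by vectors $v_1, \ldots, v_k$, then $\operatorname{relint}(C)$ is exactly the set of combinations $\sum_i c_i v_i$ with all $c_i > 0$. Granting this, take $F_1, F_2 \in \mathcal{F}$ with witnesses $\nu_j \in \operatorname{relint}(C_{F_j}) \cap \mathcal{C}$, and write each $\nu_j$ as a combination of the restricted coroots indexed by $F_j$ with strictly positive coefficients. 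The sum $\nu_1 + \nu_2$ is then a combination of the coroots indexed by $F_1 \cup F_2$ in which every coefficient is strictly positive (coefficients of coroots lying in both $F_1$ and $F_2$ add, while those lying in exactly one survive), so $\nu_1 + \nu_2 \in \operatorname{relint}(C_{F_1 \cup F_2})$. Since $\mathcal{C}$ is cut out by homogeneous inequalities it is closed under addition, whence $\nu_1 + \nu_2 \in \mathcal{C}$ as well. Therefore $F_1 \cup F_2 \in \mathcal{F}$, and the closure property follows.

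Once these two properties are in place the proposition follows as explained above. The only point that is not purely formal is the relative-interior description, which I would quote from a standard reference on convexity or establish directly; the feature requiring attention is that the restricted coroots $\alpha^\vee|_{\Z\wm}$ need not be linearly independent in $N$, so one must invoke the general (dependent-generator) form of the statement rather than an argument via barycentric coordinates. Everything else, namely nonemptiness via the empty set, closure via summing the two witness functionals, and the extraction of the maximal $S_\wm$ from finiteness, is routine.
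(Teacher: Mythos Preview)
The paper does not give its own proof of this proposition: it is quoted verbatim from \cite[Prop.~1.7]{charwm_arxivv2} and used as a black box, so there is nothing in the present paper to compare your argument against.

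Your argument is correct. The only substantive step is the characterization of the relative interior of a finitely generated cone $C=\operatorname{cone}(v_1,\ldots,v_k)$ as the set of combinations $\sum c_i v_i$ with all $c_i>0$, valid even when the $v_i$ are linearly dependent; you correctly flag this and propose to cite it. With that in hand, nonemptiness via $F=\emptyset$, closure of $\mathcal{F}$ under unions by adding the two witness functionals, and extraction of the unique maximal member by finiteness of $S$ all go through exactly as you say. One minor remark: for your purposes you actually use both directions of the relative-interior description (the inclusion $\supseteq$ to place $\nu_1+\nu_2$ in $\operatorname{relint}(C_{F_1\cup F_2})$, and the inclusion $\subseteq$ to write each $\nu_j$ with strictly positive coefficients in the first place), so be sure the reference you cite covers both.
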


\begin{definition} \label{def:admissible}
Let $\sr$ be the set of simple roots of a root system. Let $\sr^p$ be a subset of $\sr$.  Let $\Sigma^N$ be a subset of $\N\sr$. We say that the triple $(\sr, \sr^p, \Sigma^N)$ is \textbf{admissible} if there exists a finite set $I$ and for every $i \in I$ a triple $(\sr_i, \sr^p_i, \Sigma_i)$ from List~\ref{list:pat} below and an automorphism $f_i$ of the Dynkin diagram of $\sr_i$ such that the Dynkin diagram of $S$ is the disjoint union over $i\in I$ of the Dynkin diagrams of the $\sr_i$, that $\sr^p = \cup_if_i(\sr^p_i)$ and that $\Sigma^N = \cup_i f_i(\Sigma_i)$. 
\end{definition}

\begin{List}[Primitive admissible triples] \label{list:pat}
\par
\begin{enumerate}[1.]
\item $(\sr,\sr,\emptyset)$ where $\sr$ is the set of simple roots
  of an irreducible root system; \label{item:admiss:triv}
\item $(\sA_n, \{\alpha_2,\alpha_3, \ldots,\alpha_{n}\}, \emptyset)$
  for $n \ge 1$; \label{item:admiss:An_standardmod}
\item $(\sA_n, \{\alpha_1, \alpha_3, \alpha_5, \ldots, \alpha_{n-1}\},\{\alpha_1+2\alpha_2+\alpha_3, \alpha_3+ 2\alpha_4 + \alpha_5, \ldots, \alpha_{n-3}+2\alpha_{n-2}+\alpha_{n-1}\})$ for $n \ge 4$, $n$ even; \label{item:admiss:An_even}
\item $(\sA_n \times \sA_k, \{\alpha_{k+2}, \alpha_{k+3}, \ldots, \alpha_n\}, \{\alpha_1+\alpha_1', \alpha_2+\alpha_2', \ldots, \alpha_k + \alpha_k'\})$ for $n > k \ge 2$;\label{item:admiss:AnAk}
\item $(\sC_n, \{\alpha_2,\alpha_3, \ldots,\alpha_{n}\}, \emptyset)$  for $n \ge 2$; \label{item:admiss:Cn}
\item $(\sD_5, \{\alpha_2, \alpha_3, \alpha_4\}, \{\alpha_2 + 2\alpha_3 + \alpha_4 + 2\alpha_5\})$. \label{item:admiss:D5}
\end{enumerate}
\end{List}
For example, $(\sA_2 \times \sD_5,\{\alpha_1, \alpha_2',\alpha_3',\alpha_5'\},\{\alpha_2'+2\alpha_3'+2\alpha_4'+\alpha_5'\})$ is an admissible triple.

\begin{remark} \label{rem:emptytriple}
\begin{enumerate}[(a)]
\item Note that we allow $I=\emptyset$ in
Definition~\ref{def:admissible}: the triple $(\emptyset, \emptyset, \emptyset)$ is
admissible.
\item For $n=1$, the triple (\ref{item:admiss:An_standardmod}.) in List~\ref{list:pat} is $(\sA_1, \emptyset,\emptyset)$.   
\end{enumerate}
\end{remark}

\begin{theorem}[{\cite[Theorem 1.12]{charwm_arxivv2}}] \label{thm:G_sat_char}
If $\wm$ is a $G$-saturated monoid of dominant weights of
$G$, then $\wm$ is the weight monoid of a smooth affine spherical $G$-variety if and only if 
\begin{enumerate}[(a)]
\item $\{\alpha^{\vee}|_{\Z\wm}\colon \alpha \in S_{\wm}\setminus S^p(\wm)\}$ is a
  subset of a basis of $(\Z\wm)^*$; and \label{cond:partofbasis}
\item for all $\alpha,\beta \in S_{\wm}\setminus S^p(\wm)$ such that $\alpha\neq\beta$ and $\alpha^{\vee}|_{\Z\wm} = \beta^{\vee}|_{\Z\wm}$ we have $\alpha+\beta \in \Z\wm$; and \label{cond:sumisNsphericalroot}
\item the triple $(S_{\wm},S^p(\wm), \Sigma^N(\wm) \cap \Z S_{\wm})$ is admissible. \label{cond:trip_admissible}
\end{enumerate}
\end{theorem}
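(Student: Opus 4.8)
The plan is to prove both implications by reducing the smoothness of an affine spherical $G$-variety to a local combinatorial condition at a closed orbit, and then matching that condition against the primitive admissible triples of List~\ref{list:pat}. The central external tool is R.~Camus's smoothness criterion: an affine spherical variety is smooth if and only if, near a point of a closed $G$-orbit, it is formally isomorphic to a spherical module for a suitable Levi subgroup; combinatorially this turns smoothness into a statement about the restricted coroots $\alpha^{\vee}|_{\Z\wm}$ and about the ``shape'' of the associated spherical system. Because $\wm$ is $G$-saturated, Proposition~\ref{prop:G-saturatedness} guarantees that any affine spherical $X$ with $\wm(X)=\wm$ has no $G$-stable prime divisor and that $\Sigma^N(X)$ contains no simple root, so all the relevant data lives in the open orbit together with the lattice $\Z\wm$ and the N-spherical roots; this is what makes a clean combinatorial characterization possible.

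For the forward direction I would start from a smooth $X$ with $\wm(X)=\wm$ and read off its invariants. The subset $S_{\wm}$ of Proposition~\ref{prop:localizroots} should be exactly the set of simple roots of the Levi subgroup $L$ governing the local model at the closed orbit, with $S^p(\wm)$ recording the simple roots that act trivially on the slice. Camus's criterion then presents the slice as a spherical $L$-module whose nonzero weights restrict to the functionals $\alpha^{\vee}|_{\Z\wm}$ with $\alpha \in S_{\wm}\setminus S^p(\wm)$. Smoothness of such a module forces these functionals, as a set, to extend to a $\Z$-basis of $(\Z\wm)^{*}$ (the unimodularity already familiar from the toric case), which is condition~(a); and whenever two distinct simple roots yield the same functional the module must contain an $\mathsf A_1\times\mathsf A_1$ factor, forcing $\alpha+\beta$ to be an N-spherical root and hence to lie in $\Z\wm$, which is condition~(b). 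Decomposing the slice into indecomposable spherical modules and comparing with Camus's list then shows that $(S_{\wm},S^p(\wm),\SNWM\cap\Z S_{\wm})$ is a disjoint union of entries of List~\ref{list:pat}, i.e.\ admissible, which is condition~(c).

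For the converse I would construct the variety. Assuming (a)--(c), I would use admissibility to assemble from the primitive triples a candidate homogeneous spherical datum with lattice $\Z\wm$, spherical roots $\SNWM\cap\Z S_{\wm}$, and colors prescribed by $S^p(\wm)$; conditions (a) and (b) are precisely what is needed for this datum to satisfy the axioms of Luna's theory of spherical systems (in particular that the coroots pair correctly with the spherical roots and the lattice, and that coinciding colors are absorbed into an $\mathsf A_1\times\mathsf A_1$ root). That theory realizes the datum by an actual spherical homogeneous space $G/H$. I would then pass to the affine embedding of $G/H$ whose weight monoid is $\wm$, which exists by normality of $\wm$ and is unique by Losev's Theorem~\ref{thm:losev}, and verify its smoothness through Camus's criterion: by construction the slice at the closed orbit is the product of the spherical modules attached to the primitive triples, each of which is one of Camus's smooth building blocks, and condition~(a) supplies the unimodularity that lets these glue into a smooth variety.

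The hardest part will be the converse, and within it the faithful translation between the abstract spherical datum and Camus's criterion. The delicate point is that it is the localization $S_{\wm}$, and not all of $\sr$, that controls the local model, so one must show that simple roots outside $S_{\wm}$ contribute nothing to the slice at the closed orbit and therefore impose no smoothness constraint; isolating the correct Levi $L$ and proving this is where the genuine content lies. Verifying that conditions (a) and (b) are exactly the integrality and unimodularity needed for the primitive modules to assemble without introducing singularities, rather than merely necessary, is equally subtle. By comparison the admissibility condition (c) is largely bookkeeping once the dictionary between List~\ref{list:pat} and Camus's indecomposable smooth building blocks is in place, but setting up that dictionary --- tracking the diagram automorphisms $f_i$ and handling the orthogonal-sum case of condition~(\ref{item:osigorthosum}) in Proposition~\ref{prop:adapnsphroots} --- requires a careful case-by-case comparison.
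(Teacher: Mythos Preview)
This theorem is not proved in the present paper: it is quoted verbatim from \cite[Theorem~1.12]{charwm_arxivv2} and used as a black box throughout Sections~\ref{sec:affine_spherical_varieties} and~\ref{sec:G_sat_full_rank}. There is therefore no proof here to compare your proposal against. Your sketch --- reducing to a local model via Camus's criterion, identifying $S_{\wm}$ with the simple roots of the relevant Levi, and matching the slice to a product of primitive spherical modules from List~\ref{list:pat} --- is a plausible outline of the strategy in the cited reference, and indeed the paper says as much just before Definition~\ref{def:Sp} (``Pezzini and Van Steirteghem used the combinatorial theory of spherical varieties \ldots\ and a smoothness criterion due to R.~Camus''). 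But whether the details match, in particular your account of how conditions~(a) and~(b) arise and how the converse is built, cannot be assessed from this paper; you would need to consult \cite{charwm_arxivv2} directly.
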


\begin{remark} \label{rem:Gsatinvars}
As explained in \cite{charwm_arxivv2}, when $\wm$ is $G$-saturated, $S^p(\wm), \Sigma^N(\wm), S_{\wm}$ and $(S_{\wm},S^p(\wm), \Sigma^N(\wm) \cap \Z S_{\wm})$ are standard invariants from the theory of spherical varieties of the ``most generic'' affine spherical $G$-variety with weight monoid $\wm$ (see Section~2 of \loccit\ for details). In particular, if $\wm$ is a $G$-saturated weight monoid and $X$ is a smooth affine spherical $G$-variety with $\wm(X)=\wm$, then $\Sigma^N(\wm) = \Sigma^N(X)$. 
\end{remark}

The following well-known consequence of the \emph{Elementary Divisors Theorem} (see, e.g.\ \cite[Theorem 5.2, p.234]{lang_algebra_second_ed}) is useful in verifying condition~(\ref{cond:partofbasis}) of Theorem~\ref{thm:G_sat_char}.

\begin{lemma}\label{lem:part_of_basis}
Let $k,l \in \N$ with $k\geq l\geq 1$ and $A \in \operatorname{Mat}_{k\times l}(\Z)$. The following are equivalent:
\begin{enumerate}[(1)]
\item There exists a matrix $C \in \operatorname{Mat}_{k\times (k-l)}$ such that the $k\times k$ block matrix $(A|C)$ is invertible over $\Z$;
\item The greatest common divisor of all $l \times l$ minors in $A$ is equal to $1$. 
\end{enumerate}
\end{lemma}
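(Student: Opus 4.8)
The plan is to prove the equivalence of the two conditions on the matrix $A$ using the Smith normal form, which is the standard computational incarnation of the Elementary Divisors Theorem. The essential observation is that both conditions are invariant under left- and right-multiplication by matrices invertible over $\Z$, so I can reduce to the case where $A$ is in Smith normal form and check the statement directly there.

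First I would recall that, by the Elementary Divisors Theorem, there exist matrices $U \in \GL_k(\Z)$ and $V \in \GL_l(\Z)$ such that $UAV = D$, where $D \in \operatorname{Mat}_{k\times l}(\Z)$ has the diagonal entries $d_1, d_2, \ldots, d_l$ (the invariant factors, with $d_1 \mid d_2 \mid \cdots \mid d_l$) in its top $l\times l$ block and zeros elsewhere. I would then verify that each of the two conditions is preserved by this transformation. For condition~(2), this follows from the Cauchy–Binet formula: the gcd of the $l\times l$ minors of $A$ equals the gcd of the $l\times l$ minors of $D$, because multiplying by $U$ and $V$ corresponds to $\Z$-linear combinations of minors, and the inverse transformation recovers the minors of $A$ from those of $D$; hence the gcd is unchanged. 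For condition~(1), if $(A|C)$ is invertible over $\Z$ then so is $U(A|C)\begin{psmallmatrix} V & 0 \\ 0 & I \end{psmallmatrix} = (D \mid UC)$, and conversely, so the extendability of $A$ to a $\Z$-invertible square matrix is equivalent to that of $D$.

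Having reduced to $A = D$ in Smith normal form, the equivalence becomes transparent. The only nonzero $l\times l$ minor of $D$ is the product $d_1 d_2 \cdots d_l$ (the determinant of the top block), so condition~(2) says precisely that $d_1 d_2 \cdots d_l = 1$, i.e.\ every $d_i = \pm 1$, which forces each $d_i$ to be a unit. On the other hand, the columns of $D$ are $d_i$ times the standard basis vectors $e_i$ for $1 \le i \le l$; these can be completed to a $\Z$-basis of $\Z^k$ (equivalently, $D$ can be extended by the columns $C = (e_{l+1} \mid \cdots \mid e_k)$ to a $\Z$-invertible matrix) if and only if each $d_i$ is a unit, since the $i$-th column $d_i e_i$ is part of a $\Z$-basis exactly when $d_i = \pm 1$. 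Thus condition~(1) and condition~(2) are both equivalent to $d_i = \pm 1$ for all $i$, completing the proof.

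I do not expect any serious obstacle here, as this is a well-known consequence of standard lattice theory; the only point requiring a little care is the invariance of condition~(2) under the change of basis, where one must invoke Cauchy–Binet (or the observation that row and column operations over $\Z$ send the set of $l\times l$ minors to $\Z$-linear combinations in a reversible way) to see that the gcd of the maximal minors is a genuine invariant. Everything else is a direct inspection of the diagonal form.
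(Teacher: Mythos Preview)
Your proof is correct and follows exactly the route the paper indicates: the paper does not write out a proof but simply states the lemma as a ``well-known consequence of the Elementary Divisors Theorem,'' and your argument via Smith normal form is precisely that deduction spelled out. Nothing more is needed.
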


We will use Theorem~\ref{thm:G_sat_char} in Section~\ref{sec:G_sat_full_rank} to determine the smooth weight monoids that are $G$-saturated and have full rank, where $G$ is a simple algebraic group. In his thesis \cite{wgkim-phd}, W.G.~Kim implemented Theorem~\ref{thm:G_sat_char} as an algorithm in \emph{Sage} \cite{sagemath2016} for the case where $G=\SL(n)$ and $\wm$ is free. He also used the Theorem to find the smooth weight monoids for $G=\SL(n)$ generated by fundamental weights, cf.\ \cite[Theorem 4.1]{wgkim-phd}.

\section{\texorpdfstring{$G$}{G}-saturated smooth affine spherical varieties of full rank} \label{sec:G_sat_full_rank}

In this Section we generalize \cite[Theorem 1.15]{charwm_arxivv2}, which states: \emph{if $G$ is a simply connected semisimple linear algebraic group, then there exists a smooth affine model $G$-variety if and only if the simple factors of $G$ are of type $\sA$ or of type $\sC$.} Recalling that an affine spherical $G$-variety is called a \emph{model variety} if its weight monoid is $\dw$, this theorem tells us for which semisimple and simply connected groups $\dw$ itself is a smooth weight monoid. Here we will determine \emph{all} the $G$-saturated weight monoids (and affine spherical varieties) of full rank that are smooth, where $G$ is simply connected and simple.   

\begin{definition} \label{def:fullrank}
We will say that a sublattice $\Chi$ of $\wl$ has \textbf{full rank} if $\rk \Chi = \rk \wl$. Furthermore, we say that a submonoid of $\dw$ has \emph{full rank} if the lattice it generates in $\wl$ has full rank. Finally, we say that an affine spherical $G$-variety has \emph{full rank} if its weight monoid has full rank. 
\end{definition}

\subsection{Restrictions on the set of N-adapted spherical roots} \label{subsec:restrictions}

We begin by collecting some immediate consequences of Theorem~\ref{thm:G_sat_char} that we will use in the sequel. As the following sets of spherical roots will frequently show up, we give them a name:
\begin{align*}
2\sr&:= \{2\alpha \colon \alpha \in \sr\}; \\
S^+&:= \{\alpha + \beta \colon \alpha, \beta \in \sr, \alpha \neq \beta, \alpha \not\perp \beta\}.
\end{align*}

\begin{lemma} \label{lem:SpSigmaN_GSat_FullRank}
Let $\wm$ be a submonoid of $\dw$.
\begin{enumerate}[(a)] 
\item If $\rk \Z\wm = \rk \wl$, then $S^p(\wm) = \emptyset$ \label{lemma_item:Sp_GSat_FullRank}
\item If $\rk \Z\wm = \rk \wl$ and $\wm$ is $G$-saturated, then $\Sigma^N(\wm) \inn 2S \cup S^+$. \label{lemma_item:SigmaN_GSat_FullRank}
\end{enumerate} 
\end{lemma}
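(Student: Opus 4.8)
\emph{Part (a).} The plan is to argue directly from the definition of $S^p(\wm)$ (Definition~\ref{def:Sp}). Suppose $\alpha \in S^p(\wm)$. Then $\<\alpha^{\vee}, \lambda\> = 0$ for every $\lambda \in \wm$, so the functional $\alpha^{\vee} \in \Hom_{\Z}(\wl, \Z)$ vanishes on $\wm$ and hence on $\Z\wm$. Since $\rk \Z\wm = \rk \wl$, the lattice $\Z\wm$ spans $\wl \otimes_{\Z} \Q$, so a $\Z$-linear functional vanishing on $\Z\wm$ vanishes identically; thus $\alpha^{\vee} = 0$. This is impossible because $\<\alpha^{\vee}, \alpha\> = 2$. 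Hence $S^p(\wm) = \emptyset$.

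\emph{Part (b).} First I would record two inputs. By part (a) we have $S^p(\wm) = \emptyset$. And because $\wm$ is $G$-saturated, Proposition~\ref{prop:adapnsphroots} describes $\Sigma^N(\wm)$ as the set of $\sigma \in \Sigma^{sc}(G)$ (Table~\ref{table:scspher}) satisfying its conditions (i)--(v). Since the asserted inclusion $\Sigma^N(\wm) \inn 2\sr \cup S^+$ only requires ruling out those elements of $\Sigma^{sc}(G)$ that are neither a simple root nor in $2\sr \cup S^+$, the strategy is to run through Table~\ref{table:scspher} and show each such $\sigma$ violates condition (iii) or (v), using that $S^p(\wm) = \emptyset$ and that $\Z\wm$ has full rank. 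Simple roots are excluded by (i), and every $\sigma$ already lying in $2\sr \cup S^+$ needs no argument.

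The exclusions come in three flavours. The orthogonal sum $\alpha + \alpha'$ of type $\mathsf{A}_1 \times \mathsf{A}_1$ is handled by condition (v): it would force $\<\alpha^{\vee}, \gamma\> = \<(\alpha')^{\vee}, \gamma\>$ for all $\gamma \in \Z\wm$, and since $\Z\wm$ spans $\wl\otimes_{\Z}\Q$ this gives $\alpha^{\vee} = (\alpha')^{\vee}$, impossible for distinct simple roots. The sum $\alpha_1 + \ldots + \alpha_n$ of type $\mathsf{B}_n$ and the element of type $\mathsf{C}_n$ are handled by the first two bullets of (iii), which require $\{\alpha_2, \ldots, \alpha_{n-1}\} \inn S^p(\wm)$, respectively $\{\alpha_3, \ldots, \alpha_n\} \inn S^p(\wm)$; these sets are nonempty (for $n \ge 3$ in the $\mathsf{B}_n$ case, and always in the $\mathsf{C}_n$ case since there $n \ge 3$), contradicting $S^p(\wm) = \emptyset$. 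For $n = 2$ the type-$\mathsf{B}_n$ element is $\alpha_1 + \alpha_2 \in S^+$ and needs no argument. All remaining $\sigma$ (types $\mathsf A_n$ with $n\ge 3$, the $\mathsf A_3$ and $\mathsf B_3$ elements $\alpha_1+2\alpha_2+\alpha_3$ and $\alpha_1+2\alpha_2+3\alpha_3$, the doubled type-$\mathsf B_n$ element, and the types $\mathsf D_n$, $\mathsf F_4$ and the $\mathsf G_2$ element $4\alpha_1 + 2\alpha_2$) fall under the third bullet of (iii), which with $S^p(\wm) = \emptyset$ demands $\<\alpha^{\vee}, \sigma\> \neq 0$ for \emph{every} $\alpha \in \supp(\sigma)$. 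For each of these I would exhibit one simple root in the support pairing to zero with $\sigma$: an interior node $\alpha_2$ for type $\mathsf A_n$ and for the doubled $\mathsf B_n$ element, the branch node $\alpha_{n-2}$ for type $\mathsf D_n$, the node $\alpha_1$ for the $\mathsf A_3$, $\mathsf B_3$ and $\mathsf F_4$ elements, and the long root $\alpha_2$ for $4\alpha_1 + 2\alpha_2$ in $\mathsf G_2$.

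I expect the only real work — and the main source of error — to be this last, finite case-by-case computation of the Cartan pairings $\<\alpha^{\vee}, \sigma\>$, where keeping the Bourbaki length conventions straight (which simple root is short in types $\mathsf B$, $\mathsf F_4$ and $\mathsf G_2$) is essential: with those conventions each listed ``higher'' root has a node on which $\alpha^{\vee}$ vanishes, which is exactly what makes it incompatible with the empty $S^p(\wm)$.
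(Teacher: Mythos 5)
Your proof is correct and follows essentially the same route as the paper: part (a) is the same rank argument from Definition~\ref{def:Sp}, and part (b) deduces the inclusion from conditions (iii) and (v) of Proposition~\ref{prop:adapnsphroots} together with $S^p(\wm)=\emptyset$ and the full-rank hypothesis. The paper merely leaves the case-by-case inspection of Table~\ref{table:scspher} implicit, whereas you carry it out explicitly (and correctly).
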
 
\begin{proof}
Assertion (\ref{lemma_item:Sp_GSat_FullRank}) follows from Definition~\ref{def:Sp} of $S^p(\wm)$. Indeed, if $\alpha \in S^p(\wm)$, then $\Z\wm$ is a subset of the lattice $\{\lambda \in \wl \colon \<\alpha^{\vee}, \lambda\> = 0\}$, so that $\rk \Z\wm < \rk \wl$. Part (\ref{lemma_item:SigmaN_GSat_FullRank}) is a consequence of Proposition~\ref{prop:adapnsphroots}. Indeed, it follows from the fact that $S^p(\wm) = \emptyset$ and condition (\ref{compat}) in Proposition~\ref{prop:adapnsphroots} that if $\sigma \in \Sigma^N(\wm)$, then $\sigma \in 2S \cup S^+$ or $\sigma = \alpha + \alpha'$ with $\alpha,\alpha' \in \sr$ and $\alpha \perp \alpha'$. The second possibility cannot occur by condition (\ref{item:osigorthosum}) of  Proposition~\ref{prop:adapnsphroots} since $\rk \Z\wm = \rk \wl$. 
\end{proof}

\begin{lemma} \label{lem:alpha_not_in_supp_Sgamma}
Let $\wm$ be a $G$-saturated submonoid of $\dw$. 
If $\alpha \in \sr \setminus \supp(\Sigma^N(\wm))$, then $\alpha \in S_{\wm}$. 
\end{lemma}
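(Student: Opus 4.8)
The plan is to exploit the maximality clause in the definition of $S_{\wm}$ (Proposition~\ref{prop:localizroots}). Set
\[\mathcal{C} := \{\nu \in \Hom_{\Z}(\Z\wm,\Q) \colon \<\nu,\sigma\> \leq 0 \text{ for all } \sigma \in \Sigma^N(\wm)\}.\]
Since $S_{\wm}$ contains \emph{every} subset $F \inn \sr$ for which the relative interior of the cone spanned by $\{\beta^{\vee}|_{\Z\wm} \colon \beta \in F\}$ meets $\mathcal{C}$, it suffices to check that the singleton $F=\{\alpha\}$ has this property: this forces $\{\alpha\} \inn S_{\wm}$, i.e.\ $\alpha \in S_{\wm}$.

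For $F=\{\alpha\}$ the relevant cone is the ray $\Q_{\geq 0}\,\alpha^{\vee}|_{\Z\wm}$. First I would dispose of the degenerate case $\alpha^{\vee}|_{\Z\wm}=0$, which is exactly $\alpha \in S^p(\wm)$: then the ray and its relative interior both equal $\{0\}$, and $0 \in \mathcal{C}$, so the property holds trivially. When $\alpha^{\vee}|_{\Z\wm}\neq 0$, the relative interior of the ray is $\Q_{>0}\,\alpha^{\vee}|_{\Z\wm}$; as $\mathcal{C}$ is stable under scaling by positive rationals, the property then reduces to the single membership $\alpha^{\vee}|_{\Z\wm} \in \mathcal{C}$.

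To establish this membership I would fix $\sigma \in \Sigma^N(\wm)$ and compute $\<\alpha^{\vee},\sigma\>$; the pairing makes sense because $\sigma \in \Z\wm$ by condition~(\ref{item:osiginlat}) of Proposition~\ref{prop:adapnsphroots}. Writing $\sigma = \sum_{\beta \in \sr} n_{\beta}\beta$ with all $n_{\beta} \in \N$, the hypothesis $\alpha \notin \supp(\Sigma^N(\wm))$ gives $n_{\alpha}=0$, so that
\[\<\alpha^{\vee},\sigma\> = \sum_{\beta \in \sr \setminus\{\alpha\}} n_{\beta}\,\<\alpha^{\vee},\beta\>\]
is a nonnegative combination of the Cartan integers $\<\alpha^{\vee},\beta\>$ with $\beta \neq \alpha$. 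Each such integer is $\leq 0$, so the sum is $\leq 0$ for every $\sigma$, which is precisely $\alpha^{\vee}|_{\Z\wm} \in \mathcal{C}$.

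I do not expect a serious obstacle: the argument is a short unwinding of the definition of $S_{\wm}$ combined with the standard fact that off-diagonal Cartan integers are nonpositive. The only points demanding care are the correct use of the maximality clause (so that verifying the property for the single root $\alpha$ already yields $\alpha\in S_{\wm}$) and the separate handling of $\alpha \in S^p(\wm)$, where the spanned cone collapses to $\{0\}$.
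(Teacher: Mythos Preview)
Your proposal is correct and follows essentially the same approach as the paper: the paper's one-line proof simply observes that $\alpha \notin \supp(\Sigma^N(\wm))$ forces $\langle\alpha^{\vee},\sigma\rangle \leq 0$ for all $\sigma \in \Sigma^N(\wm)$, leaving the appeal to the maximality clause of Proposition~\ref{prop:localizroots} implicit. You have spelled out the same idea more carefully, including the degenerate case $\alpha \in S^p(\wm)$ and the explicit use of the nonpositivity of off-diagonal Cartan integers.
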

\begin{proof}
Observe that if $\alpha \notin \supp(\Sigma^N(\wm))$, then $\<\alpha^{\vee}, \sigma\> \leq 0$ for all $\sigma \in \Sigma^N(\wm)$. 
\end{proof}

For the remainder of Section~\ref{subsec:restrictions}, we will \emph{assume that $\wm$ is a smooth, $G$-saturated submonoid of $\dw$ of full rank.}

\begin{lemma} \label{lem:Gsatsmooth_Sgamma}
 \begin{enumerate}[(a)]
\item If $\alpha, \beta \in S_{\wm}$ with $\alpha \neq \beta$, then $\alpha \perp \beta$. \label{lemma_item_Sgamma_A1}
\item If $\alpha \in S_{\wm}$, then $2\alpha \notin \Sigma^N(\wm)$. \label{lemma_item_Sgamma_double}
\end{enumerate}
\end{lemma}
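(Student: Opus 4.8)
The plan is to deduce both assertions from the admissibility condition (c) of Theorem~\ref{thm:G_sat_char}, combined with the restrictions already recorded in Lemma~\ref{lem:SpSigmaN_GSat_FullRank}. Since $\wm$ is smooth, $G$-saturated and of full rank, Lemma~\ref{lem:SpSigmaN_GSat_FullRank}(a) gives $S^p(\wm) = \emptyset$, and Lemma~\ref{lem:SpSigmaN_GSat_FullRank}(b) gives $\Sigma^N(\wm) \subseteq 2\sr \cup S^+$. Because $\wm$ is a smooth weight monoid, Theorem~\ref{thm:G_sat_char}(c) applies and tells us that the triple $(S_{\wm}, \emptyset, \Sigma^N(\wm) \cap \Z S_{\wm})$ is admissible. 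My strategy is to pin down the possible blocks of this admissible triple so tightly that $S_{\wm}$ is forced to be a disjoint union of mutually orthogonal copies of $\sA_1$, each contributing nothing to $\Sigma^N(\wm) \cap \Z S_{\wm}$.

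First I would unwind Definition~\ref{def:admissible} for this triple: admissibility means the Dynkin diagram of $S_{\wm}$ is partitioned into blocks, each being (up to a diagram automorphism) the first entry of a primitive triple from List~\ref{list:pat}, with $S^p(\wm)$ and $\Sigma^N(\wm) \cap \Z S_{\wm}$ assembled from the corresponding second and third entries. Since $S^p(\wm) = \emptyset$ and the $\sr^p_i$ of distinct blocks are disjoint, every block must have $\sr^p_i = \emptyset$. The key step is then a finite inspection of List~\ref{list:pat}: going through the six families, the only primitive triples with $\sr^p_i = \emptyset$ are triple~(\ref{item:admiss:An_standardmod}.) with $n=1$, namely $(\sA_1, \emptyset, \emptyset)$ (cf.\ Remark~\ref{rem:emptytriple}(b)), and triple~(\ref{item:admiss:AnAk}.) with $n = k+1$, namely $(\sA_{k+1} \times \sA_k, \emptyset, \{\alpha_1 + \alpha_1', \ldots, \alpha_k + \alpha_k'\})$ with $k \geq 2$.

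It is here that Lemma~\ref{lem:SpSigmaN_GSat_FullRank}(b) eliminates the second candidate: the spherical roots $\alpha_j + \alpha_j'$ in triple~(\ref{item:admiss:AnAk}.) are sums of two \emph{orthogonal} simple roots (taken from distinct irreducible factors), hence lie in neither $2\sr$ nor $S^+$. If such a block occurred, it would place an element of $\Sigma^N(\wm) \cap \Z S_{\wm} \subseteq \Sigma^N(\wm)$ outside $2\sr \cup S^+$, contradicting Lemma~\ref{lem:SpSigmaN_GSat_FullRank}(b). Thus every block of $S_{\wm}$ comes from $(\sA_1, \emptyset, \emptyset)$, so the Dynkin diagram of $S_{\wm}$ has no edges and $\Sigma^N(\wm) \cap \Z S_{\wm} = \emptyset$. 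Assertion (\ref{lemma_item_Sgamma_A1}) follows at once, since distinct elements of $S_{\wm}$ then lie in distinct connected components and are therefore orthogonal. For (\ref{lemma_item_Sgamma_double}), if $\alpha \in S_{\wm}$ then $2\alpha \in \Z S_{\wm}$ (it is a multiple of $\alpha$), so $2\alpha \in \Sigma^N(\wm)$ would give $2\alpha \in \Sigma^N(\wm) \cap \Z S_{\wm} = \emptyset$, a contradiction; hence $2\alpha \notin \Sigma^N(\wm)$.

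I expect the main obstacle to be the careful, exhaustive enumeration of the primitive triples in List~\ref{list:pat} with $\sr^p_i = \emptyset$, and in particular the verification that triple~(\ref{item:admiss:AnAk}.) is genuinely excluded rather than merely restricted: one must check that its orthogonal-sum spherical roots simply cannot be accommodated once the full-rank hypothesis has forced $\Sigma^N(\wm) \subseteq 2\sr \cup S^+$. Everything else reduces to bookkeeping with the definition of admissibility and the linear independence of simple roots.
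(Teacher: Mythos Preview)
Your proof is correct and follows essentially the same approach as the paper: use Lemma~\ref{lem:SpSigmaN_GSat_FullRank} to get $S^p(\wm)=\emptyset$ and $\Sigma^N(\wm)\subseteq 2S\cup S^+$, then run through List~\ref{list:pat} to see that the only primitive admissible triple compatible with these constraints is $(\sA_1,\emptyset,\emptyset)$. Your treatment is in fact more explicit than the paper's, which simply asserts that $(\sA_1,\emptyset,\emptyset)$ is the unique such triple; you correctly identify the $(\sA_{k+1}\times\sA_k,\emptyset,\{\alpha_j+\alpha_j'\})$ candidate and eliminate it. For part~(\ref{lemma_item_Sgamma_double}) the paper argues slightly more directly---observing that no primitive triple in List~\ref{list:pat} has an element of $2S$ in its third component---but your deduction from $\Sigma^N(\wm)\cap\Z S_{\wm}=\emptyset$ is equally valid.
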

\begin{proof}
Part (\ref{lemma_item_Sgamma_A1}) follows from Theorem~\ref{thm:G_sat_char}(\ref{cond:trip_admissible}) and both assertions of Lemma~\ref{lem:SpSigmaN_GSat_FullRank}: the only primitive admissible triple in List~\ref{list:pat} of which the second component is equal to $\emptyset$ and the third component is a subset of $2S \cup S^+$ is $(\sA_1, \emptyset, \emptyset)$.  Part (\ref{lemma_item_Sgamma_double}) also immediately follows from Theorem~\ref{thm:G_sat_char}(\ref{cond:trip_admissible}): there is no triple in List~\ref{list:pat} for which the third component contains an element of $2\sr$. 
\end{proof}

\begin{lemma} \label{lem:Gsat_two_simple_roots}
Let $\alpha, \beta \in \sr$ with $\alpha \neq \beta$ and $\alpha \not\perp \beta$. Then the following hold:
\begin{enumerate}[(a)]
\item $\alpha \in \supp(\Sigma^N(\wm))$ and $\beta \in \supp(\Sigma^N(\wm))$. \label{lemma_item_1:Gsat_two_simple_roots} 
\item Suppose $\<\alpha^{\vee}, \beta\> \in \{-1,-3\}$. If $2\alpha \in \SNWM$, then $\alpha+\beta \notin \Sigma^N(\wm)$. \label{lemma_item_3:Gsat_two_simple_roots}  
\end{enumerate}
\end{lemma}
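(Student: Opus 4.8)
The plan is to treat the two parts separately, reducing everything to the combinatorial description of $\Sigma^N(\wm)$ in Proposition~\ref{prop:adapnsphroots} and to the maximality characterization of $S_{\wm}$ in Proposition~\ref{prop:localizroots}. Throughout I use that, under the standing hypotheses, $S^p(\wm)=\emptyset$ and $\Sigma^N(\wm)\subseteq 2S\cup S^+$ by Lemma~\ref{lem:SpSigmaN_GSat_FullRank}, and that distinct elements of $S_{\wm}$ are orthogonal by Lemma~\ref{lem:Gsatsmooth_Sgamma}(\ref{lemma_item_Sgamma_A1}). Since the hypotheses of part (a) are symmetric in $\alpha$ and $\beta$, it suffices to prove $\alpha\in\supp(\Sigma^N(\wm))$, and I would argue by contradiction: assuming $\alpha\notin\supp(\Sigma^N(\wm))$, I will show $\{\alpha,\beta\}\subseteq S_{\wm}$, which is absurd because $\alpha\not\perp\beta$.

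To produce $\{\alpha,\beta\}\subseteq S_{\wm}$ I would verify that $\{\alpha,\beta\}$ satisfies the condition defining $S_{\wm}$ in Proposition~\ref{prop:localizroots}, so that maximality forces it inside $S_{\wm}$. Concretely, I would test the point $\nu_t:=\alpha^{\vee}|_{\Z\wm}+t\,\beta^{\vee}|_{\Z\wm}$ for small $t>0$, which lies in the relative interior of the cone spanned by $\alpha^{\vee}|_{\Z\wm}$ and $\beta^{\vee}|_{\Z\wm}$. The whole argument reduces to checking that $\langle\nu_t,\sigma\rangle=\langle\alpha^{\vee}|_{\Z\wm},\sigma\rangle+t\langle\beta^{\vee}|_{\Z\wm},\sigma\rangle\le 0$ for every $\sigma\in\Sigma^N(\wm)$. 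Since $\alpha\notin\supp(\sigma)$ and $\langle\alpha^{\vee},\gamma\rangle\le 0$ for every simple root $\gamma\neq\alpha$, I get $\langle\alpha^{\vee},\sigma\rangle\le 0$; as $\Sigma^N(\wm)$ is finite, the roots with $\langle\alpha^{\vee},\sigma\rangle<0$ cause no trouble once $t$ is small enough.

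The one delicate case, and the step I expect to be the main obstacle, is a root $\sigma$ with $\langle\alpha^{\vee},\sigma\rangle=0$. The key observation that resolves it is that in the expansion $\sigma=\sum_{\gamma}n_\gamma\gamma$ all the summands $n_\gamma\langle\alpha^{\vee},\gamma\rangle$ are $\le 0$, so they must all vanish; hence $\alpha\perp\gamma$ for every $\gamma\in\supp(\sigma)$. Because $\alpha\not\perp\beta$, this forces $\beta\notin\supp(\sigma)$, and therefore $\langle\beta^{\vee},\sigma\rangle\le 0$ and $\langle\nu_t,\sigma\rangle\le 0$. It is precisely here that choosing the economical test point $\nu_t$ built only from $\alpha^{\vee}$ and $\beta^{\vee}$, rather than a generic interior point of the cone spanned by $S_{\wm}$, is what keeps the sign bookkeeping clean: for a generic point one would have to contend with $S^+$-roots of type $\sA_2$ supported on $\beta$, where the relevant pairing can be positive.

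Part (b) is then a short parity argument and needs no new ideas. If $2\alpha\in\Sigma^N(\wm)$, then condition~(\ref{item:corooteven}) of Proposition~\ref{prop:adapnsphroots} gives $\langle\alpha^{\vee},\gamma\rangle\in 2\Z$ for all $\gamma\in\Z\wm$. Suppose, for contradiction, that also $\alpha+\beta\in\Sigma^N(\wm)$; then condition~(\ref{item:osiginlat}) of the same proposition yields $\alpha+\beta\in\Z\wm$, so $\langle\alpha^{\vee},\alpha+\beta\rangle=2+\langle\alpha^{\vee},\beta\rangle$ must be even, i.e.\ $\langle\alpha^{\vee},\beta\rangle$ is even. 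This contradicts the hypothesis $\langle\alpha^{\vee},\beta\rangle\in\{-1,-3\}$, so $\alpha+\beta\notin\Sigma^N(\wm)$.
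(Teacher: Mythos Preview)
Your proof is correct and uses essentially the same mechanism as the paper: for part~(a), a perturbation argument shows that $\{\alpha,\beta\}\subseteq S_{\wm}$ via Proposition~\ref{prop:localizroots}, contradicting Lemma~\ref{lem:Gsatsmooth_Sgamma}(\ref{lemma_item_Sgamma_A1}); for part~(b), both proofs apply conditions~(\ref{item:osiginlat}) and~(\ref{item:corooteven}) of Proposition~\ref{prop:adapnsphroots} to the odd value $\langle\alpha^{\vee},\alpha+\beta\rangle$. The only organizational difference is that the paper handles~(a) in two stages---first showing that at least one of $\alpha,\beta$ lies in $\supp(\Sigma^N(\wm))$, then assuming $\alpha$ does and $\beta$ does not, and testing the point $s\alpha^{\vee}+\beta^{\vee}$---whereas you exploit the symmetry to run a single contradiction from $\alpha\notin\supp(\Sigma^N(\wm))$ with the test point $\alpha^{\vee}+t\beta^{\vee}$; the underlying sign analysis is the same.
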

\begin{proof}
We first show that $\alpha \in \supp(\Sigma^N(\wm))$ or $\beta \in \supp(\Sigma^N(\wm))$. Indeed, if not then we would have $\alpha,\beta \in S_{\wm}$ by Lemma~\ref{lem:alpha_not_in_supp_Sgamma}, which contradicts Lemma~\ref{lem:Gsatsmooth_Sgamma}(\ref{lemma_item_Sgamma_A1}). To prove (\ref{lemma_item_1:Gsat_two_simple_roots}), we can now assume that $\alpha \in \supp(\SNWM)$. If $\beta \notin \supp(\SNWM)$ held, then, using that $\alpha \not\perp \beta$, we would have that $\<\beta^{\vee}, \sigma\> <0$ for all $\sigma \in \SNWM$ with $\alpha \in \supp(\sigma)$. By Proposition~\ref{prop:localizroots} it would follow that $\{\alpha,\beta\} \inn S_{\wm}$, contradicting Lemma~\ref{lem:Gsatsmooth_Sgamma}(\ref{lemma_item_Sgamma_A1}). This proves (\ref{lemma_item_1:Gsat_two_simple_roots}). Assertion (\ref{lemma_item_3:Gsat_two_simple_roots}) follows from conditions (\ref{item:osiginlat}) and (\ref{item:corooteven}) in Proposition~\ref{prop:adapnsphroots}, since $\<\alpha^{\vee}, \alpha + \beta\> \in \{-1,1\}$. 
\end{proof}

\begin{lemma} \label{lem:Gsat_three_simple_roots}
Let $\alpha,\beta,\gamma \in \sr$ with $\alpha\perp\gamma$, $\<\alpha^{\vee}, \beta\>=-1$ and $\gamma \not \perp \beta$. Then the following hold:
\begin{enumerate}[(a)]
\item If $2\alpha \in \SNWM$, then $\beta + \gamma \notin \SNWM$. \label{lemma_item_1:Gsat_three_simple_roots}
\item Suppose $\<\beta^{\vee},\alpha\> = -1$. If $\alpha + \beta \in \SNWM$, then $\beta + \gamma \in \SNWM$. \label{lemma_item_2:Gsat_three_simple_roots}
\end{enumerate}
\end{lemma}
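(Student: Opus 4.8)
Throughout I will use that, since $\wm$ is smooth, $G$-saturated and of full rank, Lemma~\ref{lem:SpSigmaN_GSat_FullRank} gives $S^p(\wm)=\emptyset$ and $\SNWM \inn 2\sr \cup S^+$, and that membership in $\SNWM$ is controlled by the conditions of Proposition~\ref{prop:adapnsphroots}.

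Part~(a) should be a short parity argument. Since $2\alpha \in \SNWM$, condition~(iv) of Proposition~\ref{prop:adapnsphroots} says $\<\alpha^\vee,\lambda\> \in 2\Z$ for every $\lambda \in \Z\wm$. Were $\beta+\gamma$ in $\SNWM$, condition~(ii) would give $\beta+\gamma \in \Z\wm$; but $\<\alpha^\vee,\beta+\gamma\> = \<\alpha^\vee,\beta\>+\<\alpha^\vee,\gamma\> = -1+0 = -1$ is odd, using $\alpha \perp \gamma$. This contradiction proves $\beta+\gamma \notin \SNWM$.

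For part~(b) the plan is to verify the conditions of Proposition~\ref{prop:adapnsphroots} for $\sigma := \beta+\gamma$. Since $\beta,\gamma$ are non-orthogonal simple roots, $\beta+\gamma$ is the sum of the two simple roots of an irreducible rank-two subsystem, of type $\sA_2$, $\sB_2$ or $\sG_2$, and in each case this sum appears in Table~\ref{table:scspher}; hence $\beta+\gamma \in \Sigma^{sc}(G)$, and it is clearly not simple, giving~(i). Because $S^p(\wm)=\emptyset$, the compatibility condition~(iii) holds automatically: the only non-vacuous requirement, ``$\alpha_n \notin S^p(\wm)$'' in the $\sB_2$ case, holds since $S^p(\wm)$ is empty, and a direct check shows that no coroot of $\supp(\beta+\gamma)$ annihilates $\beta+\gamma$. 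Conditions~(iv) and~(v) do not apply, as $\beta+\gamma \notin 2\sr$ and $\beta \not\perp \gamma$. Everything therefore reduces to condition~(ii), that is, to showing $\beta+\gamma \in \Z\wm$.

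This last membership is where I expect the real difficulty to lie. From $\alpha+\beta \in \SNWM$ and condition~(ii) we have $\alpha+\beta \in \Z\wm$, so proving $\beta+\gamma \in \Z\wm$ is equivalent to proving $\gamma-\alpha \in \Z\wm$, i.e.\ $\gamma \equiv \alpha \pmod{\Z\wm}$. Since $\beta \not\perp \gamma$, Lemma~\ref{lem:Gsat_two_simple_roots}(a) gives $\gamma \in \supp(\SNWM)$, so there is $\tau \in \SNWM$ with $\gamma \in \supp(\tau)$; as $\tau \in 2\sr \cup S^+$, either $\tau = 2\gamma$ or $\tau = \gamma+\delta$ with $\delta \in \sr$, $\delta \not\perp \gamma$. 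In the favorable case $\delta = \beta$ we are done immediately, since then $\beta+\gamma = \tau \in \Z\wm$. The main obstacle is the two remaining cases. If $\tau = 2\gamma$, then condition~(iv) applied to $\alpha+\beta \in \Z\wm$ forces $\<\gamma^\vee,\beta\> = -2$, pinning the local type down to $\sB_3$ or $\sC_3$; if $\tau = \gamma+\delta$ with $\delta \neq \beta$, then, the Dynkin diagram of the simple group $G$ being a tree, $\delta$ is orthogonal to both $\alpha$ and $\beta$, so $\alpha-\beta-\gamma-\delta$ is a path of type $\sA_4$ (or its $\sB$/$\sC$ analogue). In these two cases the congruence $\gamma \equiv \alpha \pmod{\Z\wm}$ is not a formal consequence of the spherical roots already produced, and I would close the argument by invoking the full smoothness characterization, Theorem~\ref{thm:G_sat_char}(a)--(b) (the basis condition on $\{\eta^\vee|_{\Z\wm}\}$ and the sum condition), in combination with the hypothesis $\<\beta^\vee,\alpha\> = -1$, to force $\gamma-\alpha \in \Z\wm$. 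I expect this lattice step, not the checking of~(i)--(v), to be the crux.
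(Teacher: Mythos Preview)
Your proof of part~(a) is correct and is exactly the paper's argument.

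For part~(b), however, there is a genuine gap. Your reduction is sound: once $S^p(\wm)=\emptyset$, conditions~(i), (iii), (iv), (v) of Proposition~\ref{prop:adapnsphroots} hold for $\beta+\gamma$, so the whole statement is equivalent to $\beta+\gamma\in\Z\wm$. But you do not actually establish this. In the two residual cases ($\tau=2\gamma$, and $\tau=\gamma+\delta$ with $\delta\neq\beta$) you only say you ``would close the argument by invoking Theorem~\ref{thm:G_sat_char}(a)--(b)'' to force $\gamma-\alpha\in\Z\wm$. That invocation is not carried out, and it is not clear it can be: conditions~(a) and~(b) of Theorem~\ref{thm:G_sat_char} concern the coroots in $S_\wm\setminus S^p(\wm)$ restricted to $\Z\wm$, and there is no evident mechanism by which they produce the specific lattice element $\gamma-\alpha$. (A small side remark: in the $\tau=2\gamma$ case, $\<\gamma^\vee,\beta\>=-2$ forces type $\sB_3$, not ``$\sB_3$ or $\sC_3$''.)

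The paper's route is different and avoids the lattice question entirely. It argues by contradiction: assume $\beta+\gamma\notin\SNWM$ and show that then $\{\beta,\gamma\}\subset S_\wm$, contradicting Lemma~\ref{lem:Gsatsmooth_Sgamma}(\ref{lemma_item_Sgamma_A1}) since $\beta\not\perp\gamma$. Concretely, in the $2\gamma$ case with $\<\gamma^\vee,\beta\>=-2$ one checks that $2\beta^\vee+\gamma^\vee$ annihilates both $\alpha+\beta$ and $2\gamma$ (the only elements of $\SNWM$ with $\beta$ or $\gamma$ in their support, by Lemma~\ref{lem:Gsat_two_simple_roots}(\ref{lemma_item_3:Gsat_two_simple_roots})), whence $\{\beta,\gamma\}\subset S_\wm$ by Proposition~\ref{prop:localizroots}; in the $\gamma+\delta$ case one uses $\beta^\vee+\gamma^\vee$ and the tree structure of the Dynkin diagram to the same end. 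So the operative smoothness condition is really Theorem~\ref{thm:G_sat_char}(\ref{cond:trip_admissible}) via Lemma~\ref{lem:Gsatsmooth_Sgamma}, not conditions~(a)--(b). Your strategy of directly verifying Proposition~\ref{prop:adapnsphroots} was reasonable, but the missing ingredient is precisely this $S_\wm$-argument.
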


\begin{proof}
Assertion (\ref{lemma_item_1:Gsat_three_simple_roots}) follows from conditions (\ref{item:osiginlat}) and (\ref{item:corooteven}) in Proposition~\ref{prop:adapnsphroots}, since $\<\alpha^{\vee}, \beta + \gamma\> =-1$. 

We now prove (\ref{lemma_item_2:Gsat_three_simple_roots}). By Lemma~\ref{lem:Gsat_two_simple_roots}(\ref{lemma_item_1:Gsat_two_simple_roots}) we know that $\gamma \in \supp(\SNWM)$ and by Lemma~\ref{lem:Gsat_two_simple_roots}(\ref{lemma_item_3:Gsat_two_simple_roots}) that $2\beta \notin \SNWM$. To obtain a contradiction, we assume that $\beta + \gamma \notin \SNWM$. Then, by Lemma~\ref{lem:SpSigmaN_GSat_FullRank}(\ref{lemma_item:SigmaN_GSat_FullRank}), we know that $2\gamma \in \SNWM$, or $\gamma + \delta \in \Sigma^N(\wm)$ for some $\delta \in S\setminus\{\gamma\}$ with $\gamma \not\perp \delta$. We prove that both are impossible. 

We first show that $2\gamma \in \SNWM$ cannot happen. If $\<\gamma^{\vee}, \beta\> = -1$, then this follows from part (\ref{lemma_item_1:Gsat_three_simple_roots}). Next we consider the case $\<\gamma^{\vee}, \beta\> \neq -1$. Then $\<\gamma^{\vee}, \beta\> = -2$ and the root subsystem generated by $\{\alpha, \beta, \gamma\}$ is of type $\sB_3$. If $2 \gamma \in \SNWM$, then by Lemma~\ref{lem:SpSigmaN_GSat_FullRank}(\ref{lemma_item:SigmaN_GSat_FullRank}) and Lemma~\ref{lem:Gsat_two_simple_roots}(\ref{lemma_item_3:Gsat_two_simple_roots}), $2\gamma$ and $\alpha+\beta$ are the only elements of $\SNWM$ with $\beta$ or $\gamma$ in their support. Since $\<2\beta^{\vee}+\gamma^{\vee}, \alpha+\beta\>=0$ and $\<2\beta^{\vee}+\gamma^{\vee}, 2\gamma\>=0$
it follows by Proposition~\ref{prop:localizroots} that $\{\beta,\gamma\} \inn S_{\wm}$. This contradicts Lemma~\ref{lem:Gsatsmooth_Sgamma}(\ref{lemma_item_Sgamma_A1}).
 
We have shown that $2\gamma \notin \SNWM$, and consequently, $\gamma + \delta \in \Sigma^N(\wm)$ for some $\delta \in S\setminus\{\gamma\}$ with $\gamma \not\perp \delta$ (there may be two such $\delta \in S$ if the component of the Dynkin diagram of $S$ containing $\alpha, \beta$ and  $\gamma$ is of type $\sD_n, \sE_6, \sE_7$ or $\sE_8$ and $\gamma$ is a node of degree $3$). As is well known, the Dynkin diagram of $S$ contains no cycles, and consequently, $\alpha \perp \delta$ and $\delta \perp \beta$. Thus, $\<\beta^{\vee} + \gamma^{\vee}, \alpha+\beta\> \leq 0$ and $\<\beta^{\vee} + \gamma^{\vee}, \gamma + \delta\> \leq 0$. If the component of the Dynkin diagram of $S$ containing $\alpha, \beta$ and  $\gamma$ is of type $\sD_n, \sE_6, \sE_7$ or $\sE_8$ and $\beta$ is a node of degree $3$, then there may exist $\alpha' \in S\setminus\{\alpha,\beta,\gamma\}$ such that $\alpha'+\beta \in \SNWM$. In that case, $\<\beta^{\vee} + \gamma^{\vee}, \alpha'+\beta\> = 0$. Since for every other $\sigma \in \SNWM$ we have $\beta, \gamma \notin \supp(\sigma)$, and therefore $\<\beta^{\vee}, \sigma\> \leq 0$ and $\<\gamma^{\vee}, \sigma\> \leq 0$, it follows by Proposition~\ref{prop:localizroots} that $\{\beta,\gamma\} \inn S_{\wm}$, again contradicting Lemma~\ref{lem:Gsatsmooth_Sgamma}(\ref{lemma_item_Sgamma_A1}). 
\end{proof}

\subsection{Simple groups of type \texorpdfstring{$\sA$}{A}}
Applying Theorem~\ref{thm:G_sat_char} to $G$-saturated monoids of full rank for $G$ simple and simply connected of type $\sA$ we obtain the following proposition.
\begin{proposition} \label{prop:SL}
Let $G = \SL(n+1)$ for $n\ge 1$. If $\wm$ is a $G$-saturated submonoid of $\dw$ of full rank, then $\wm$ is a smooth weight monoid if and only if one of the following holds
\begin{enumerate}[(1)]
\item $2\rl \inn \Z\wm \inn 2\wl$; \label{propitem_SL_1} 
\item $n$ is even and $S^+ \inn \Z\wm$;  \label{propitem_SL_2}
\item $n$ is odd and $S^+ \inn \Z\wm$ and $\{\alpha_1^{\vee}|_{\Z\wm}, \alpha_3^{\vee}|_{\Z\wm}, \ldots \alpha_n^{\vee}|_{\Z\wm}\}$ is part of a basis of $(\Z\wm)^*$. \label{propitem_SL_3}
\end{enumerate}
\end{proposition}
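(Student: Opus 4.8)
The plan is to pin down the two combinatorial invariants $\Sigma^N(\wm)$ and $S_{\wm}$ attached to a full-rank $G$-saturated $\wm$ and then read the three alternatives off Theorem~\ref{thm:G_sat_char}. Since $\rk\Z\wm=\rk\wl$, Lemma~\ref{lem:SpSigmaN_GSat_FullRank} gives $S^p(\wm)=\emptyset$ and $\Sigma^N(\wm)\inn 2\sr\cup S^+$, and in type $\sA_n$ the only members of $2\sr\cup S^+$ listed in Table~\ref{table:scspher} are the roots $2\alpha_i$ and the ``edges'' $\alpha_i+\alpha_{i+1}$. Full rank also makes condition~(\ref{cond:sumisNsphericalroot}) of Theorem~\ref{thm:G_sat_char} vacuous, because distinct coroots restrict to distinct elements of $(\Z\wm)^*$ (a nonzero integral functional cannot vanish on a full-rank sublattice); so only conditions~(\ref{cond:partofbasis}) and (\ref{cond:trip_admissible}) matter. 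Finally, Proposition~\ref{prop:adapnsphroots} (with $S^p(\wm)=\emptyset$) turns membership in $\Sigma^N(\wm)$ into an explicit lattice condition: $\alpha_i+\alpha_{i+1}\in\Sigma^N(\wm)$ iff $\alpha_i+\alpha_{i+1}\in\Z\wm$, whereas $2\alpha_i\in\Sigma^N(\wm)$ iff $2\alpha_i\in\Z\wm$ and $\<\alpha_i^\vee,\gamma\>\in 2\Z$ for all $\gamma\in\Z\wm$.

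Next I would establish a dichotomy for smooth full-rank $\wm$. As the Dynkin diagram of $\sA_n$ is a connected path, Lemma~\ref{lem:Gsat_two_simple_roots}(a) forces $\supp\Sigma^N(\wm)=\sr$. If a single edge lies in $\Sigma^N(\wm)$, applying the propagation of Lemma~\ref{lem:Gsat_three_simple_roots}(b) to the consecutive triples $\alpha_{i-1},\alpha_i,\alpha_{i+1}$ in both directions drags every edge into $\Sigma^N(\wm)$, after which Lemma~\ref{lem:Gsat_two_simple_roots}(b) excludes all the $2\alpha_i$; if no edge occurs, then $\supp\Sigma^N(\wm)=\sr$ forces every $2\alpha_i$ in. Hence either $\Sigma^N(\wm)=2\sr$ or $\Sigma^N(\wm)=S^+$ (for $n=1$ one must also allow $\Sigma^N(\wm)=\emptyset$). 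By the translations above, $\Sigma^N(\wm)=2\sr$ is equivalent to $2\rl\inn\Z\wm\inn 2\wl$, i.e.\ condition~(\ref{propitem_SL_1}), while $\Sigma^N(\wm)=S^+$ is equivalent to $S^+\inn\Z\wm$.

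The heart of the argument is computing $S_{\wm}$ from Proposition~\ref{prop:localizroots} in each case, and I expect this to be the main obstacle. When $\Sigma^N(\wm)=2\sr$ the target cone is the antidominant chamber; writing a relatively interior point of a cone of restricted coroots as $\nu=\sum_{j\in F}c_j\alpha_j^\vee$ with $c_j>0$, the conditions $\<\nu,\alpha_k\>\le0$ together with positive-definiteness of the Cartan matrix force $\nu=0$, so $S_{\wm}=\emptyset$ and conditions~(\ref{cond:partofbasis}),(\ref{cond:trip_admissible}) hold trivially; thus (\ref{propitem_SL_1}) is exactly the smooth locus here. When $\Sigma^N(\wm)=S^+$ the inequalities $\<\nu,\alpha_i+\alpha_{i+1}\>\le0$ (legitimately read in $\Hom(\wl,\Q)\cong\Hom(\Z\wm,\Q)$, by full rank) become $c_{i-1}+c_{i+2}\ge c_i+c_{i+1}$ with the convention $c_0=c_{n+1}=0$. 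The clean way through is the substitution $e_i:=c_{i+1}-c_{i-1}$, under which the system reads $e_1\le e_2\le\cdots\le e_n$; since $e_1=c_2\ge0$ and $e_n=-c_{n-1}\le0$, all $e_i$ vanish, so $c_{i-1}=c_{i+1}$ for every $i$. Therefore every even-indexed $c_j$ is $0$ and the odd-indexed ones are equal, which yields $S_{\wm}=\emptyset$ when $n$ is even (the relation at $i=n$ forces $c_1=0$) and $S_{\wm}=\{\alpha_1,\alpha_3,\ldots,\alpha_n\}$ when $n$ is odd.

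Finally I would feed $S_{\wm}$ into Theorem~\ref{thm:G_sat_char} in the case $\Sigma^N(\wm)=S^+$. The third component of the triple in (\ref{cond:trip_admissible}) is $\Sigma^N(\wm)\cap\Z S_{\wm}=\emptyset$, since each edge meets an even index and hence lies outside $\Z S_{\wm}$; the triple is then a disjoint union of copies of $(\sA_1,\emptyset,\emptyset)$ and is admissible by List~\ref{list:pat}. Condition~(\ref{cond:sumisNsphericalroot}) is vacuous, and condition~(\ref{cond:partofbasis}) is empty when $n$ is even—giving (\ref{propitem_SL_2})—and, when $n$ is odd, is precisely the requirement that $\{\alpha_1^\vee|_{\Z\wm},\alpha_3^\vee|_{\Z\wm},\ldots,\alpha_n^\vee|_{\Z\wm}\}$ be part of a basis of $(\Z\wm)^*$, i.e.\ (\ref{propitem_SL_3}) (checkable via Lemma~\ref{lem:part_of_basis}). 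The degenerate case $n=1$, where $\Sigma^N(\wm)$ may be empty and $S_{\wm}=\{\alpha_1\}$, falls under (\ref{propitem_SL_3}) as well, since there $S^+\inn\Z\wm$ holds trivially.
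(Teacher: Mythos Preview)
Your proposal is correct and follows essentially the same route as the paper: establish the dichotomy $\Sigma^N(\wm)\in\{2\sr,S^+\}$ via Lemmas~\ref{lem:SpSigmaN_GSat_FullRank}--\ref{lem:Gsat_three_simple_roots}, translate each alternative into the stated lattice conditions via Proposition~\ref{prop:adapnsphroots}, compute $S_\wm$ in each case, and then read off smoothness from Theorem~\ref{thm:G_sat_char}. Your computation of $S_\wm$ via the substitution $e_i=c_{i+1}-c_{i-1}$ and the positive-definiteness argument for the $2\sr$ case make explicit what the paper records only as ``a straightforward computation,'' and your uniform treatment of $n=1$ under alternative~(\ref{propitem_SL_3}) is a minor streamlining over the paper's separate handling of that case.
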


\begin{remark}
For $n=1$, Proposition~\ref{prop:SL} asserts that the smooth weight monoids of rank $1$ for $G=\SL(2)$ are $\N(k\omega)$ with $k\in \{1,2,4\}$, where $\omega$ is the fundamental dominant weight (every normal submonoid of $\dw$ is $G$-saturated for $G=\SL(2)$).  
\end{remark}

\begin{proof} 
It is well-known (and an exercise using Theorem~\ref{thm:G_sat_char} to show) that for $\SL(2)$ the only smooth weight monoids of rank $1$ are $\N(k\omega)$ with $k\in \{1,2,4\}$. What is left is to prove the Proposition for $n\ge 2$. 

We first prove the ``only if'' statement in the proposition. It follows from Lemma~\ref{lem:SpSigmaN_GSat_FullRank}(\ref{lemma_item:SigmaN_GSat_FullRank}), Lemma~\ref{lem:Gsat_two_simple_roots} and Lemma~\ref{lem:Gsat_three_simple_roots} that $\SNWM=2S$ or $\SNWM = S^+$. If $\SNWM = 2S$ then $2\rl \inn \Z\wm$ by Proposition~\ref{prop:adapnsphroots}(\ref{item:osiginlat}), while $\Z\wm \inn 2\wl$ by Proposition~\ref{prop:adapnsphroots}(\ref{item:corooteven}). On the other hand, if $\SNWM = S^+$ and $n$ is odd, then a straightforward computation shows that $S_{\wm} = \{\alpha_1,\alpha_3,\ldots,\alpha_n\}$. Condition (\ref{cond:partofbasis}) in Theorem~\ref{thm:G_sat_char} then finishes the proof of the ``only if'' statement.

We now prove the reverse implication. If (\ref{propitem_SL_1}) holds, then one deduces from Proposition~\ref{prop:adapnsphroots} that $\SNWM = 2S$. A straightforward computation then shows that $S_{\wm} = \emptyset$, and therefore $(S_{\wm}, S^p(\wm), \SNWM \cap \Z S_{\wm}) = (\emptyset, \emptyset, \emptyset)$, which is admissible. Consequently, $\wm$ is smooth by Theorem~\ref{thm:G_sat_char}. If we have (\ref{propitem_SL_2}) or (\ref{propitem_SL_3}), then one computes using Proposition~\ref{prop:adapnsphroots} that $\SNWM = S^+$. If $n$ is even, then one computes that $S_{\wm}=\emptyset$, and it follows again that $\wm$ is smooth.  If $n$ is odd, then $S_{\wm} = \{\alpha_1,\alpha_3,\ldots,\alpha_n\}$. Consequently, $(S_{\wm}, S^p(\wm), \SNWM \cap \Z S_{\wm}) = (\{\alpha_1,\alpha_3,\ldots,\alpha_n\}, \emptyset, \emptyset)$, which is admissible. Condition~(\ref{cond:partofbasis}) of Theorem~\ref{thm:G_sat_char} is met by $\wm$ because it is part of the assumption (\ref{propitem_SL_3}) in the Proposition, and condition~(\ref{cond:sumisNsphericalroot}) of the Theorem is trivially met. We have shown that, once again, $\wm$ is smooth. 
\end{proof}

\begin{remark}
 The ``if'' statement in Proposition~\ref{prop:SL} could also be proved by exhibiting for every $\wm$ satisfying (\ref{propitem_SL_1}), (\ref{propitem_SL_2}) or (\ref{propitem_SL_3}) a smooth affine spherical $\SL(n+1)$-variety with $\wm(X) = \wm$. We list those varieties in Table~\ref{table:SL}.
\end{remark}

In the next lemma, we describe the lattices $\Z\wm$ in Proposition~\ref{prop:SL} more explicitly.
\begin{lemma} \label{lem:SL_lattices}
Let $\Chi$ be a sublattice of  the weight lattice $\wl$ of $\SL(n+1)$, where $n\ge 1$.
\begin{enumerate}[(a)]
\item We have $\rl \inn \Chi \inn \wl$ if and only if $\Chi = \<\alpha_2, \alpha_3, \ldots, \alpha_n, d\omega_n\>_{\Z}$ for some $d\in \N$ with $d|(n+1)$. \label{lemma_item_SL_lattices_0}
\item Suppose $n$ is even.  The lattice $\Chi$ has full rank and contains $S^+$ if and only if $\Chi = \Z S^+ \oplus \Z(k\omega_{n-1})$ for some $k \in \N\setminus\{0\}$. \label{lemma_item_SL_lattices_1}
\item Suppose $n$ is odd. The lattice $\Chi$ has full rank, contains $S^+$ and $\{\alpha_1^{\vee}|_{\Chi},\alpha_3^{\vee}|_{\Chi}, \ldots, \alpha_n^{\vee}|_{\Chi}\}$ is part of a basis of $\Chi^*$   if and only if  
$$\Chi = \<\alpha_2+\alpha_3, \alpha_3+\alpha_4, \ldots,\alpha_{n-1} + \alpha_{n}, e\omega_{n-1}, r\omega_{n-1} + \omega_n\>_{\Z}$$
for some $e,r\in \N$ with $e|\frac{n+1}{2}$ and $0\leq r \leq e-1$.  \label{lemma_item_SL_lattices_2}
\end{enumerate}
\end{lemma}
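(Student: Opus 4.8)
The plan is to carry out everything in the standard type-$\sA_n$ coordinates. I would realize $\wl = \Z^{n+1}/\Z\mathbf 1$ with $\mathbf 1=(1,\dots,1)$, so that $\alpha_i = \eps_i-\eps_{i+1}$, the weight $\omega_j$ is the class of $\eps_1+\dots+\eps_j$, and each element of $S^+=\{\alpha_i+\alpha_{i+1}: 1\le i\le n-1\}$ is $\alpha_i+\alpha_{i+1}=\eps_i-\eps_{i+2}$. Two elementary facts drive the whole proof: $\wl/\rl\cong\Z/(n+1)$ with $\omega_n$ mapping to a generator, and the identity $\sum_{i=1}^n i\,\alpha_i=(n+1)\omega_n$. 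For part (a) I would use these to identify the lattices $\Chi$ with $\rl\inn\Chi\inn\wl$ with the subgroups of $\Z/(n+1)$, hence with the divisors $d\mid(n+1)$. The displayed lattice $\langle\alpha_2,\dots,\alpha_n,d\omega_n\rangle_\Z$ is then checked to be the one of index $d$: from the identity above, $(n+1)\omega_n=\frac{n+1}{d}(d\omega_n)$, so $\alpha_1$ lies in it exactly when $d\mid(n+1)$, which gives both inclusion $\rl\inn\Chi$ and the bijection.

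For part (b), with $n$ even, I would compute $\wl/\Z S^+$ directly. In the coordinates above, $\Z S^+$ is the image of $\{a\in\Z^{n+1}:\sum_{j\text{ odd}}a_j=\sum_{j\text{ even}}a_j=0\}$, so $\wl/\Z S^+\cong\Z^2/\Z(p,q)$, where $p,q$ are the numbers of odd and even indices in $\{1,\dots,n+1\}$. When $n$ is even one has $p-q=1$, so $(p,q)$ is primitive, $\wl/\Z S^+\cong\Z$ is torsion free, and a short check shows $\omega_{n-1}$ maps to a generator. The full-rank overlattices of $\Z S^+$ are then precisely the preimages of the rank-one subgroups $k\Z$ ($k\ge1$), giving $\Chi=\Z S^+\oplus\Z(k\omega_{n-1})$, with the sum direct because the image of $\omega_{n-1}$ generates a free quotient.

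Part (c), with $n$ odd, is the substantial case, since now $p=q=\frac{n+1}{2}=:p$ and $\wl/\Z S^+\cong\Z\oplus\Z/p$ acquires torsion. Rather than fight the torsion, I would pass to the free rank-two lattice $Q:=\wl/\langle\alpha_2+\alpha_3,\dots,\alpha_{n-1}+\alpha_n\rangle_\Z$, in which, in suitable coordinates, the images of $\alpha_1+\alpha_2$, $\omega_{n-1}$ and $\omega_n$ are $(-p,-p)$, $(-1,-1)$ and $(0,-1)$. Every lattice $\Chi$ under consideration contains $\alpha_2+\alpha_3,\dots,\alpha_{n-1}+\alpha_n$ (as $S^+\inn\Chi$), so it is the preimage of its image $\bar\Chi\inn Q\cong\Z^2$, and the conditions ``$\Chi$ has full rank and $S^+\inn\Chi$'' become exactly ``$\bar\Chi$ has finite index and $(p,p)\in\bar\Chi$.'' The crucial step is to translate the coroot condition: using that a family of functionals is part of a basis of $\Chi^*$ iff the associated evaluation map is onto (this is the content of Lemma~\ref{lem:part_of_basis}), I would show $\{\alpha_i^\vee|_\Chi:i\text{ odd}\}$ is part of a basis iff the map $\Chi\to\Z^{(n+1)/2}$, $\lambda\mapsto(\langle\alpha_i^\vee,\lambda\rangle)_{i\text{ odd}}$, is surjective. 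Since the roots $\alpha_i+\alpha_{i+1}$ already surject onto the sum-zero sublattice of $\Z^{(n+1)/2}$, this collapses to the single scalar condition that some $\lambda\in\Chi$ has $\sum_{i\text{ odd}}\langle\alpha_i^\vee,\lambda\rangle=1$; a direct computation shows this functional descends to $Q$ and there equals $(a,b)\mapsto a-b$.

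It then remains to solve a purely two-dimensional problem: classify the finite-index sublattices $\bar\Chi\inn\Z^2$ with $(p,p)\in\bar\Chi$ and with $a-b=\pm1$ for some $(a,b)\in\bar\Chi$. Intersecting $\bar\Chi$ with the diagonal $\Z(1,1)$ gives $\Z(e,e)$ for a unique $e\ge1$, and $(p,p)\in\bar\Chi$ forces $e\mid p$; the scalar condition splits the sequence $0\to\Z(e,e)\to\bar\Chi\to\Z\to0$, and choosing the complementary generator inside a transversal of $\Z(e,e)$ yields $\bar\Chi=\langle(e,e),(r,r+1)\rangle_\Z$ with $0\le r\le e-1$, uniquely determined. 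Unwinding the coordinates (using the images of $\omega_{n-1},\omega_n$ above) rewrites this as the asserted generating set, with $e\mid\frac{n+1}{2}$. I expect the main obstacle to be precisely the translation of the ``part of a basis of $\Chi^*$'' condition into the single scalar condition on $\bar\Chi$; once that reduction is secured, the ``if'' direction (where the coroot condition turns out to hold automatically for lattices of the stated form) and the enumeration are routine.
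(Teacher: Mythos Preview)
Your approach is correct and, for parts (b) and (c), genuinely different from the paper's. For part (a) the two arguments are essentially the same: both use the identity $\sum_{i=1}^n i\,\alpha_i=(n+1)\omega_n$ to identify the lattices between $\rl$ and $\wl$ with the divisors of $n+1$.

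For parts (b) and (c) the paper proceeds by direct matrix computations. In (b), it finds an explicit element $\tfrac{n}{2}\omega_{n-1}+\omega_n\in\Z S^+$, writes down the $n\times n$ matrix of coroot pairings of $\{\sigma_2,\dots,\sigma_{n-1},\tfrac{n}{2}\omega_{n-1}+\omega_n,\omega_{n-1}\}$, and checks its determinant is $1$ to conclude $\wl=\Z S^+\oplus\Z\omega_{n-1}$. In (c), it similarly exhibits $\Z S^+=\langle\sigma_2,\dots,\sigma_{n-1},d\omega_{n-1}\rangle_\Z$ with $d=\tfrac{n+1}{2}$, uses an explicit $(n-1)\times n$ matrix $C$ to read off $\wl/\Z S^+\cong\Z\oplus\Z/d\Z$ and the preimage of its torsion, parametrizes the overlattices as $\langle\sigma_2,\dots,\sigma_{n-1},e\omega_{n-1},r\omega_{n-1}+m\omega_n\rangle_\Z$, and finally applies Lemma~\ref{lem:part_of_basis} (the minor criterion) to pin down $m=1$.

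Your route replaces these matrix computations with a structural description of the quotient via the parity-of-index map, which makes the arguments shorter and more transparent: in (b), primitivity of $(p,q)$ immediately gives $\wl/\Z S^+\cong\Z$; in (c), quotienting by the smaller set $\{\sigma_2,\dots,\sigma_{n-1}\}$ rather than all of $S^+$ keeps the quotient torsion-free, and reducing the coroot condition to the single scalar $\sum_{i\text{ odd}}\langle\alpha_i^\vee,\lambda\rangle=1$ is cleaner than the paper's direct invocation of the minor criterion on an explicit matrix. The trade-off is that the paper's explicit matrices make the computations verifiable by hand without needing the $\eps$-coordinate model, whereas your argument asks the reader to track a few identifications (in particular, that the functional $\sum_{i\text{ odd}}\alpha_i^\vee$ descends to $Q$ and is given there by $a-b$, and that the images of $\omega_{n-1},\omega_n$ in $Q$ are what you claim). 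These are routine checks, but it would strengthen the write-up to state them once with a one-line justification.
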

\begin{proof}
We begin with the proof of assertion (\ref{lemma_item_SL_lattices_0}). The direct computation
\(\textstyle{\sum_{k=1}^n}k\alpha_k = (n+1)\omega_n\)
implies that $\rl = \<\alpha_2,\alpha_3, \ldots, \alpha_n, (n+1)\omega_n\>_{\Z}$.  One readily checks that $\wl = \<\alpha_2,\alpha_3, \ldots, \alpha_n,\omega_n\>_{\Z}$. The assertion follows.

For the rest of the proof we put $\sigma_i = \alpha_i + \alpha_{i+1}$ for every $i \in \{1,2,\ldots,n-1\}$. Sublattices $\Chi$ of $\wl$ of full rank and containing $S^+$ are in one-to-one correspondence 
with subgroups of $\wl/\Z S^+$ of maximal rank.
We now prove assertion (\ref{lemma_item_SL_lattices_1}) and assume that $n\ge 2$ is even. One checks that
\[
\biggl({\textstyle \sum_{k=1}^{\frac{n}{2}-1}} k (\sigma_{2k-1}+ \sigma_{2k})\biggr) + \tfrac{n}{2}\sigma_{n-1} = \tfrac{n}{2} \omega_{n-1} + \omega_n.
\]
Clearly, this implies that 
\[\Z S^+ = \<\sigma_2, \sigma_3,\ldots, \sigma_{n-1}, \tfrac{n}{2} \omega_{n-1} + \omega_n\>_{\Z}.\]
Next, consider the matrix $A \in \operatorname{Mat}_{n\times n}(\Z)$ whose $(i,j)$-th entry is
\[A_{ij} = 
\begin{cases}
\<\alpha_j^{\vee},\sigma_{i+1}\> &\text{if $1\leq i\leq n-2$};  \\
\<\alpha_j^{\vee},\tfrac{n}{2} \omega_{n-1} + \omega_n\> &\text{if $i=n-1$};\\
\< \alpha_j^{\vee},\omega_{n-1}\> &\text{if $i=n$}.
\end{cases}
\]
It is straightforward to compute that $\det(A) = 1$. This implies that 
\[\wl = \Z S^+ \oplus \Z \omega_{n-1}.\] Assertion (\ref{lemma_item_SL_lattices_1}) follows. 

We now move to the proof of the assertion (\ref{lemma_item_SL_lattices_2}). Assume that $n$ is odd, and put $d=\frac{n+1}{2}$. One checks that
\[
{\textstyle \sum_{k=1}^{d-1}} k (\sigma_{2k-1}+ \sigma_{2k})= d \omega_{n-1}
\]
Clearly, this implies that 
\[\Z S^+ = \<\sigma_2, \sigma_3,\ldots, \sigma_{n-1}, d \omega_{n-1}\>_{\Z}.\]
Next, consider the matrix $C \in \operatorname{Mat}_{(n-1)\times n}(\Z)$ whose $(i,j)$-th entry is
\[C_{ij} = 
\begin{cases}
\< \alpha_j^{\vee},\sigma_{i+1}\> &\text{if $1\leq i\leq n-2$};  \\
\<\alpha_j^{\vee},d \omega_{n-1}\> &\text{if $i=n-1$}
\end{cases}
\]
For example, for $n=5$, we have
\[C = \begin{pmatrix}
-1 & 1 & 1 & -1 & 0 \\
0 & -1 & 1 & 1 & -1 \\
0 & 0 & -1 & 1 & 1 \\
0 & 0 & 0 & 3 & 0 \\
\end{pmatrix}
\]
It follows by a standard argument that $\wl/\Z S^+ \cong \Z \oplus \Z/d\Z$ and that the inverse image of the torsion $\Z/d\Z$ under the quotient map $\wl \onto \wl/\Z S^+$ is
\[\<\sigma_2, \sigma_3,\ldots, \sigma_{n-1}, \omega_{n-1}\>_{\Z}.\]
One deduces that if $\Chi$ is a lattice of rank $n$ containing $\Z S^+$, then 
$$\Chi = \<\sigma_2, \sigma_3, \ldots,\sigma_{n-1}, e\omega_{n-1}, r\omega_{n-1} + m\omega_n\>_{\Z}$$
for some $e,r,m \in \N$ with $e|d$, $0\leq r \leq e-1$ and $m\neq 0$.
Applying Lemma~\ref{lem:part_of_basis} one shows that for such a lattice $\Chi$, the set $\{\alpha_1^{\vee}|_{\Chi},\alpha_3^{\vee}|_{\Chi}, \ldots, \alpha_n^{\vee}|_{\Chi}\}$ is part of a basis of $\Chi^*$ if and only if $m=1$. This proves the Lemma.
\end{proof}

\begin{remark}
We observe that it follows from Lemma~\ref{lem:SL_lattices} that there are infinitely many lattices $\Z\wm$ (and correspondingly infinitely many non-isomorphic smooth affine spherical $\SL(n)$-varieties) satisfying (\ref{propitem_SL_2}) in Proposition~\ref{prop:SL}. On the other hand, there are only finitely many lattices  $\Z\wm$ satisfying (\ref{propitem_SL_3}) in Proposition~\ref{prop:SL}: in fact there are $\sigma(\frac{n+1}{2})$ such lattices, where $\sigma(\frac{n+1}{2})$ is the sum of all divisors of $\frac{n+1}{2}$. We also observe that there are $\tau(n+1)$ lattices $\Z\wm$ satisfying (\ref{propitem_SL_1}) of Proposition~\ref{prop:SL}, where $\tau(n+1)$ is the number of divisors of $n+1$.  
\end{remark}

\begin{corollary} \label{cor:SL}
Let $G=\SL(n+1)$ with $n\ge 1$. A $G$-saturated submonoid $\wm$ of $\dw$ of full rank is a smooth weight monoid if and only if 
\begin{enumerate}[(1)]
\item $\Z\wm= 2\<\alpha_2, \alpha_3, \ldots, \alpha_n, d\omega_n\>_{\Z}$ for some $d\in \N$ with $d|(n+1)$;
\item $n$ is even and $\Z\wm = \<\alpha_1+\alpha_2, \alpha_2+\alpha_3, \ldots, \alpha_{n-1}+\alpha_n, k\omega_{n-1}\>_{\Z}$ for some $k\in \N \setminus \{0\}$; or
\item $n$ is odd and $\Z\wm=\<\alpha_2+\alpha_3, \alpha_3+\alpha_4, \ldots,\alpha_{n-1} + \alpha_{n}, e\omega_{n-1}, r\omega_{n-1} + \omega_n\>_{\Z}$
for some $e,r\in \N$ with $e|\frac{n+1}{2}$ and $0\leq r \leq e-1$.
\end{enumerate}
\end{corollary}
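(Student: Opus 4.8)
The plan is to derive Corollary~\ref{cor:SL} by substituting the explicit lattice descriptions of Lemma~\ref{lem:SL_lattices} into the three conditions of Proposition~\ref{prop:SL}. Since $\wm$ is $G$-saturated, the remark following Definition~\ref{def:G-saturated} shows that $\wm = \Z\wm \cap \dw$ is completely determined by the lattice $\Z\wm$; hence it suffices to identify exactly which full-rank lattices arise as $\Z\wm$ for a smooth $\wm$, and this is precisely the content of Proposition~\ref{prop:SL}. As both Proposition~\ref{prop:SL} and Lemma~\ref{lem:SL_lattices} are equivalences, composing them yields both implications of the Corollary at once, so the work is entirely a matter of matching each of the three cases of the Proposition with the corresponding part of the Lemma.

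Cases~(\ref{propitem_SL_2}) and~(\ref{propitem_SL_3}) of Proposition~\ref{prop:SL} translate essentially verbatim. Both require $S^+ \inn \Z\wm$, together with the basis condition on $\{\alpha_1^{\vee}|_{\Z\wm}, \alpha_3^{\vee}|_{\Z\wm}, \ldots, \alpha_n^{\vee}|_{\Z\wm}\}$ when $n$ is odd; these are exactly the hypotheses of Lemma~\ref{lem:SL_lattices}(\ref{lemma_item_SL_lattices_1}) for $n$ even and of Lemma~\ref{lem:SL_lattices}(\ref{lemma_item_SL_lattices_2}) for $n$ odd. The only elementary point to record is that in type $\sA_n$ the non-orthogonal pairs of distinct simple roots are the adjacent ones, so that $S^+ = \{\alpha_i+\alpha_{i+1} \colon 1 \le i \le n-1\}$ and hence $\Z S^+ = \<\alpha_1+\alpha_2, \alpha_2+\alpha_3, \ldots, \alpha_{n-1}+\alpha_n\>_{\Z}$. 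Feeding this into Lemma~\ref{lem:SL_lattices}(\ref{lemma_item_SL_lattices_1}) yields the generators in case~(2) of the Corollary, while Lemma~\ref{lem:SL_lattices}(\ref{lemma_item_SL_lattices_2}) gives case~(3) directly.

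The one case requiring a small manipulation is case~(\ref{propitem_SL_1}) of Proposition~\ref{prop:SL}, namely $2\rl \inn \Z\wm \inn 2\wl$. Here I would pass to the rescaled lattice $\Chi := \tfrac12\Z\wm$: the inclusion $\Z\wm \inn 2\wl$ ensures that $\Chi$ is an honest sublattice of $\wl$, and the chain $2\rl \inn \Z\wm \inn 2\wl$ is equivalent to $\rl \inn \Chi \inn \wl$. Lemma~\ref{lem:SL_lattices}(\ref{lemma_item_SL_lattices_0}) describes exactly these $\Chi$ as $\<\alpha_2, \alpha_3, \ldots, \alpha_n, d\omega_n\>_{\Z}$ with $d \mid (n+1)$, and multiplying back by $2$ produces case~(1) of the Corollary; the base case $n=1$ (where $S^+=\emptyset$) is then a direct check that the three families reduce to $\N\omega$, $\N(2\omega)$ and $\N(4\omega)$, matching the remark after Proposition~\ref{prop:SL}. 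Since the whole argument is a substitution of one equivalence into another, I expect no genuine obstacle; the only place demanding care is this halving, where one must verify that dividing by $2$ carries $2\rl \inn \Z\wm \inn 2\wl$ onto precisely the hypotheses of Lemma~\ref{lem:SL_lattices}(\ref{lemma_item_SL_lattices_0}), neither introducing nor losing any divisibility constraint on $d$.
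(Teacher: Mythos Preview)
Your proposal is correct and follows exactly the approach the paper intends: the corollary is stated without proof precisely because it is the direct combination of Proposition~\ref{prop:SL} with Lemma~\ref{lem:SL_lattices}, and you have matched each of the three cases correctly, including the halving step for case~(1). The paper offers no separate argument beyond the juxtaposition of these two results, so there is nothing further to compare.
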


\begin{remark}
In Table~\ref{table:SL} and in the proof of Corollary~\ref{cor:SL_varieties} below, the special orthogonal group $\SO(n+1)$ is defined as the set of matrices $g\in \SL(n+1)$ such that $g\cdot {}^tg=\textrm{Id}$, and when $n$ is odd, we choose $\Sp(n+1)$ inside $\SL(n+1)$ to be the subgroup fixing the skew-symmetric bilinear form given by the matrix
\[
J=\left(
\begin{array}{ccccc}
0 & 1 & \ldots & 0 & 0 \\
-1 & 0 & \ldots & 0 & 0 \\
\vdots & \vdots & \ddots & \vdots & \vdots \\
0 & 0 & \ldots & 0 & 1 \\
0 & 0 & \ldots & -1 & 0
\end{array}
\right)
\]
When $n$ is even, as in case~5 of Table~\ref{table:SL}, we embed $\Sp(n)$ into $\SL(n+1)$ via $A\mapsto \left(\begin{smallmatrix}A & 0 \\ 0 & 1 \end{smallmatrix}\right)$.
\end{remark}

\begin{remark}
In the proofs of Corollaries~\ref{cor:SL_varieties} and~\ref{cor:other_varieties} below, we use references where the sets of N-spherical roots of certain varieties are computed. To be precise, in these references one finds the sets of their {\em spherical roots}, which can be defined for an affine spherical variety $Z$ as the primitive elements in $\Z\Gamma(Z)$ that are positive rational multiples of the elements of $\Sigma^N(Z)$. Denote by $\Sigma(Z)$ the set of spherical roots of $Z$. The relation between $\Sigma^N(Z)$ and $\Sigma(Z)$ was described by Losev in \cite[Theorem 2]{losev-uniqueness} and for more details on these sets we refer to \cite{vs-interpretations}. Here we will only need the following simple observation: in general, the elements of $\Sigma^N(Z)$ appear in Table~\ref{table:scspher}, and are positive integer multiples of the elements of $\Sigma(Z)$. This is enough to conclude that $\Sigma^N(Z)=\Sigma(Z)$ in all cases we refer to during the aforementioned proofs, as one can check case-by-case using the description of $\Sigma(Z)$ given in the references used in the proofs of the Corollaries.
\end{remark}

\begin{corollary} \label{cor:SL_varieties}
Let $G=\SL(n+1)$ with $n\ge 1$. Every $G$-saturated smooth affine spherical $G$-variety of full rank is $G$-equivariantly isomorphic to a (unique) $G$-variety in Table~\ref{table:SL}.
\end{corollary}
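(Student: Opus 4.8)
The plan is to combine the arithmetic classification of lattices in Corollary~\ref{cor:SL} with Losev's uniqueness theorem (Theorem~\ref{thm:losev}) to reduce the problem to a matching exercise. Since $\wm$ is $G$-saturated, it is completely determined by the lattice $\Z\wm$ (by the Remark following Definition~\ref{def:G-saturated}), and by Corollary~\ref{cor:SL} every smooth $G$-saturated weight monoid of full rank falls into exactly one of the three families listed there, parametrized by the divisor $d$ (case~1), the positive integer $k$ (case~2), or the pair $(e,r)$ (case~3). By Theorem~\ref{thm:losev}, two smooth affine spherical $G$-varieties with the same weight monoid are $G$-equivariantly isomorphic, so it suffices to exhibit, for each lattice on the list, one concrete smooth affine spherical $\SL(n+1)$-variety whose weight monoid generates that lattice, and to check that these varieties are pairwise non-isomorphic (which again reduces to checking that their lattices are distinct).

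First I would set up the dictionary between the lattices and the candidate varieties in Table~\ref{table:SL}. These should be built from the standard geometric models: quotients $\SL(n+1)/H$ by reductive subgroups $H$ of the appropriate type (orthogonal, symplectic, Levi-type, or their products with central tori), together with $\C^\times$-fibrations and finite coverings that adjust the lattice by the parameter $d$, $k$, or $(e,r)$. For each such $Z$ I would need to compute two things: its set of N-spherical roots $\Sigma^N(Z)$ and its weight monoid $\wm(Z)$, and then verify that $\Z\wm(Z)$ equals the prescribed lattice. The N-spherical roots can be read off from the literature via the Remark relating $\Sigma^N(Z)$ to the spherical roots $\Sigma(Z)$: in all the relevant cases the two sets coincide, so the published computations of $\Sigma(Z)$ suffice. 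The smoothness and affineness of each $Z$ are either built into the construction or follow from the fact that $H$ is reductive.

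The main verification is then that $\Z\wm(Z)$ matches the target lattice, which uses the explicit generators from Lemma~\ref{lem:SL_lattices}: for case~1 the lattice $2\<\alpha_2,\ldots,\alpha_n,d\omega_n\>_{\Z}$, for case~2 the lattice $\<\alpha_1+\alpha_2,\ldots,\alpha_{n-1}+\alpha_n,k\omega_{n-1}\>_{\Z}$, and for case~3 the lattice with generators involving $e\omega_{n-1}$ and $r\omega_{n-1}+\omega_n$. I expect the hardest step to be producing the varieties realizing the full range of the parameters—especially the torsion parameter $r$ and the index parameters $d$, $e$, $k$—since the naive homogeneous space only gives one distinguished lattice, and the remaining lattices must be obtained by passing to carefully chosen coverings, central-torus extensions, or $\C^\times$-fibrations that shift the weight lattice in exactly the prescribed way. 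Matching the combinatorial data $(\Z\wm, \Sigma^N)$ of the constructed variety to the target for \emph{every} admissible value of these parameters, rather than for a single representative, is where the real work lies; once the weight monoids are shown to coincide with those classified in Corollary~\ref{cor:SL}, uniqueness and non-redundancy of the table follow formally from Theorem~\ref{thm:losev} together with the fact that distinct parameter values yield distinct lattices.
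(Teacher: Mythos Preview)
Your outline follows the paper's approach closely: reduce via Corollary~\ref{cor:SL} and Theorem~\ref{thm:losev} to producing one smooth affine spherical variety for each lattice on the list, then match. The paper does exactly this, computing $\Z\wm(Y)$ for each candidate $Y$ by sandwiching it between the lattices of known varieties $Y_0$ and $Y_1$ (e.g.\ $G/\SO(n+1)$ and $G/N_G(\SO(n+1))$) and using index arguments via \cite[Lemma~2.4]{gandini-spherorbclos}.

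There is, however, one genuine gap in your plan. You propose to verify that $\Z\wm(Z)$ equals the target lattice, but this does \emph{not} suffice: a non-$G$-saturated monoid can generate the same lattice as a $G$-saturated one, and then $\wm(Z)$ would be strictly smaller than the $G$-saturated monoid you want. You must separately check that each candidate variety $Z$ is itself $G$-saturated. The paper does this explicitly via Proposition~\ref{prop:G-saturatedness}, verifying in each case that $\Sigma^N(Z)$ contains no simple root and that $Z$ has no $G$-stable prime divisor. For the homogeneous cases this is straightforward once $\Sigma^N$ is known; for the non-homogeneous case~6 (the bundle $\SL(n+1)\times^{Z_{2e}\Sp(n+1)}\C^{n+1}$) the absence of $G$-stable divisors requires an additional argument, which the paper supplies by observing that any such divisor would restrict to an $\Sp(n+1)$-stable hypersurface in $\C^{n+1}$. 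Without this step, matching lattices alone does not pin down the weight monoid, and Losev's theorem cannot be invoked.
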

\begin{longtable}{|c|c|c|c|c|c|}
\caption{$G$-saturated smooth affine spherical $\SL(n+1)$-varieties of full rank.}
\label{table:SL}\\
\hline
Case & $n$ & $X$ & $\Z\wm(X)$ & $\Sigma^N(X)$ & parameters \\
\hline
1 & $1$ &$\C^2$ & $\<\omega_1\>_{\Z}$& $\emptyset$ &  \\
\hline 
2 & $1$ &$\SL(2)/T$ & $\<2\omega_1\>_{\Z}$& $\{2\alpha_1\}$ &  \\
\hline 
3 & $1$ &$\SL(2)/N(T)$ & $\<4\omega_1\>_{\Z}$& $\{2\alpha_1\}$ &  \\
\hline 
4 & $\ge 2$ &\begin{tabular}{c}$\SL(n+1)/Z_d\SO(n+1)$\\
{\scriptsize $Z_d := \{\zeta\mathrm{Id}\colon \zeta \in \C, \zeta^{d}=1\}$}\\
{\scriptsize if $n$ is even,}\\
{\scriptsize $Z_d := \langle \text{diag}(\xi^\frac{n+1}d,\ldots,\xi^\frac{n+1}d,(-\xi)^\frac{n+1}d)\rangle$} \\
{\scriptsize $\xi$=primitive $2(n+1)$-th root of 1}\\
{\scriptsize if $n$ is odd}\\
\end{tabular}
& \begin{tabular}{c} $2\<\alpha_2,\alpha_3,\ldots,\alpha_n,$ \\ $d\omega_n\>_{\Z}$ \end{tabular} & $2S$ & $d|(n+1)$ \\
\hline 
5 & \begin{tabular}{c}$\ge 2$,\\ even\end{tabular} &
\begin{tabular}{c}$\SL(n+1)/H_k\Sp(n)$\\
{\scriptsize $H_k:= \{\textrm{diag}(\zeta,\ldots,\zeta,\zeta^{-n})\colon$} \\ {\scriptsize $\zeta \in \C, \zeta^{2k}=1\}$} \end{tabular}
 & $\Z S^+ \oplus \Z(k\omega_{n-1})$ & $S^+$ & $k \in \N\setminus\{0\}$\\
\hline 
6 & \begin{tabular}{c}$\ge 2$,\\odd\end{tabular} &  \begin{tabular}{c} $\SL(n+1)\times^{Z_{2e}\Sp(n+1)} \C^{n+1}$ \\ {\scriptsize $Z_{2e}$ defined as in case 4,} \\ {\scriptsize $\zeta\mathrm{Id} \in Z_{2e}$ acts on $\C^{n+1}$}\\ {\scriptsize as the multiplication by $\zeta^{r}$} \end{tabular} &
\begin{tabular}{c}$\<\alpha_2+\alpha_3, \alpha_3+\alpha_4,$\\ $\ldots,\alpha_{n-1}+\alpha_n,$\\
$ e\omega_{n-1},$ \\ $r\omega_{n-1} + \omega_n \>_{\Z}$ \end{tabular}& $S^+$ & \begin{tabular}{c}
$e|\frac{n+1}{2}$;\\ $0\leq r$ \\ $\leq e-1$ 
\end{tabular} \\
\hline 
\end{longtable}

\begin{proof}[Proof of Corollary~\ref{cor:SL_varieties}]
We first show that the varieties in Table~\ref{table:SL} do have the lattices specified in the table. Then we will prove that each variety of the table is $G$-saturated. By the ``only if'' part of Proposition~\ref{prop:SL} and Losev's Theorem~\ref{thm:losev}, this will finish the proof.

For the three $\SL(2)$-varieties the claims above are well known, so we only deal with the remaining varieties.

To avoid confusion, during the proof we denote by $X$ the smooth affine variety having $G$-saturated weight monoid, and such that $\Z\Gamma(X)$ is the one given in the table, and we denote by $Y$ instead of $X$ the ``candidate'' variety given in the second column of the table. By Remark~\ref{rem:Gsatinvars} and the proof of Proposition~\ref{prop:SL}, the set $\Sigma^N(X)$ is the one given in the fifth column of the table.

Let us consider the first remaining case, namely case~4, for the group $G=\SL(n+1)$ where $n\geq 2$. The lattice $\Z\Gamma(X)$ is $2\<\alpha_2,\alpha_3,\ldots,\alpha_n,d\omega_n\>_{\Z}$ where $d|(n+1)$. Notice that $\Z\Gamma(X)$ is contained in the lattice $2\<\alpha_2,\alpha_3,\ldots,\alpha_n,\omega_n\>_{\Z}$, which is $2\wl$, and the corresponding quotient has order $d$. Moreover $\Z\Gamma(X)$ contains $2\<\alpha_2,\alpha_3,\ldots,\alpha_n,(n+1)\omega_n\>_{\Z}$, which is $2\rl$.

The variety $Y$ given in the table is $G$-homogeneous, and the stabilizer indicated in the table (call it $H$) contains $\SO(n+1)$ and is contained in $N_G(\SO(n+1))$. More precisely, the quotient $N_G(\SO(n+1))/\SO(n+1)$ is cyclic of order $n+1$: it is generated by the class of the matrix $\zeta_0\textrm{Id}$ (where $\zeta_0$ is a primitive $(n+1)$-th root of unity) if $n+1$ is odd, and by the class of the matrix $\text{diag}(\xi,\ldots,\xi,-\xi)$ (where $\xi$ is a primitive $2(n+1)$-th root of unity) if $n+1$ is even. The quotient $H/\SO(n+1)$ is the subgroup of $N_G(\SO(n+1))/\SO(n+1)$ of order $d$.

Set $Y_0=G/\SO(n+1)$ and $Y_1=G/N_G(\SO(n+1))$. Recall that if $Z$ is an affine spherical variety then $\Z\Gamma(Z)$ is the lattice of $B$-eigenvalues of $B$-eigenvectors of $\C(Z)$. Then $\Z\Gamma(Y_0)\supset \Z\Gamma(Y)\supset \Z\Gamma(Y_1)$, and, by \cite[Lemma~2.4]{gandini-spherorbclos}, the quotient $\Z\Gamma(Y_0)/\Z\Gamma(Y)$ is isomorphic to the character group of $H/\SO(n+1)$, so it is cyclic of order $d$.

On the other hand it is well-known that $\Z\Gamma(Y_0)=2\Lambda$ (see e.g.\ \cite[Tabelle~1]{kramer}), and $\Z\Gamma(Y_1)=2\Lambda_R$ (because $\Z\Gamma(Y_1)=\Z\Sigma^N(Y_1)$ and $\Sigma^N(Y_1)=2S$, see e.g.\ \cite[Section~5.2]{luna-typeA}). This implies that $\Z\Gamma(Y_0)\supset \Z\Gamma(X)\supset \Z\Gamma(Y_1)$.

The desired equality $\Z\Gamma(X)=\Z\Gamma(Y)$ follows, because both quotients $\Z\Gamma(Y_0)/\Z\Gamma(Y)$ and $\Z\Gamma(Y_0)/\Z\Gamma(X)$ are quotients of the cyclic group $\Z\Gamma(Y_0)/\Z\Gamma(Y_1)$ and have the same order.

We discuss case~5, with the lattice $\Z S^+ \oplus \Z(k\omega_{n-1})$ where $k \in \N\setminus{0}$, for $G=\SL(n+1)$ where $n$ is even. The proof goes similarly as the previous case: we set $Y_0=G/\Sp(n)$ and $Y_1=G/N_G(\Sp(n))$, and we observe that the fibers of the natural map $Y_0\to Y$ have cardinality $k$, hence this is also the index of $\Z\Gamma(Y)$ in $\Z\Gamma(Y_0)$.

We have that $\Z S^+=\Z\Gamma(Y_1)$, because $\Z\Gamma(Y_1)=\Z\Sigma^N(Y_1)$ and $\Sigma^N(Y_1)=S^+$ by \cite[Section~3,~Case~31]{bravi-pezzini-reductive}, therefore $\Z S^+\subset \Z\Gamma(Y)$. Now $\Z\Gamma(Y_0)$ has a unique subgroup that contains $S^+$ and is of index $k$, namely $\Z\Gamma(X)$. The desired equality $\Z\Gamma(X)=\Z\Gamma(Y)$ follows.

Let us consider case~6. The lattice $\Z\Gamma(X)$ is $\<\alpha_2+\alpha_3, \alpha_3+\alpha_4,\ldots,\alpha_{n-1}+\alpha_n, e\omega_{n-1}, r\omega_{n-1} + \omega_n \>_{\Z}$ where $e|\frac{n+1}{2}$ and $0\leq r \leq e-1$, for $G=\SL(n+1)$ where $n$ is odd. Here again $\Sigma^N(X)=S^+$.

The variety $Y$ is here $\SL(n+1)\times^{Z_{2e}\Sp(n+1)}\C^{n+1}$, where $\textrm{diag}(\zeta)\in Z_{2e}$ acts on $\C^{n+1}$ as the multiplication by $\zeta^{r}$. Set $Y_0=\SL(n+1)\times^{\Sp(n+1)} \C^{n+1}$. It is well known that $\Z\Gamma(Y_0)$ is the weight lattice of $G$ and that $\Sigma^N(Y_0)=S^+$, see e.g.\ \cite[Section~3.3]{luna-model}. The map
\[
\begin{array}{cccc}
\pi\colon &Y_0=\SL(n+1)\times^{\Sp(n+1)} \C^{n+1} & \to & Y=\SL(n+1)\times^{Z_{2e}\Sp(n+1)} \C^{n+1} \\
&  [g,v] & \mapsto & [g,v]
\end{array}
\]
is surjective and $G$-equivariant, and its generic fibers have cardinality equal to the index of $\Z\Gamma(X)$ in $\Z\Gamma(Y_0)$.

Let us define $B$-eigenvectors of $\C(Y_0)$ of $B$-eigenvalues $\omega_n$ and $\omega_{n-1}$. Denote by $H$ the stabilizer in $\SL(n+1)$ of the point $[e_G,e_{n+1}]\in Y$, where $e_G$ is the neutral element of $G$ and $(e_i)_{i\in\{1,\ldots,n+1\}}$ is the standard basis of $\C^{n+1}$. Then the open $G$-orbit of $Y_0$ is isomorphic to the homogeneous space $G/H$. The map $gH\mapsto g_{n+1,n+1}$, where $g=(g_{i,j})_{i,j\in\{1,\ldots,n+1\}}\in G$, is a $B$-eigenvector of $\C(G/H)$, hence of $\C(Y_0)$, of $B$-eigenvalue $\omega_{n}$. On the other hand, a $B$-eigenvector of $\C[Y_0]$ of $B$-eigenvalue $\omega_{n-1}$ is the map
\[
[g,v]\mapsto \textrm{Pf}\left(
\begin{array}{cc}
a_{n,n} & a_{n,n+1} \\
a_{n+1,n} & a_{n+1,n+1} \\
\end{array}
\right)
\]
where $a=g J ({}^tg)$. Then it is elementary to check that the functions having weights resp.\ $e\omega_{n-1}$ and $r\omega_{n-1}+\omega_n$ descend to $B$-semiinvariant rational functions on $Y$. Moreover, since $\pi\colon Y_0\to Y$ is finite, we have $\Sigma^N(Y)=\Sigma^N(Y_0)$ up to replacing some elements with positive rational multiples. But Table~\ref{table:scspher} and the fact that $\Sigma^N(Y_0)=S^+$ imply that $\Sigma^N(Y)=S^+$. We conclude that $\alpha_i+\alpha_{i+1}$ is in $\Z\Gamma(Y)$ for all $i\in \{2,\ldots,n-1\}$.

It follows that $\Z\Gamma(Y)\supset\Z\Gamma(X)$, and since they are two sublattices of same index in $\Z\Gamma(Y_0)$, they are equal.

To finish the proof, it remains to prove that $Y$ is $G$-saturated, where $Y$ is any of the last three cases of the table. We use Proposition~\ref{prop:G-saturatedness}, and we must check that $\Sigma^N(Y)$ does not contain any simple root, and that $Y$ has no $G$-stable prime divisor.

For cases~4 and~5, recall that we have defined a homogeneous variety $Y_1$ such that $\Sigma^N(Y_1)$ doesn't contain any simple root. Then $Y_1$ is $G$-saturated by \loccit, and $Y$ too, because there exists a surjective $G$-equivariant finite map $Y\to Y_1$ as we have seen. We also deduce that $X=Y$, whence $\Sigma^N(Y)$ is indeed the set indicated in the fifth column of the table.

Finally, we consider case~6. Here we have defined a variety $Y_0$ such that $\Sigma^N(Y_0)=S^+$. The variety $Y_0$ has no $G$-stable prime divisor: indeed, a $G$-stable prime divisor of $Y_0$ would intersect the subset $V=\{ [e_G,v]\colon v\in\C^{n+1}\}$ of $Y_0$ in a $\Sp(n+1)$-stable prime divisor of $V\cong \C^{n+1}$, which is absurd.

Then $Y_0$ is $G$-saturated, and so is $Y$ because of the surjective $G$-equivariant finite map $\pi\colon Y_0\to Y$ that we have defined. We have already shown that $\Sigma^N(Y)=S^+$, so the proof is complete.
\end{proof}

\subsection{Other types}

We now classify the smooth and $G$-saturated affine spherical varieties of full rank, when $G$ is a simply connected simple group not of type $\sA$.

\begin{proposition} \label{prop:MonoidsOtherTypes}
Suppose $G$ is a simply connected simple group of type different from $\sA$. A $G$-saturated submonoid $\wm$ of $\dw$ of full rank is a smooth weight monoid if and only if one of the following holds:
\begin{enumerate}
\item $2\rl \inn \Z\wm \inn 2\wl$;
\item $G$ is of type $\sB_n$ with $n \ge 2$ and $\<\alpha_1+\alpha_2, \alpha_2+\alpha_3, \ldots, \alpha_{n-1}+\alpha_n, 2\alpha_n\>_{\Z} \inn \Z\wm \inn \<\om_1, \om_2, \ldots,\om_{n-1},2\om_n\>_{\Z}$; or
\item $G$ is of type $\sC_n$ with $n\ge 2$ and $\Z\wm = \wl$.
\end{enumerate}
\end{proposition}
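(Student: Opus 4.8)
The plan is to follow the proof of Proposition~\ref{prop:SL}: first pin down the possible sets $\Sigma^N(\wm)$, then translate the conditions of Theorem~\ref{thm:G_sat_char} into constraints on $\Z\wm$, and finally reverse the argument for the ``if'' direction. For the ``only if'' direction, suppose $\wm$ is smooth, $G$-saturated and of full rank. Then $S^p(\wm)=\emptyset$ and $\Sigma^N(\wm)\inn 2S\cup S^+$ by Lemma~\ref{lem:SpSigmaN_GSat_FullRank}, and since $G$ is simple its Dynkin diagram is connected, so $\supp(\Sigma^N(\wm))=S$ by Lemma~\ref{lem:Gsat_two_simple_roots}(\ref{lemma_item_1:Gsat_two_simple_roots}); thus every simple root is either doubled or lies in a used edge-sum. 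I would first establish the dichotomy that either (I) $\Sigma^N(\wm)=2S$ or (II) $S^+\inn\Sigma^N(\wm)$: if no edge-sum is an N-spherical root then every node is doubled, giving (I); otherwise, beginning from a single edge-sum in $\Sigma^N(\wm)$, the propagation in Lemma~\ref{lem:Gsat_three_simple_roots}(\ref{lemma_item_2:Gsat_three_simple_roots}), combined with the exclusions of Lemmas~\ref{lem:Gsat_two_simple_roots}(\ref{lemma_item_3:Gsat_two_simple_roots}) and \ref{lem:Gsat_three_simple_roots}(\ref{lemma_item_1:Gsat_three_simple_roots}), forces all edge-sums into $\Sigma^N(\wm)$, giving (II). Inspecting the bonds then shows that in case (II) the only possible extra root is $2\alpha$ for a short end-node $\alpha$ joined to its unique neighbor by a double bond, which occurs only in type $\sB_n$ (the node $\alpha_n$).

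In case (I) the analysis is uniform across types. By Proposition~\ref{prop:adapnsphroots}(\ref{item:osiginlat}) and (\ref{item:corooteven}), $\Sigma^N(\wm)=2S$ is equivalent to $2\rl\inn\Z\wm$ together with $\<\alpha^\vee,\gamma\>\in 2\Z$ for all $\alpha\in S$ and all $\gamma\in\Z\wm$, i.e.\ to $2\rl\inn\Z\wm\inn 2\wl$; one checks that no edge-sum then lies in $\Z\wm$, so indeed $\Sigma^N(\wm)=2S$, and that $S_{\wm}=\emptyset$, because positive-definiteness of the Cartan matrix rules out a nonzero $\nu=\sum_{\alpha\in F}c_\alpha\alpha^\vee$ with all $c_\alpha>0$ and $\<\nu,\alpha\>\le 0$ for every $\alpha\in F$. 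Hence Theorem~\ref{thm:G_sat_char} holds vacuously and every such $\wm$ is smooth; this is case~(1), valid for all types.

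Case (II) is where the types separate, and the main obstacle is the computation of the localization set $S_{\wm}$ of Proposition~\ref{prop:localizroots}. Because $\Z\wm$ has full rank, each $\alpha^\vee|_{\Z\wm}$ is simply $\alpha^\vee$ read in $\Hom_\Z(\wl,\Q)$, so $S_{\wm}$ depends only on $\Sigma^N(\wm)$ and is found by a finite, type-by-type linear-programming computation on the cone $\{\nu:\<\nu,\sigma\>\le 0 \text{ for all } \sigma\in\Sigma^N(\wm)\}$. The decisive question is whether $S_{\wm}$ contains two non-orthogonal simple roots; if it does, $\wm$ cannot be smooth by Lemma~\ref{lem:Gsatsmooth_Sgamma}(\ref{lemma_item_Sgamma_A1}). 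I expect this to hold precisely for the excluded types: for $\sD_n$ and $\sE_n$ a branch node forces adjacent roots into $S_{\wm}$, as do the computations for $\sF_4$, $\sG_2$, and for $\sB_n$ when $2\alpha_n\notin\Sigma^N(\wm)$ (all of which have $\Sigma^N(\wm)=S^+$); whereas for $\sC_n$ and for $\sB_n$ with $2\alpha_n\in\Sigma^N(\wm)$ the set $S_{\wm}$ is pairwise orthogonal ($S_{\wm}=\emptyset$ in the latter case, a set of alternating short roots for $\sC_n$). Verifying these assertions over the finite list of Dynkin types is the combinatorial heart of the proof.

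Separating the types this way, the lattice descriptions follow. For $\sB_n$ one necessarily has $2\alpha_n\in\Sigma^N(\wm)$, and Proposition~\ref{prop:adapnsphroots}(\ref{item:osiginlat}),(\ref{item:corooteven}) give $\<\alpha_1+\alpha_2,\ldots,\alpha_{n-1}+\alpha_n,2\alpha_n\>_\Z\inn\Z\wm\inn\{\lambda:\<\alpha_n^\vee,\lambda\>\in 2\Z\}=\<\om_1,\ldots,\om_{n-1},2\om_n\>_\Z$; as $S_{\wm}=\emptyset$, every lattice in this interval is smooth, which is case~(2). For $\sC_n$, $\Sigma^N(\wm)=S^+$ forces $\Z S^+\inn\Z\wm$, and then condition~(\ref{cond:partofbasis}) of Theorem~\ref{thm:G_sat_char}, applied to the alternating coroots $S_{\wm}$ via Lemma~\ref{lem:part_of_basis}, forces $\Z\wm=\wl$ (the $\sC$-analogue of Lemma~\ref{lem:SL_lattices}(\ref{lemma_item_SL_lattices_2})), which is case~(3). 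The ``if'' direction is then handled exactly as in the proof of Proposition~\ref{prop:SL}: for each lattice in (1)--(3) one computes $\Sigma^N(\wm)$ from Proposition~\ref{prop:adapnsphroots} and $S_{\wm}$ from Proposition~\ref{prop:localizroots} and verifies the three conditions of Theorem~\ref{thm:G_sat_char}. Throughout one must remember the coincidence $\sB_2\cong\sC_2$, under which the type-$\sC_2$ model lattice $\Z\wm=\wl$ is exactly the configuration $\Sigma^N(\wm)=S^+$ that is excluded for type $\sB_2$.
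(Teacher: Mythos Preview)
Your proposal is correct and follows essentially the same route as the paper: pin down $\Sigma^N(\wm)$, read off the lattice constraints from Proposition~\ref{prop:adapnsphroots}, and for type~$\sC_n$ use condition~(\ref{cond:partofbasis}) of Theorem~\ref{thm:G_sat_char} together with Lemma~\ref{lem:part_of_basis} to force $\Z\wm=\wl$. The paper packages the determination of $\Sigma^N(\wm)$ into a separate Lemma~\ref{lem:Sigma_N_other_types_full_rank}, proved type by type, rather than as a global dichotomy, and this is worth keeping in mind: your ``propagation from a single edge-sum'' via Lemma~\ref{lem:Gsat_three_simple_roots}(\ref{lemma_item_2:Gsat_three_simple_roots}) requires $\<\beta^\vee,\alpha\>=-1$, which fails across certain multiple bonds (e.g.\ going from $\alpha_{n-1}+\alpha_n$ back to $\alpha_{n-2}+\alpha_{n-1}$ in type~$\sC_n$), so one really does need the auxiliary exclusions and the $\supp(\Sigma^N(\wm))=S$ argument you mention, and the cleanest way to organize this is the paper's explicit case analysis. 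One small imprecision: for $\sC_n$ with $n$ odd, $S_\wm=\{\alpha_i:i\text{ odd}\}$ contains the long root $\alpha_n$, so ``alternating short roots'' is not quite right; only the inclusion $\{\alpha_i:i\text{ odd}\}\subset S_\wm$ is needed for the argument, and that is what the paper uses.
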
 
The proof of this proposition will be given on page \pageref{proof:prop:MonoidsOtherTypes}
\begin{remark}
The lattices $\Z\wm$ satisfying $2\rl \inn \Z\wm \inn 2\wl$ are in natural bijective correspondence with the subgroups of the quotient $2\wl/2\rl \cong \wl/\rl$. 
For all simply connected simple groups $G$, the quotient $\wl/\rl$ can be found in \cite[Planches I-IX]{bourbaki-geadl47}: 
\begin{longtable}{|c|c|}
\hline
Type of $G$ & $\wl/\rl$ \\
\hline
$\sB_n, n\ge 2$ & $\Z/2\Z$ \\
\hline
$\sC_n, n\ge 3$ & $\Z/2\Z$ \\
\hline
$\sD_n, n\ge 4$ even & $\Z/2\Z \times \Z/2\Z$ \\
\hline
$\sD_n, n\ge 5$ odd & $\Z/4\Z$ \\
\hline
$\sE_6$ & $\Z/3\Z$ \\
\hline
$\sE_7$ & $\Z/2\Z$ \\
\hline
$\sE_8$ & $\{0\}$ \\
\hline
$\sF_4$ & $\{0\}$ \\
\hline
$\sG_2$ & $\{0\}$ \\
\hline
\end{longtable}
\end{remark}

\begin{remark}
In Table~\ref{table:othertypes} and in the proof of Corollary~\ref{cor:other_varieties} below, the orthogonal group is defined as the stabilizer of the symmetric bilinear form given by the matrix $J$ that has entries equal to $1$ on the skew diagonal, and zeros elsewhere. We denote by $(e_i)_{i\in\{1,\ldots,2n\}}$ the standard basis of $\C^{2n}$ (with $n\geq 4$), and if $n$ is even then we define the subgroup $\SO(n)\times \SO(n)$ of $\SO(2n)$ as the stabilizer of the subspace of $\C^{2n}$ generated by $e_{\frac n2+1},e_{\frac n2+2},\ldots,e_{\frac{3n}2}$. If $n$ is odd, we define $\SO(n)\times \SO(n)$ as the stabilizer of any subspace of $\C^{2n}$ where the above bilinear form is nondegenerate. For the exceptional groups appearing in homogeneous spaces written e.g.\ as $\frac{\sE_6}{\sC_4}$, the numerator refers to the simply connected simple group, and the Dynkin types appearing in the denominator refer to connected semisimple subgroups of the corresponding type.
\end{remark}

\begin{corollary}\label{cor:other_varieties}
Let $G$ be a simply connected simple group of type different from $\sA$. Every $G$-saturated smooth affine spherical $G$-variety of full rank is $G$-equivariantly isomorphic to a (unique) $G$-variety in Table~\ref{table:othertypes}.
\end{corollary}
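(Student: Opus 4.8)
The plan is to follow the same three-step strategy as in the proof of Corollary~\ref{cor:SL_varieties}. By Proposition~\ref{prop:MonoidsOtherTypes}, a $G$-saturated submonoid $\wm \inn \dw$ of full rank is a smooth weight monoid if and only if its lattice $\Z\wm$ is one of those described in the three cases of that proposition; since $\wm$ is $G$-saturated we have $\wm = \Z\wm \cap \dw$, so $\wm$ is completely determined by $\Z\wm$. By Losev's Theorem~\ref{thm:losev}, each such $\wm$ is the weight monoid of at most one smooth affine spherical $G$-variety up to $G$-equivariant isomorphism. Hence it suffices to exhibit, for every lattice allowed by Proposition~\ref{prop:MonoidsOtherTypes}, a smooth, $G$-saturated affine spherical $G$-variety $Y$ appearing in Table~\ref{table:othertypes} whose lattice $\Z\Gamma(Y)$ equals that lattice; uniqueness and exhaustiveness then follow formally.

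The first substantial step is to verify, for each candidate $Y$ in the table, that $\Z\Gamma(Y)$ is the lattice claimed. As in the type-$\sA$ argument I would compute these lattices by relating $Y$ to a ``largest'' homogeneous model $Y_0$ (a symmetric space or a suitable representation) whose weight lattice and spherical roots are recorded in the literature. For the lattices satisfying $2\rl \inn \Z\wm \inn 2\wl$ (case (1), present for every type) the relevant model is a symmetric space $G/K_0$ with $\Sigma^N(G/K_0) = 2\sr$, for which $\Z\Gamma(G/K_0) = 2\wl$ and $\Z\Gamma(G/N_G(K_0)) = 2\rl$ can be read off from the classification of symmetric spaces, e.g.\ \cite{kramer}; each intermediate lattice is then realized by a central quotient, exactly as the subgroups $Z_d$ produce the lattices in case~4 of Table~\ref{table:SL}, and the index is computed via the character-group identity of \cite[Lemma~2.4]{gandini-spherorbclos}. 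Here the subgroups of $\wl/\rl$ vary with the type (cyclic of order $1,2,3,4$, or the Klein four group, according to the table in the remark following Proposition~\ref{prop:MonoidsOtherTypes}), so case~(1) in general contributes several varieties. For the type-$\sB_n$ lattices of case~(2) I would use a model with $S^+$-type spherical roots and compute the interval of lattices directly, while for case~(3), $G$ of type $\sC_n$ with $\Z\wm = \wl$, the candidate is the model variety of type $\sC$, whose existence and invariants are recalled from \cite{charwm_arxivv2} and the references quoted at the start of Section~\ref{sec:G_sat_full_rank}.

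The second step is to prove that each candidate $Y$ is $G$-saturated, using the criterion of Proposition~\ref{prop:G-saturatedness}: one must check that $\Sigma^N(Y)$ contains no simple root and that $Y$ has no $G$-stable prime divisor. The first condition is immediate from the computed sets ($2S$ in case~(1), the $S^+$-type roots in case~(2), and the spherical roots of the $\sC$-model in case~(3)), none of which is a simple root. For the divisor condition I would argue as in the type-$\sA$ proof: either $Y$ is built from a representation, where the absence of a $G$-stable divisor can be checked directly on a slice, or there is a finite surjective $G$-equivariant morphism $Y \to Y_1$ onto a homogeneous space $Y_1$ already known to be $G$-saturated; since such a morphism preserves the weight lattice up to finite index and, by Table~\ref{table:scspher}, forces $\Sigma^N(Y) = \Sigma^N(Y_1)$ (these being equal up to positive rational multiples, hence equal), the property descends.

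The main obstacle I anticipate is the case-by-case bookkeeping in the lattice computations of the second paragraph: one must identify explicit homogeneous models across the types $\sB, \sC, \sD, \sE, \sF, \sG$, extract their weight lattices and spherical roots from the several references used above, and match every subgroup of $\wl/\rl$ with a concrete central quotient realizing the corresponding sublattice of $2\wl$. The only genuinely non-cyclic situation, $\sD_n$ with $n$ even, requires distinguishing the two index-two subgroups of $\Z/2\Z \times \Z/2\Z$ and exhibiting a separate variety for each; everything else reduces to the cyclic constructions already illustrated in Table~\ref{table:SL}.
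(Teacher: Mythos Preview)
Your proposal follows essentially the same strategy as the paper's own proof: reduce via Proposition~\ref{prop:MonoidsOtherTypes} and Theorem~\ref{thm:losev} to exhibiting, for each admissible lattice, a $G$-saturated smooth affine spherical variety from the table with that lattice, then verify the lattice using a homogeneous ``anchor'' variety together with the index identity of \cite[Lemma~2.4]{gandini-spherorbclos}, and check $G$-saturatedness via Proposition~\ref{prop:G-saturatedness}.

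Two small corrections. First, for $G$ of type $\sD_n$ with $n$ even the group $\wl/\rl\cong(\Z/2\Z)\times(\Z/2\Z)$ has \emph{three} subgroups of order~$2$, not two; the paper accordingly produces three distinct varieties (cases~10, 11, 12), and to tell their lattices apart it does not rely on index arguments alone but writes down explicit $B$-semiinvariant rational functions of weights $2\omega_1$ and $2\omega_n$ on $\SO(2n)$ invariant under the relevant subgroups. Second, in your type-$\sB_n$ remark the relevant model has $\Sigma^N=S^+\cup\{2\alpha_n\}$, not $S^+$ alone; the paper's anchor varieties here are $\SO(2n+1)/\GL(n)$ and $\SO(2n+1)/N(\GL(n))$, whose invariants are taken from \cite{bravi-pezzini-reductive} rather than \cite{kramer}.
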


\begin{longtable}{|c|c|c|c|c|}
\caption{$G$-saturated smooth affine spherical $G$-varieties of full rank with $G$ simple and not of type $\sA$.}
\label{table:othertypes}\\
\hline
Case & $G$ & $X$ & $\Z\wm(X)$ & $\Sigma^N(X)$ \\
\hline
1 & $\Spin(2n+1), n\ge 2$ & \raisebox{-8pt}{\rule{0pt}{22pt}}$\frac{\SO(2n+1)}{\mathrm{S}(\mathrm{O}(n+1)\times\mathrm{O}(n))}$  & $2\rl$ &  $2S$ \\
\hline 
2 & $\Spin(2n+1), n\ge 2$ & \raisebox{-8pt}{\rule{0pt}{22pt}}$\frac{\SO(2n+1)}{\SO(n+1)\times\SO(n)}$  & $2\wl$ & $2S$ \\
\hline
3 & $\Spin(2n+1), n\ge 2$ & \raisebox{-8pt}{\rule{0pt}{22pt}}$\frac{\SO(2n+1)}{N(\GL(n))}$  & $\Z(S^+ \cup \{2\alpha_n\})$ &  $S^+ \cup \{2\alpha_n\}$ \\
\hline 
4 & $\Spin(2n+1), n\ge 2$ & \raisebox{-8pt}{\rule{0pt}{22pt}}$\frac{\SO(2n+1)}{\GL(n)}$  & $\<\om_1, \om_2, \ldots,\om_{n-1},2\om_n\>_{\Z}$ & $S^+ \cup \{2\alpha_n\}$ \\
\hline
5 & $\Sp(2n), n\ge 3$ & \raisebox{-8pt}{\rule{0pt}{22pt}}$\frac{\Sp(2n)}{N(\GL(n))}$ & $2\rl$ & $2S$ \\
\hline
6 & $\Sp(2n), n\ge 3$ & \raisebox{-8pt}{\rule{0pt}{22pt}}$\frac{\Sp(2n)}{\GL(n)}$ & $2\wl$ & $2S$ \\
\hline 
7 & $\Sp(2n), n\ge 2$ &
\begin{tabular}{c}
$\Sp(2n) \times ^{\Sp(2a)\times\Sp(2b)} \C^{2b}$\\
{\scriptsize $a=b=n/2$} \\
{\scriptsize if $n$ is even,}\\
{\scriptsize $a=b-1=(n-1)/2$}\\
{\scriptsize if $n$ is odd}
\end{tabular} & $\wl$ & $S^+$ \\
\hline
8 & $\Spin(2n),n \ge 4$ & \raisebox{-8pt}{\rule{0pt}{22pt}}$\frac{\SO(2n)}{N(\SO(n)\times\SO(n))}$ & $2\rl$ & $2S$ \\
\hline
9 & $\Spin(2n),n \ge 4$ & \raisebox{-8pt}{\rule{0pt}{22pt}}$\frac{\SO(2n)}{\SO(n)\times\SO(n)}$ & $2\wl$ & $2S$ \\
\hline
10 & $\Spin(2n),n \ge 4$ & \raisebox{-8pt}{\rule{0pt}{22pt}}$\frac{\SO(2n)}{\mathrm{S}(\mathrm{O}(n)\times\mathrm{O}(n))}$ & \begin{tabular}{c}$2\<\alpha_1,\ldots,\alpha_{n-2},\alpha_{n},$ \\ $\omega_1\>_\Z$ \end{tabular} & $2S$ \\
\hline
11 & $\Spin(2n),n \ge 4$ even & \begin{tabular}{c}\raisebox{-8pt}{\rule{0pt}{22pt}}$\frac{\SO(2n)}{\<A\cdot(\SO(n)\times\SO(n))\>}$, \\ {\scriptsize where $Ae_i=-e_{n-i+1}$,} \\ {\scriptsize $Ae_{\frac n2+i}=e_{\frac n2-i+1}$,} \\ {\scriptsize $Ae_{n+i}=e_{2n-i+1}$,} \\ {\scriptsize $Ae_{\frac{3n}2 +i}=-e_{\frac{3n}2 -i+1}$} \\ {\scriptsize  for all $i\in\{1,\ldots,n/2\}$} \end{tabular}& \begin{tabular}{c}$2\<\alpha_1,\ldots,\alpha_{n-2},\alpha_{n},$ \\ $\omega_n\>_\Z$ \end{tabular} & $2S$ \\
\hline
12 & $\Spin(2n),n \ge 4$ even & \raisebox{-8pt}{\rule{0pt}{22pt}}$\frac{\SO(2n)}{\<A\cdot\mathrm{S}(\mathrm{O}(n)\times\mathrm{O}(n))\>}$ & \begin{tabular}{c} $2\<\alpha_1,\ldots,\alpha_{n-2},\alpha_{n},$ \\ $\omega_1+\omega_n\>_\Z$ \end{tabular} & $2S$ \\
\hline
13 & $\sE_6$ & \raisebox{-8pt}{\rule{0pt}{22pt}}$\frac{\sE_6}{Z(\sE_6)\sC_4}$ & $2\rl$ & $2S$ \\
\hline
14 & $\sE_6$ & \raisebox{-8pt}{\rule{0pt}{22pt}}$\frac{\sE_6}{\sC_4}$ & $2\wl$ & $2S$ \\ 
\hline
15 & $\sE_7$ & \raisebox{-8pt}{\rule{0pt}{22pt}}$\frac{\sE_7}{N(\sA_7)}$ & $2\rl$ & $2S$ \\
\hline
16 & $\sE_7$ & \raisebox{-8pt}{\rule{0pt}{22pt}}$\frac{\sE_7}{\sA_7}$ & $2\wl$ & $2S$ \\
\hline
17 & $\sE_8$ & \raisebox{-8pt}{\rule{0pt}{22pt}}$\frac{\sE_8}{\sD_8}$ & $2\wl = 2\rl$ & $2S$ \\
\hline
18 & $\sF_4$ & \raisebox{-8pt}{\rule{0pt}{22pt}}$\frac{\sF_4}{\sC_3 \times \sA_1}$ & $2\wl = 2\rl$ & $2S$ \\
\hline
19 & $\sG_2$ & \raisebox{-8pt}{\rule{0pt}{22pt}}$\frac{\sG_2}{\sA_1 \times \sA_1}$ & $2\wl = 2\rl$ & $2S$ \\
\hline
\end{longtable}

\begin{proof}[Proof of Corollary~\ref{cor:other_varieties}]
We proceed as in the proof of Corollary~\ref{cor:SL_varieties}. Again, during the proof we denote by $X$ the smooth affine variety having $G$-saturated weight monoid, and such that $\Z\Gamma(X)$ is the one given in the table, and we denote by $Y$ instead of $X$ the ``candidate'' variety given in the third column of the table. By the proof of Proposition~\ref{prop:MonoidsOtherTypes}, the set $\Sigma^N(X)$ is the one given in the last column of the table. For the cases where the variety $Y$ is homogeneous, let us denote by $G_0$ and $H_0$ the groups and subgroups specified in the third column of the table such that $Y=G_0/H_0$; notice that $G_0$ is an isogenous quotient of $G$.

Let us consider the entries of the table where the variety $Y$ is homogeneous, and the corresponding stabilizer $H_0$ is equal to the normalizer $N_{G_0}(H^\circ_0)$ of its connected component $H^\circ_0$ containing the neutral element. They are cases 1, 3, 5, 8, 13, 15, 17, 18, 19.

These varieties are given in the paper \cite{bravi-pezzini-reductive} (they are, respectively, cases 9 with $p=q$, 33, 13, 15 with $p=q$, 21, 25, 27, 29, 30 of \loccit) together with the set $\Sigma^N(Y)$ which is equal to $\Sigma^N(X)$. Moreover, for such varieties we have that $\Z\Gamma(Y)$ is equal to the lattice generated by $\Sigma^N(Y)$ (see \loccit), and this is precisely $\Z\Gamma(X)$. Notice that for these cases $Y$ is $G$-saturated thanks to Proposition~\ref{prop:G-saturatedness}, since it is homogeneous and $\Sigma^N(Y)$ does not contain simple roots.

Let us consider cases 2, 4, 6, 9, 10 with $n$ odd, 14, 16. They are all homogeneous, and for each of them the variety $Y_1=G_0/N_{G_0}(H^\circ_0)$ appears as one of the cases considered before. We have seen that $Y_1$ is $G$-saturated, whence $Y$ has this property too. In all cases one checks that $\Z\Gamma(X)$ contains $\Z\Gamma(Y_1)$, and that the index of $H_0$ in $N_{G_0}(H^\circ_0)$ is equal to the index of $\Z\Gamma(Y_1)$ in $\Z\Gamma(X)$. We conclude that $\Z\Gamma(Y_1)$ has the same index in $\Z\Gamma(X)$ and in $\Z\Gamma(Y)$ by \cite[Lemma~2.4]{gandini-spherorbclos}. Using this fact, one checks using Proposition~\ref{prop:MonoidsOtherTypes} together with the fact that $Y$ is smooth and $G$-saturated that the only possibility for $\Z\Gamma(Y)$ is $\Z\Gamma(X)$. We underline that this argument is valid for case 10 with $n$ odd because the quotient $2\Lambda/2\Lambda_R$ is cyclic, and the lattice of case 10 (with $n$ odd) corresponds to the unique subgroup of order $2$ of this quotient.

We turn to the cases 10 with $n$ even, 11, and 12. They are similar to case 10 with $n$ odd, but here $2\Lambda/2\Lambda_R\cong (\Z/2\Z)\times (\Z/2\Z)$, and the three cases under consideration correspond to the three subgroups of order $2$ of this quotient.

Since we treat these varieties together, let us rename then as $Y_1=\SO(2n)/H_1$, $Y_2=\SO(2n)/H_2$, and $Y_3=\SO(2n)/H_3$, where $H_1= \mathrm{S}(\mathrm{O}(n)\times\mathrm{O}(n))$, $H_2 = \<A\cdot(\SO(n)\times\SO(n))\>$, and $H_3=\<A\cdot\mathrm{S}(\mathrm{O}(n)\times\mathrm{O}(n))\>$.

Notice that $N_G(\SO(n)\times\SO(n))/(\SO(n)\times\SO(n))$ is isomorphic to $(\Z/2\Z)\times (\Z/2\Z)$, and is generated by the classes of $A$ and of an element in $\mathrm{S}(\mathrm{O}(n)\times\mathrm{O}(n))\setminus (\SO(n)\times\SO(n))$. By \cite[Lemma~2.4]{gandini-spherorbclos}, the lattice $\Z\Gamma(Y_i)$ contains $\Z\Gamma(\SO(2n)/N_G(\SO(n)\times\SO(n)))$ as a sublattice of index $2$, and is a sublattice of index $2$ of $\Z\Gamma(\SO(2n)/(\SO(n)\times\SO(n)))$. To check that the lattice $\Z\Gamma(Y_i)$ is the one indicated in the table, it is enough to exhibit a rational function on $\SO(2n)$ that is $H_i$-invariant by right multiplication, and $B$-semiinvariant of weight $2\omega_1$ and $2\omega_n$ for resp.\ $i=1$ and $i=2$. 

A function for $i=1$ with the desired properties is the one that associates to $g\in \SO(2n)$ the value $v(g)\cdot J \cdot {}^tv(g)$, where $v(g)$ is obtained from the last row of $g$ setting to $0$ the first $n/2$ and the last $n/2$ entries. A function for $i=2$ is the one that associates to $g\in\SO(2n)$ the $n\times n$-minor given by the last $n$ rows and the middle $n$ columns of the matrix $g$.

Finally, it remains to discuss the only case in the table where $Y$ is not homogeneous, namely case 7. It is well known that $\Z\Gamma(Y)=\Lambda$ and that $\Sigma^N(Y)=S^+$, see e.g.\ \cite[Section~3.3]{luna-model}. The variety $Y$ has no $G$-stable prime divisors, by the same argument used for the variety $\SL(n+1)\times^{\Sp(n+1)}\C^{n+1}$ in the proof of Corollary~\ref{cor:SL_varieties}. Then $Y$ is $G$-saturated, again by Proposition~\ref{prop:G-saturatedness}, and the proof is complete.
\end{proof}

\begin{lemma} \label{lem:Sigma_N_other_types_full_rank}
Let $G$ be a simple group of type different from $\sA$ and let $\wm \inn \dw$ be a $G$-saturated submonoid of full rank. If $\wm$ is smooth then one of the following holds:
\begin{enumerate}[(a)]
\item $\SNWM = 2S$;
\item $G$ is of type $\sC_n$ with $n\ge 2$ and $\SNWM = S^+$; or
\item $G$ is of type $\sB_n$ with $n\ge 2$ and $\SNWM = S^+ \cup \{2\alpha_n\}$.
\end{enumerate}
\end{lemma}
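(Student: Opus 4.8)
The plan is to read off the coarse shape of $\SNWM$ from the restrictions collected in Section~\ref{subsec:restrictions}, and then to discard the configurations that the full smoothness criterion of Theorem~\ref{thm:G_sat_char} forbids. Since $\rk\Z\wm=\rk\wl$, Lemma~\ref{lem:SpSigmaN_GSat_FullRank} already gives $S^p(\wm)=\emptyset$ and $\SNWM\subseteq 2S\cup S^+$. Moreover, as $G$ is simple of rank $\ge 2$, every simple root is non-orthogonal to some other simple root and hence lies in $\supp(\SNWM)$ by Lemma~\ref{lem:Gsat_two_simple_roots}(a). I would then split on whether $\SNWM\cap S^+$ is empty. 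If it is, then each simple root, lying in the support, must occur in an element of $2S$; thus $2\alpha\in\SNWM$ for every $\alpha\in\sr$ and $\SNWM=2S$, which is case~(a).

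If instead $\SNWM\cap S^+\neq\emptyset$, I would propagate membership along the simple bonds of the diagram: by Lemma~\ref{lem:Gsat_three_simple_roots}(b), if $\alpha+\beta\in\SNWM$ with $\alpha,\beta$ joined by a simple bond, then $\beta+\gamma\in\SNWM$ for every further neighbour $\gamma$ of $\beta$. Using that the diagram is connected, together with coverage of all simple roots and with Lemma~\ref{lem:Gsat_three_simple_roots}(a) (to forbid that a node adjacent to the multiple bond be covered by a doubled root), this forces $S^+\subseteq\SNWM$. Next, Lemma~\ref{lem:Gsat_two_simple_roots}(b), applied at each simple bond (where the relevant coroot value is $-1$), removes every $2\alpha$ except possibly the one at the short end of the multiple bond, i.e. $2\alpha_n$ in type $\sB$; in type $\sC$ the coroot value $\<\alpha_n^\vee,\alpha_{n-1}\>$ is again $-1$, so $2\alpha_n$ is excluded as well. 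Hence in this case one is left with $\SNWM=S^+$, or, only in type $\sB$, $\SNWM=S^+\cup\{2\alpha_n\}$.

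The remaining and hardest step is to show that $\SNWM=S^+$ is incompatible with smoothness for every type except $\sC_n$, and that for $\sB_n$ with $n\ge 3$ the root $2\alpha_n$ is in fact forced. For this I would compute the localization $S_\wm$ of Proposition~\ref{prop:localizroots} type by type and feed it into Theorem~\ref{thm:G_sat_char}. In types $\sG_2$, $\sF_4$, $\sB_n$ $(n\ge 3)$ and $\sD_4$ one exhibits a small subset of $\sr$ containing two non-orthogonal simple roots whose restricted coroots admit a strictly positive combination lying in the cone $\{\nu:\<\nu,\sigma\>\le 0\text{ for all }\sigma\in S^+\}$; this forces $S_\wm$ to contain adjacent roots, contradicting Lemma~\ref{lem:Gsatsmooth_Sgamma}(a) (while for $\sB_n$ this contradiction disappears precisely once $2\alpha_n$ is added, which is why that element must belong to $\SNWM$). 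The genuinely delicate cases are the fork types $\sD_n$ $(n\ge 5)$ and $\sE_6,\sE_7,\sE_8$: here the localization collapses onto the (pairwise orthogonal) fork leaves, so Lemma~\ref{lem:Gsatsmooth_Sgamma}(a) is silent and one must instead invoke the integrality conditions (a) and (b) of Theorem~\ref{thm:G_sat_char}, arguing that the two fork coroots cannot be completed to a basis of $(\Z\wm)^{*}$ compatibly with $\SNWM=S^+$. I expect this fork analysis to be the main obstacle, since it is exactly the point where the purely combinatorial restrictions of Section~\ref{subsec:restrictions} no longer suffice and the lattice-theoretic part of the smoothness criterion must be used. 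Finally, for $\sC_n$ the same computation yields $S_\wm=\{\alpha_1,\alpha_3,\dots\}$ with all three conditions satisfiable, so $\SNWM=S^+$ survives as case~(b); the rank-two coincidence $\sB_2=\sC_2$ is subsumed under case~(b) as well.
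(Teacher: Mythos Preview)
Your overall strategy matches the paper's, but the step you flag as ``the main obstacle'' rests on a miscomputation of $S_\wm$. You claim that for $\sD_n$ ($n\ge 5$) and $\sE_6,\sE_7,\sE_8$ with $\SNWM=S^+$ the localization $S_\wm$ consists only of the pairwise orthogonal fork leaves, so that Lemma~\ref{lem:Gsatsmooth_Sgamma}(\ref{lemma_item_Sgamma_A1}) is silent. This is false: in each of these types $S_\wm$ contains a set of four simple roots including an \emph{adjacent} pair, and the same contradiction via Lemma~\ref{lem:Gsatsmooth_Sgamma}(\ref{lemma_item_Sgamma_A1}) works as in the ``easy'' cases. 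Concretely,
\begin{itemize}
\item for $\sD_n$ ($n\ge 4$) one checks $\bigl\langle\alpha_{n-3}^{\vee}+\alpha_{n-2}^{\vee}+\alpha_{n-1}^{\vee}+\alpha_n^{\vee},\,\sigma\bigr\rangle\le 0$ for all $\sigma\in S^+$, so $\{\alpha_{n-3},\alpha_{n-2},\alpha_{n-1},\alpha_n\}\subseteq S_\wm$, and $\alpha_{n-3}\not\perp\alpha_{n-2}$;
\item for $\sE_6,\sE_7,\sE_8$ one checks $\bigl\langle\alpha_{2}^{\vee}+\alpha_{3}^{\vee}+\alpha_{4}^{\vee}+\alpha_5^{\vee},\,\sigma\bigr\rangle\le 0$ for all $\sigma\in S^+$, so $\{\alpha_2,\alpha_3,\alpha_4,\alpha_5\}\subseteq S_\wm$, and e.g.\ $\alpha_3\not\perp\alpha_4$.
\end{itemize}
Thus no lattice-theoretic argument via conditions~(\ref{cond:partofbasis}) and~(\ref{cond:sumisNsphericalroot}) of Theorem~\ref{thm:G_sat_char} is needed here, and indeed it is not clear your proposed alternative would go through: if $S_\wm$ really were just two orthogonal leaves, neither condition~(\ref{cond:partofbasis}) nor~(\ref{cond:sumisNsphericalroot}) would obviously fail.

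A second, smaller point: your propagation of $S^+$ across the double bond in type $\sC_n$ via Lemma~\ref{lem:Gsat_three_simple_roots}(\ref{lemma_item_2:Gsat_three_simple_roots}) alone does not quite work, since that lemma requires $\langle\beta^{\vee},\alpha\rangle=-1$ but $\langle\alpha_{n-1}^{\vee},\alpha_n\rangle=-2$. The paper handles this by first using Lemma~\ref{lem:Gsat_three_simple_roots}(\ref{lemma_item_1:Gsat_three_simple_roots}) (contrapositively) to get $2\alpha_{n-2}\notin\SNWM$ from $\alpha_{n-1}+\alpha_n\in\SNWM$, then invoking $\alpha_{n-2}\in\supp(\SNWM)$, and only afterwards applying Lemma~\ref{lem:Gsat_three_simple_roots}(\ref{lemma_item_2:Gsat_three_simple_roots}) along the simply-laced part. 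You gesture at ``coverage'' and Lemma~\ref{lem:Gsat_three_simple_roots}(\ref{lemma_item_1:Gsat_three_simple_roots}), so you may have this in mind, but it should be made explicit.
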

\begin{proof}
By Lemma~\ref{lem:Gsat_two_simple_roots}(\ref{lemma_item_1:Gsat_two_simple_roots}) and Lemma~\ref{lem:SpSigmaN_GSat_FullRank}(\ref{lemma_item:SigmaN_GSat_FullRank}) we know that 
$\supp(\SNWM) = S$ and $\SNWM \inn S^+ \cup 2S$.
We first suppose that the Dynkin diagram of $G$ is simply laced (i.e. $G$ is of type $\sD_n$, $\sE_6$, $\sE_7$ or $\sE_8$). It follows from repeated applications of Lemma~\ref{lem:Gsat_two_simple_roots}(\ref{lemma_item_3:Gsat_two_simple_roots}) and Lemma~\ref{lem:Gsat_three_simple_roots}(\ref{lemma_item_1:Gsat_three_simple_roots}) that if $2\alpha_1 \in \SNWM$, then $\SNWM = 2S$. 
On the other hand, if $2\alpha_1 \notin \SNWM$, then $\alpha_1+\alpha_2 \in \SNWM$ and it then follows from the same lemmas that $\SNWM = S^+$. 
We claim that Lemma~\ref{lem:Gsatsmooth_Sgamma}(\ref{lemma_item_Sgamma_A1}) implies that $\SNWM = S^+$ is not possible. We show this by contradiction: we suppose that $\SNWM = S^+$ and show that $S_{\wm}$ contains two non-orthogonal simple roots. Indeed,
\begin{itemize}
\item[-] if $G$ is of type $\sD_n$ with $n\ge 4$, then one checks that $\<\alpha_{n-3}^{\vee}+\alpha_{n-2}^{\vee}+\alpha_{n-1}^{\vee} + \alpha_n^{\vee},\sigma\> \le 0$ for all $\sigma \in S^+$ and consequently $\{\alpha_{n-3}, \alpha_{n-2}, \alpha_{n-1}, \alpha_n\} \inn S_{\wm}$;
\item[-] if $G$ is of type $\sE_6, \sE_7, \sE_8$ then one checks that 
$\<\alpha_2^{\vee}+\alpha_3^{\vee}+\alpha_4^{\vee}+\alpha_5^{\vee},\sigma\> \leq 0$ for all $\sigma \in S^+$ and consequently $\{\alpha_{2}, \alpha_{3}, \alpha_{4}, \alpha_5\} \inn S_{\wm}$.
\end{itemize} This proves the Lemma for groups with a simply laced Dynkin diagram. 

Next, suppose that $G$ is of type $\sC_n$ with $n\ge 3$. If $2\alpha_n \in \SNWM$, then it follows as above that $\SNWM = 2S$. On the other hand if $2\alpha_n \notin \SNWM$, then we know that $\alpha_{n-1} + \alpha_n \in \SNWM$. Then it follows from Lemma~\ref{lem:Gsat_three_simple_roots}(\ref{lemma_item_1:Gsat_three_simple_roots}) that $2\alpha_{n-2} \notin \SNWM$. Consequently $\alpha_{n-2}+\alpha_{n-1} \in \SNWM$ if $n=3$ and $\alpha_{n-2}+\alpha_{n-1} \in \SNWM$ or $\alpha_{n-3}+\alpha_{n-2} \in \SNWM$ if $n\ge 4$, since $\alpha_{n-2} \in \supp(\SNWM)$. By Lemma~\ref{lem:Gsat_three_simple_roots}(\ref{lemma_item_2:Gsat_three_simple_roots}) this implies that $\alpha_{n-2}+\alpha_{n-1} \in \SNWM$ and $\alpha_{n-3}+\alpha_{n-2} \in \SNWM$ when $n\ge 4$. Continuing to apply Lemma~\ref{lem:Gsat_three_simple_roots}(\ref{lemma_item_2:Gsat_three_simple_roots}) we deduce that $S^+ \inn \SNWM$. It then follows from  $(n-2)$ applications of Lemma~\ref{lem:Gsat_two_simple_roots}(\ref{lemma_item_3:Gsat_two_simple_roots}) that $\SNWM \cap 2S = \emptyset$ and therefore $\SNWM = S^+$. We have proven the Lemma for groups of type $\sC_n$ with $n\ge 3$

Now, we suppose that $G$ is of type $\sB_n$ with $n \ge 3$. If $2\alpha_1 \in \SNWM$, then it follows as before that $\SNWM = 2S$. If $2\alpha_1 \notin \SNWM$, then $\alpha_1 + \alpha_2 \in \SNWM$ and it follows by $(n-2)$ applications of Lemma~\ref{lem:Gsat_three_simple_roots}(\ref{lemma_item_2:Gsat_three_simple_roots}) that $S^+ \inn \SNWM$. In turn, this implies by $(n-2)$ applications of Lemma~\ref{lem:Gsat_two_simple_roots}(\ref{lemma_item_3:Gsat_two_simple_roots}) that $\SNWM \cap 2S \inn \{2\alpha_n\}$. 
Lemma~\ref{lem:Gsatsmooth_Sgamma}(\ref{lemma_item_Sgamma_A1}) rules out the possibility $\SNWM = S^+$. Indeed, one checks that $\<\alpha_{n-2}^{\vee}+\alpha_{n-1}^{\vee}+\alpha_n^{\vee}, \sigma\> \leq 0$ for all $\sigma \in S^+$. This means that if we had $\SNWM = S^+$, we would have $\{\alpha_{n-2},\alpha_{n-1},\alpha_n\} \inn S_{\wm}$, which contradicts  Lemma~\ref{lem:Gsatsmooth_Sgamma}(\ref{lemma_item_Sgamma_A1}). This shows the Lemma for groups of type $\sB_n$ with $n \ge 3$.  

Next we suppose that $G$ is of type $\sB_2$. If $2\alpha_1 \in \SNWM$, then $\alpha_1+\alpha_2 \notin \SNWM$ by Lemma~\ref{lem:Gsat_two_simple_roots}(\ref{lemma_item_3:Gsat_two_simple_roots}) and consequently $\SNWM = 2S$. On the other hand if $2\alpha_1 \notin \SNWM$, then $\alpha_1+\alpha_2 \in \SNWM$ and consequently $\SNWM =\{\alpha_1+\alpha_2\}$ or $\SNWM = \{\alpha_1+\alpha_2, 2\alpha_2\}$. Since $\sB_2 \cong \sC_2$, we have proven the Lemma for groups of type $\sB_n$ and $\sC_n$ with $n \ge 2$.

We turn to the case where $G$ is of type $\sF_4$. If $2\alpha_1 \in \SNWM$ then by Lemma~\ref{lem:Gsat_two_simple_roots}(\ref{lemma_item_3:Gsat_two_simple_roots}) and Lemma~\ref{lem:Gsat_three_simple_roots}(\ref{lemma_item_1:Gsat_three_simple_roots}) we have that $\alpha_1+\alpha_2, \alpha_2+\alpha_3 \notin \SNWM$. Since $\alpha_2 \in \supp(\SNWM)$ it follows that $2\alpha_2 \in \SNWM$ and again by Lemma~\ref{lem:Gsat_three_simple_roots}(\ref{lemma_item_1:Gsat_three_simple_roots}) that $\alpha_3 + \alpha_4 \notin \SNWM$. Consequently $\SNWM = 2S$. On the other hand, if $2\alpha_1 \notin \SNWM$, then $\alpha_1+\alpha_2 \in \SNWM$ and by Lemma~\ref{lem:Gsat_three_simple_roots}(\ref{lemma_item_2:Gsat_three_simple_roots}) $\alpha_2 + \alpha_3 \in \SNWM$. This implies by Lemma~\ref{lem:Gsat_two_simple_roots}(\ref{lemma_item_3:Gsat_two_simple_roots}) and Lemma~\ref{lem:Gsat_three_simple_roots}(\ref{lemma_item_1:Gsat_three_simple_roots}) that $2\alpha_2 \notin \SNWM$ and $2\alpha_4 \notin \SNWM$. Since $\alpha_4 \in \supp(\SNWM)$, we deduce that $\alpha_3+\alpha_4 \in \SNWM$ and from 
Lemma~\ref{lem:Gsat_two_simple_roots}(\ref{lemma_item_3:Gsat_two_simple_roots}) we obtain that $2\alpha_3 \notin \SNWM$. We have shown that if $2\alpha_1 \notin \SNWM$, then $\SNWM = S^+$, but this is impossible by Lemma~\ref{lem:Gsatsmooth_Sgamma}(\ref{lemma_item_Sgamma_A1}) since $\<\alpha_1^{\vee}+ \alpha_2^{\vee} + \alpha_3^{\vee}, \sigma\>=0$ for all $\sigma \in S^+$. This proves the Lemma for $G$ of type $\sF_4$. 

Finally, when $G$ is of type $\sG_2$, it follows from Lemma~\ref{lem:Gsat_two_simple_roots} that $\SNWM = S^+$ or $\SNWM=2S$. The former is not possible by Lemma~\ref{lem:Gsatsmooth_Sgamma}(\ref{lemma_item_Sgamma_A1}) since $\<\alpha_1^{\vee}+\alpha_2^{\vee}, \alpha_1+\alpha_2\> = 0$ and therefore $\{\alpha_1,\alpha_2\} \in S_{\wm}$. This finishes the proof of the Lemma. 
\end{proof}

\begin{proof}[Proof of Proposition~\ref{prop:MonoidsOtherTypes}] \label{proof:prop:MonoidsOtherTypes}
We begin by proving ``$\Rightarrow$''. Assume that $\wm$ is smooth. It follows from Lemma~\ref{lem:Sigma_N_other_types_full_rank} that we have to consider three possibilities for $\SNWM$. If $\SNWM = 2S$ then it follows from (\ref{item:osiginlat}) and (\ref{item:corooteven}) in  Proposition~\ref{prop:adapnsphroots} that $2\rl \inn \Z\wm \inn 2\wl$. 

The second possibility we have to consider is that $G$ is of type $\sC_n$ with $n\ge 2$ and $\SNWM = S^+$. One checks that $\<\sum_{i \text{ odd}} \alpha_i^{\vee}, \sigma\> \leq 0$ for all $\sigma \in S^+$. Consequently, $\{\alpha_i \colon i \text { odd}\} \inn S_{\wm}$. It follows from Theorem~\ref{thm:G_sat_char}(\ref{cond:partofbasis}) that 
\begin{equation}\{\alpha_i^{\vee}|_{\Z\wm} \colon i \text { odd}\} \text{ is part of a basis of } \Z\wm^*. \label{eq:oddbasis}
\end{equation} 
By Proposition~\ref{prop:adapnsphroots}(\ref{item:osiginlat}) we know that $S^+ \inn \Z\wm$. A simple computation shows that $\Z S^+ $ contains $\<\om_k \colon k \text{ even}\>_{\Z}$ and so $\{\om_k \colon k \text{ even}\} \inn \Z\wm$. Since $\{\om_k \colon k \text{ even}\}$ is part of a basis of $\wl$, it is part of a basis of $\Z\wm$. We choose $F \inn \Z\wm$ so that $F \cup \{\om_k \colon k \text{ even}\}$ is a basis of $\Z\wm$. Without loss of generality, we may assume that $F \inn \<\om_i \colon i \text{ odd}\>_{\Z}$. 
Using Lemma~\ref{lem:part_of_basis} it now follows from \eqref{eq:oddbasis} that $\Z F = \<\om_i \colon i \text{ odd}\>_{\Z}$ and therefore that $\Z\wm = \wl$.

The third and final possibility is that $G$ is of type $\sB_n$ with $n\ge 2$ and $\SNWM = S^+ \cup \{2\alpha_n\}$. Then it follows from  (\ref{item:osiginlat}) and (\ref{item:corooteven}) in  Proposition~\ref{prop:adapnsphroots} that $\Z(S^+ \cup \{2\alpha_n\}) \inn \Z\wm \inn \<\om_1,\om_2,\ldots,\om_{n-1},2\om_n\>_{\Z}$. This finishes the proof of the implication ``$\Rightarrow$.''

We now prove the other implication ``$\Leftarrow$.'' This is a straightforward application of Theorem~\ref{thm:G_sat_char}. Indeed, if $2\rl \inn \Z\wm \inn 2\wl$, then it follows from Proposition~\ref{prop:adapnsphroots} that $\SNWM = 2S$. One computes that $S_{\wm} = \emptyset$     and it follows that $\wm$ is smooth. Similarly, if $G$ is of type $\sB_n$ with $n\ge 2$ and $\<\alpha_1+\alpha_2, \alpha_2+\alpha_3, \ldots, \alpha_{n-1}+\alpha_n, 2\alpha_n\>_{\Z} \inn \Z\wm \inn \<\om_1, \om_2, \ldots,\om_{n-1},2\om_n\>_{\Z}$ then it follows from Proposition~\ref{prop:adapnsphroots} that $\SNWM = S^+ \cup \{2\alpha_n\}$. One computes that $S_{\wm} = \emptyset$ and it again follows that $\wm$ is smooth. Finally, if $G$ is of type $\sC_n$ with $n\ge 2$ and $\Z\wm = \wl$, then $\SNWM = S^+$ and one computes that $S_{\wm} = \{\alpha_i \colon i \text { odd}\}$. It follows that $(S_{\wm}, S^p(\wm), \SNWM \cap \Z S_{\wm}) = (\{\alpha_i \colon i \text { odd}\}, \emptyset, \emptyset)$, which is admissible. Condition~(\ref{cond:partofbasis}) of Theorem~\ref{thm:G_sat_char} is clearly met and condition~(\ref{cond:sumisNsphericalroot}) is trivially met. It follows once again that $\wm$ is smooth. This completes the proof of the Proposition.
\end{proof}

\section{Smooth affine spherical \texorpdfstring{$\SL(2)\times \C^{\times}$}{SL(2)xC*}-varieties} \label{sec:SL_2_times_C_star}

In this section, we will apply the results from \cite{charwm_arxivv2} to classify all smooth affine spherical $\slcst$-varieties, see Theorem~\ref{thm:SL_2_times_C_star} and Corollary~\ref{cor:sl2cst}.

Since we are no longer restricting ourselves to $G$-saturated weight monoids and varieties in this and the next section, we will apply the  general characterization of smooth weight monoids \cite[Theorem 4.2]{charwm_arxivv2}, rather than Theorem~\ref{thm:G_sat_char} above. Because stating the  general characterization requires a substantial number of additional notions from the theory of spherical varieties, we have not included it in this paper. In this section and the next, we have included precise references whenever we appeal to the results in \cite{charwm_arxivv2} in proofs.     

Without the assumption of $G$-saturatedness, the combinatorial description of the set $\SNWM$  becomes a little more involved than in Proposition~\ref{prop:adapnsphroots} because it is now possible for $\SNWM$ to contain simple roots. Before restricting ourselves to $G=\slcst$, we recall this description, which we will also use in Section~\ref{sec:woodward}, from \cite{msfwm}. To do so, we introduce some additional notation: if $\wm$ is a normal submonoid of $\dw$, then we denote
\begin{enumerate}[1.]
\item by $\wm^{\vee}$ the \textbf{dual cone} of $\wm$ in $\Hom_{\Z}(\wm, \Q)$, that is $$\wm^{\vee}:=\{v\in\Hom_{\Z}(\Z\wm,\Q):
  \langle v,\gamma\rangle\geq0\mbox{ for all }\gamma\in\wm\};$$
\item by $E(\wm)$ the set of primitive vectors on the extremal rays of $\wm^{\vee}$, that is $$E(\wm):= \{\delta \in (\Z\wm)^* \colon \delta
\text{ spans an extremal ray of }\wm^\vee \text{ and } \delta \text{ is primitive in } \Z\wm^*\};$$
\item by $a(\alpha)$ the set $\{\delta \in  (\Z\wm)^*\colon \<\delta,\alpha\>=1 \text{
  and } \bigl(\delta
\in E(\wm) \text{ or } \alpha^{\vee}|_{\Z\wm} - \delta \in E(\wm)\bigr)\}$ for $\alpha \in \sr\cap \Z\wm$. 
\end{enumerate}

\begin{proposition}[{\cite[Corollary 2.17]{msfwm}}] \label{prop:adapnsphroots_general}
Let $\wm$ be a normal submonoid of $\dw$. 
If $\sigma \in \Sigma^{sc}(G)$, then $\sigma \in \SNWM$
if and only if all of the following conditions hold:
\begin{enumerate}[(1)]
\item $\sigma \in \Z\wm $; \label{item:inasr1}
\item $\sigma$ is compatible with $S^p(\wm)$, that is
\begin{itemize}
\item[-] if $\sigma=\alpha_1+\ldots+\alpha_n$ with support of type $\mathsf{B}_n$ then
$\{\alpha_2, \alpha_3, \ldots, \alpha_{n-1}\} \inn S^p(\wm)$ and $\alpha_n \notin S^p(\wm)$;
\item[-] if $\sigma=\alpha_1+2(\alpha_2+\ldots+\alpha_{n-1})+\alpha_n$ with support of type $\mathsf{C}_n$ then
$\{\alpha_3, \alpha_4, \ldots, \alpha_n\}
\inn S^p(\wm)$; 
\item[-] if $\sigma$ is any other element of $\Sigma^{sc}(G)$ then 
\(\{\alpha \in \supp(\sigma)\colon \<\alpha^{\vee}, \sigma\> =0\}
\inn S^p(\wm)\);
\end{itemize} \label{item:inasr2}
\item if $\sigma \notin \sr$ and $\delta \in E(\wm)$ such that
  $\<\delta, \sigma\> > 0$ then there exists $\beta \in S\setminus
  S^p(\wm)$ such that $\beta^\vee|_{\Z\wm}$ is a positive multiple of
  $\delta$; \label{item:inasr3}

\item if $\sigma \in \sr$ then   \label{item:inasr4}
\begin{enumerate}[(a)]
\item $a(\sigma)$ has two elements; and    \label{item:inasr4a}
\item $\<\delta, \gamma\> \ge 0$ for all $\delta \in a(\sigma)$ and
  all $\gamma \in \wm$; and     \label{item:inasr4b}
\item $\<\delta, \sigma\> \le 1$ for all $\delta \in E(\wm)$; \label{item:inasr4c}
\end{enumerate}
\item if $\sigma = 2\alpha \in 2\sr$, then $\<\alpha^{\vee}, \gamma\>
\in 2\Z$ for all $\gamma \in \wm$; \label{item:inasr5}
\item if $\sigma = \alpha + \beta$ with $\alpha,\beta \in \sr$ and \label{nsorths}
  $\alpha \perp \beta$, then $\alpha^{\vee} = \beta^{\vee}$ on $\wm$. 
\end{enumerate}
\end{proposition}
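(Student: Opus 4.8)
The plan is to deduce the statement from the combinatorial classification of affine spherical varieties, for which the pair $(\wm(X),\Sigma^N(X))$ is a complete invariant by Losev's uniqueness theorem for (not necessarily smooth) affine spherical varieties \cite{losev-knopconj}. By Definition~\ref{def:N_adapted_spherical_roots}, asserting $\sigma \in \SNWM$ is the same as producing an affine spherical $G$-variety $X$ with $\wm(X)=\wm$ and $\Sigma^N(X)=\{\sigma\}$. I would therefore first reformulate the problem as the existence of a \emph{homogeneous spherical datum} with weight lattice $\Z\wm$, with the set of simple roots of the stabilizing parabolic equal to $S^p(\wm)$ (which is forced, since $S^p(\wm)$ is read off $\wm$), and whose unique spherical root is the genuine spherical root underlying $\sigma$ in the sense of Losev's correspondence between $\Sigma$ and $\Sigma^N$ (see \cite{losev-uniqueness}). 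As $\sigma$ is assumed to lie in $\Sigma^{sc}(G)$, this underlying root is one of the rank-one roots of Table~\ref{table:scspher}, so the datum to be realized has rank one.

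For the forward implication I would start from such an $X$ and read off its invariants. Condition (\ref{item:inasr1}) is immediate, since the root monoid, and hence $\Sigma^N(X)$, lies in $\Z\wm(X)=\Z\wm$. Condition (\ref{item:inasr2}) is the standard compatibility axiom between a spherical root and the set $S^p$, checked type by type on $\supp(\sigma)$. For conditions (\ref{item:inasr3}) and (\ref{item:inasr4}) I would use that each element of $E(\wm)$ arises either from a $G$-invariant valuation, in which case it lies in the valuation cone and so pairs nonpositively with $\sigma$, or from a color, in which case it is a positive multiple of some $\beta^{\vee}|_{\Z\wm}$ with $\beta\in\sr\setminus S^p(\wm)$. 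Thus any $\delta\in E(\wm)$ with $\<\delta,\sigma\> > 0$ must come from a color, which is condition (\ref{item:inasr3}); and when $\sigma$ is itself a simple root, the associated rank-one datum moves exactly two colors, giving the two elements of $a(\sigma)$ and the positivity and boundedness requirements in condition (\ref{item:inasr4}). Conditions (\ref{item:inasr5}) and (\ref{nsorths}) are then the integrality axioms attached to the two special roots $2\alpha$ and $\alpha+\beta$ with $\alpha\perp\beta$.

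For the reverse implication I would assume (\ref{item:inasr1})--(\ref{nsorths}) and build $X$. These conditions are engineered so that $\Z\wm$, $S^p(\wm)$ and $\{\sigma\}$, together with the colors prescribed by $a(\cdot)$ and the $\beta^{\vee}|_{\Z\wm}$, satisfy Luna's axioms for a homogeneous spherical datum; invoking the Luna conjecture (now a theorem) yields a spherical homogeneous space $G/H$ with these invariants. The theory of affine spherical embeddings then provides an affine embedding of $G/H$ whose weight monoid is exactly $\wm$, and a computation of its root monoid confirms $\Sigma^N(X)=\{\sigma\}$, so that $\sigma$ is N-adapted to $\wm$.

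The step I expect to be the main obstacle, in both directions, is the bookkeeping of colors, and in particular the dichotomy between $\sigma\notin\sr$ (condition (\ref{item:inasr3})) and $\sigma\in\sr$ (condition (\ref{item:inasr4})): when $\sigma$ is a simple root its rank-one datum carries two colors with a constrained pair of valuations $\delta$ and $\alpha^{\vee}|_{\Z\wm}-\delta$, and translating this structure faithfully into the combinatorics of $E(\wm)$ and $a(\sigma)$ is delicate. A secondary difficulty is keeping track of Losev's passage between $\Sigma$ and $\Sigma^N$, which is what makes the integrality conditions (\ref{item:inasr5}) and (\ref{nsorths}) surface; since the statement is Corollary~2.17 of \cite{msfwm}, in practice I would obtain it by specializing the general description of all N-adapted roots proved there to the case of a single root.
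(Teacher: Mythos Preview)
The paper does not prove this proposition: it is recalled verbatim from \cite[Corollary~2.17]{msfwm} and used as a black box, so there is no proof in the paper to compare your proposal against. You already acknowledge this at the end of your sketch.

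That said, your outline is broadly the right shape for how the result is obtained in \cite{msfwm}: one translates the condition $\sigma\in\Sigma^N(\wm)$ into the existence of a spherical homogeneous datum with lattice $\Z\wm$, parabolic set $S^p(\wm)$, and spherical root set $\{\sigma\}$ (or its underlying primitive root), and then unwinds Luna's axioms. Your identification of the key dichotomy between $\sigma\notin S$ (where colors are controlled by coroots) and $\sigma\in S$ (where the two colors of type $a$ produce the set $a(\sigma)$) is exactly the crux. One small correction: in your forward argument you say each $\delta\in E(\wm)$ comes either from a $G$-invariant valuation or from a color and is then a positive multiple of some $\beta^\vee|_{\Z\wm}$; this is not quite right when $\sigma\in S$, since the two colors attached to $\sigma$ map to the two elements of $a(\sigma)$, which need not be multiples of coroots. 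This is precisely why condition~(\ref{item:inasr4}) replaces condition~(\ref{item:inasr3}) in that case, and your sketch should keep these two mechanisms for colors separate rather than collapsing them.
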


From now on we will assume that $G=\slcst$ and we will (by abuse of notation) use $T$ for the maximal torus of $\SL(2)$ consisting of diagonal matrices. Our chosen maximal torus of $G$ will be $T\times \C^{\times}$ and as a Borel subgroup of $G$ we will take $\{\left(\begin{smallmatrix} t & a \\ 0 & t^{-1}\end{smallmatrix}\right) \colon t\in \C^{\times},a \in \C\}\times \C^{\times}$. We will use $\om$ for the fundamental weight of $\SL(2)$, which is the highest weight of the defining representation $(\SL(2),\C^2)$. Further we will use $\eps$ for the character of $\C^{\times}$ which is the identity map. The simple root of $\SL(2)$ (and of $G$) will be denoted $\alpha$. As is well known, $\alpha=2\om$. Consequently, the weight lattice, set of simple roots, root lattice and monoid of dominant weights of $G=\slcst$ are:
\[\wl = \<\om,\eps\>_{\Z}, \quad S=\{\alpha\}, \quad \rl=\Z\alpha, \quad \dw = \<\om,\eps,-\eps\>_{\N}.
\]
The simple coroot $\alpha^{\vee}$ in $\Hom_{\Z}(\wl,\Z)$ satisfies
\[\<\alpha^{\vee},\om\>=1 \text{ and } \<\alpha^{\vee}, \eps\>=0.\]

In the next lemma, we specialize Proposition~\ref{prop:adapnsphroots_general} to the case $G=\slcst$, in a form that is convenient for the proof of Theorem~\ref{thm:SL_2_times_C_star}.  
\begin{lemma} \label{lem:sl2cstSigmaN}
Let $G= \slcst$ and let $\wm$ be a normal submonoid of $\dw = \<\om,\eps,-\eps\>_{\N}$. Then the following hold:
\begin{enumerate}[(1)]
\item $\SNWM \in \{\emptyset,\{\alpha\},\{2\alpha\}\}$; \label{lem:sl2cstSigmaN:item1}
\item $\SNWM = \{2 \alpha\}$ if and only if \label{lem:sl2cstSigmaN:item2}
\begin{enumerate}[(i)]
\item $2\alpha \in \Z\wm \inn \<2\om,\eps\>_{\Z}$; and \label{lem:sl2cstSigmaN:item2a}
\item if $\rk \Z\wm = 2$ and $\Q_{\ge 0}\wm \neq \Q_{\ge 0}\dw$ then there exist $u,v,w\in \Z$ with $u>0, w \neq 0$ and $vw\ge 0$ such that $u\alpha+v\eps$ and $w\eps$ are the primitive elements of $\Z\wm$ on the two rays of $\Q_{\ge 0}\wm$; \label{lem:sl2cstSigmaN:item2b}
\end{enumerate}
\item $\SNWM = \{\alpha\}$  \label{lem:sl2cstSigmaN:item3}
if and only if there exists $\lambda \in \wm$ 
such that
\begin{enumerate} [(i)]
\item $\<\alpha^{\vee},\lambda\> > 0$; \label{lem:sl2cstSigmaN:item3a}
\item $\Z\wm = \Z\alpha \oplus \Z\lambda$; and \label{lem:sl2cstSigmaN:item3b}
\item $\Q_{\ge 0}\wm = \<\lambda,\gamma\>_{\Q_{\ge 0}}$ where  \label{lem:sl2cstSigmaN:item3c}
$\gamma=\<\alpha^{\vee},\lambda\>\alpha-\lambda$, or $\gamma=\alpha$, or $\gamma=a\alpha + b \lambda$ for some $a,b \in \N \setminus\{0\}$ with $\gcd(a,b)=1$. 
\end{enumerate}
\end{enumerate}
\end{lemma}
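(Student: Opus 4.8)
The plan is to read the membership conditions off Proposition~\ref{prop:adapnsphroots_general} and translate them into the coordinates $\wl=\<\om,\eps\>_{\Z}$. Consulting Table~\ref{table:scspher}, the only elements of $\N\sr=\N\alpha$ of spherically closed type are $\alpha$ and $2\alpha$, so $\Sigma^{sc}(G)=\{\alpha,2\alpha\}$ and hence $\SNWM\inn\{\alpha,2\alpha\}$. Writing $\gamma=c\om+d\eps$ we have $\<\alpha^{\vee},\gamma\>=c$, so $\alpha\in S^p(\wm)$ would force $\wm\inn\<\eps,-\eps\>_{\N}$, hence $\alpha,2\alpha\notin\Z\wm$ and $\SNWM=\emptyset$; thus when analysing membership we may assume $S^p(\wm)=\emptyset$, so condition~(\ref{item:inasr2}) is automatic for both candidates. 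For $\sigma=2\alpha$ the surviving conditions are (\ref{item:inasr1}), (\ref{item:inasr3}), (\ref{item:inasr5}); for $\sigma=\alpha$ they are (\ref{item:inasr1}) together with (\ref{item:inasr4a})--(\ref{item:inasr4c}); conditions (\ref{nsorths}) are vacuous. Once (\ref{lem:sl2cstSigmaN:item1}) is proved, assertions (\ref{lem:sl2cstSigmaN:item2}) and (\ref{lem:sl2cstSigmaN:item3}) reduce to the membership statements $2\alpha\in\SNWM$ and $\alpha\in\SNWM$.

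For (\ref{lem:sl2cstSigmaN:item1}) I would show $\alpha$ and $2\alpha$ cannot both be N-adapted. Assume they are. From $2\alpha\in\SNWM$, condition~(\ref{item:inasr5}) makes $\<\alpha^{\vee},\cdot\>$ take even values on $\Z\wm$, so $\alpha^{\vee}|_{\Z\wm}=2\eta$ with $\eta\in(\Z\wm)^*$. From $\alpha\in\SNWM$, condition~(\ref{item:inasr4a}) gives $|a(\alpha)|=2$. Since $\<\alpha^{\vee},\alpha\>=2$, the map $\delta\mapsto\alpha^{\vee}|_{\Z\wm}-\delta$ sends $a(\alpha)$ to itself (both elements pair to $1$ with $\alpha$, and the clause ``$\delta\in E(\wm)$ or $\alpha^{\vee}|_{\Z\wm}-\delta\in E(\wm)$'' is preserved); as its only possible fixed point is $\eta$, a two-element set must be a single $2$-cycle $\{\delta,\alpha^{\vee}|_{\Z\wm}-\delta\}$ with distinct entries, and, relabelling if needed, $\delta\in E(\wm)$. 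Applying condition~(\ref{item:inasr3}) for $2\alpha$ to this $\delta$ (note $\<\delta,2\alpha\>=2>0$) shows $\alpha^{\vee}|_{\Z\wm}$ is a positive multiple of $\delta$; comparing values at $\alpha$ forces $\alpha^{\vee}|_{\Z\wm}=2\delta$, i.e.\ $\delta=\eta$, whence $\alpha^{\vee}|_{\Z\wm}-\delta=\eta=\delta$, contradicting distinctness. This proves $\SNWM\in\{\emptyset,\{\alpha\},\{2\alpha\}\}$.

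For (\ref{lem:sl2cstSigmaN:item2}) note that (\ref{item:inasr1}) and (\ref{item:inasr5}) together say exactly $2\alpha\in\Z\wm\inn\<2\om,\eps\>_{\Z}$, which is (\ref{lem:sl2cstSigmaN:item2a}); it remains to see that under this hypothesis (\ref{item:inasr3}) is equivalent to (\ref{lem:sl2cstSigmaN:item2b}), a direct computation of $E(\wm)$. If $\rk\Z\wm=1$, or if $\rk\Z\wm=2$ and $\Q_{\ge 0}\wm=\Q_{\ge 0}\dw$, then $\wm^{\vee}$ is a single ray spanned by a positive multiple of $\alpha^{\vee}|_{\Z\wm}$, so (\ref{item:inasr3}) holds automatically and (\ref{lem:sl2cstSigmaN:item2b}) is vacuous. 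If $\rk\Z\wm=2$ and $\Q_{\ge 0}\wm$ is a proper pointed cone, then $\wm^{\vee}$ has two extremal generators $\delta_1,\delta_2$; since $-\alpha\notin\Q_{\ge 0}\dw\supseteq\Q_{\ge 0}\wm$, not both $\<\delta_i,\alpha\>$ can be nonpositive, so some $\delta_i$ satisfies $\<\delta_i,\alpha\>>0$ and (\ref{item:inasr3}) forces it to be a positive multiple of $\alpha^{\vee}|_{\Z\wm}$ (and the other to pair nonpositively with $\alpha$). As $\ker\alpha^{\vee}|_{\Z\wm}=\Q\eps$, one generator of $\wm^{\vee}$ being proportional to $\alpha^{\vee}$ is, by duality, equivalent to one ray of $\Q_{\ge 0}\wm$ being spanned by a multiple of $\eps$; the sign bookkeeping on the second generator becomes the inequality $vw\ge 0$, which is (\ref{lem:sl2cstSigmaN:item2b}).

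The main work is (\ref{lem:sl2cstSigmaN:item3}). Here $\alpha\in\SNWM$ forces, via (\ref{item:inasr4a}), that $\rk\Z\wm=2$ (in rank $1$ there is at most one $\delta$ with $\<\delta,\alpha\>=1$) and that $\alpha$ is primitive in $\Z\wm$; completing $\alpha$ to a basis $\{\alpha,\lambda\}$ of $\Z\wm$, with dual basis $\alpha^*,\lambda^*$, yields (\ref{lem:sl2cstSigmaN:item3b}), and taking $\lambda$ to be the appropriate ray generator gives (\ref{lem:sl2cstSigmaN:item3a}). As in (\ref{lem:sl2cstSigmaN:item2}), $\Q_{\ge 0}\wm$ must be pointed and two-dimensional, so it has exactly two primitive ray generators, which by normality lie in $\wm$; I would write them as $\lambda$ and $\gamma$ in the basis $\{\alpha,\lambda\}$ and impose (\ref{item:inasr4a})--(\ref{item:inasr4c}), the binding constraints being $\<\delta,\alpha\>\le 1$ on $E(\wm)$ and $a(\alpha)\inn\wm^{\vee}$. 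I expect the hardest step to be showing these leave exactly the three shapes for $\gamma$ in (\ref{lem:sl2cstSigmaN:item3c}): $\gamma=\<\alpha^{\vee},\lambda\>\alpha-\lambda$, $\gamma=\alpha$, and $\gamma=a\alpha+b\lambda$ with $a,b\in\N\setminus\{0\}$ coprime. For the converse I would verify case by case on these three $\gamma$ that (\ref{item:inasr1}) and (\ref{item:inasr4a})--(\ref{item:inasr4c}) all hold: in each case $\alpha^*\in E(\wm)$ and $a(\alpha)=\{\alpha^*,\ \alpha^{\vee}|_{\Z\wm}-\alpha^*\}$ has exactly two elements, both lying in $\wm^{\vee}$, while $\<\delta,\alpha\>\le 1$ on $E(\wm)$ is immediate.
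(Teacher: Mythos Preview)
Your proposal is correct and follows the same overall route as the paper: specialise Proposition~\ref{prop:adapnsphroots_general} to $\Sigma^{sc}(G)=\{\alpha,2\alpha\}$ and translate conditions (\ref{item:inasr1})--(\ref{item:inasr5}) into the two--dimensional geometry of $\Q_{\ge 0}\wm$ and $\wm^{\vee}$.

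Two points of comparison. For part~(\ref{lem:sl2cstSigmaN:item1}) your argument via condition~(\ref{item:inasr5}) works but is more elaborate than needed: the paper simply observes that $\alpha\in\SNWM$ forces, through $|a(\alpha)|=2$, the existence of some $\delta\in E(\wm)$ with $\<\delta,\alpha\>>0$ and $\delta\notin\Q_{>0}\alpha^{\vee}|_{\Z\wm}$, which immediately violates condition~(\ref{item:inasr3}) for $\sigma=2\alpha$; no appeal to the parity condition~(\ref{item:inasr5}) is required. For the forward direction in part~(\ref{lem:sl2cstSigmaN:item3}), you assert that $\lambda$ can be taken to be simultaneously a ray generator of $\Q_{\ge 0}\wm$ and a completion of $\alpha$ to a $\Z$--basis of $\Z\wm$, but do not say why these two requirements are compatible. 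The paper handles this explicitly: it first extracts $\delta\in E(\wm)\cap a(\alpha)$ with $\delta\neq\tfrac12\alpha^{\vee}|_{\Z\wm}$, then takes $\lambda$ to be the primitive element of $\Z\wm$ on the ray $\{\delta=0\}\cap\Q_{\ge 0}\wm$. The relations $\<\delta,\alpha\>=1$ and $\<\delta,\lambda\>=0$ then force $\{\alpha,\lambda\}$ to be a basis of $\Z\wm$, and $\delta\not\propto\alpha^{\vee}|_{\Z\wm}$ together with $\lambda\in\dw$ gives $\<\alpha^{\vee},\lambda\>>0$. Your remark ``as in~(\ref{lem:sl2cstSigmaN:item2})'' for why $\Q_{\ge 0}\wm$ is a pointed $2$--cone is also too quick: the clean reason is that when $\Q_{\ge 0}\wm=\Q_{\ge 0}\dw$ one has $|E(\wm)|=1$ and hence $|a(\alpha)|\le 1$, contradicting~(\ref{item:inasr4a}). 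With these clarifications your sketch matches the paper's proof.
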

\begin{proof}
We begin with assertion~(\ref{lem:sl2cstSigmaN:item1}). Since $G$ has a root system of type $\sA_1$ it is clear from Definition~\ref{def:scspherrootsG} that $\Sigma^{sc}(G)=\{\alpha,2\alpha\}$. Since $\SNWM \inn \Sigma^{sc}(G)$, all we have to show is that if $\alpha \in \SNWM$ then $2\alpha \notin \SNWM$. If $\alpha \in \SNWM$ then it follows from (\ref{item:inasr4a}) in Proposition~\ref{prop:adapnsphroots_general} that there is $\delta \in E(\wm)$ with $\<\delta,\alpha\>>0$ and $\delta \notin \Q_{>0}\alpha^{\vee}|_{\Z\wm}$. By (\ref{item:inasr3}) of the same proposition this implies that $2\alpha \notin \SNWM$. 

We move to assertion~(\ref{lem:sl2cstSigmaN:item2}). Observe that (\ref{lem:sl2cstSigmaN:item2a}) holds if and only if (\ref{item:inasr1}) and (\ref{item:inasr5}) of Proposition~\ref{prop:adapnsphroots_general} hold and that (\ref{lem:sl2cstSigmaN:item2b}) is equivalent to (\ref{item:inasr3}) of that Proposition. Condition (\ref{item:inasr2}) of the Proposition is trivial in this case.  

We now prove assertion~(\ref{lem:sl2cstSigmaN:item3}). We begin by showing that the conditions (\ref{lem:sl2cstSigmaN:item3a}), (\ref{lem:sl2cstSigmaN:item3b}) and (\ref{lem:sl2cstSigmaN:item3c}) are sufficient for $\alpha$ to belong to $\SNWM$. 
Condition (\ref{item:inasr1}) of Proposition~\ref{prop:adapnsphroots_general} follows from (\ref{lem:sl2cstSigmaN:item3b}) and Condition (\ref{item:inasr2}) of the proposition is trivial. To verify condition (\ref{item:inasr4}) of the proposition, let $\{\alpha^{\#},\lambda^{\#}\}$ be the basis of $\Z\wm^*$ that is dual to the basis $\{\alpha,\lambda\}$ of $\Z\wm$. Then it is clear that $\alpha^{\#}$ is an element of $E(\wm)$ and of $a(\alpha)$. Furthermore 
\begin{equation}
\alpha^{\vee}|_{\Z\wm} = 2\alpha^{\#} + \<\alpha^{\vee},\lambda\>\lambda^{\#}
\end{equation}     
and therefore $\alpha^{\vee}|_{\Z\wm}-\alpha^{\#} = \alpha^{\#} + \<\alpha^{\vee},\lambda\>\lambda$ is a second element of $a(\alpha)$. By (\ref{lem:sl2cstSigmaN:item3c}), the set $E(\wm)$ contains two elements, and we now determine the element $\delta'$ of $E(\wm)$ that is not $\alpha^{\#}$: 
\begin{itemize}
\item[-] if $\gamma = \<\alpha^{\vee}, \lambda\>\alpha-\lambda$, then $\delta' = \alpha^{\vee}|_{\Z\wm}-\alpha^{\#}$;
\item[-] if $\gamma = \alpha$, then $\delta' = \lambda^{\#}$;
\item[-] if $\gamma = a\alpha+b\lambda$ with $a,b \in \N\setminus\{0\}$ and $\gcd(a,b)=1$ then $\delta' = -b \alpha^{\#} + a \lambda^{\#}$. 
\end{itemize}
One readily checks in all three cases that condition~(\ref{item:inasr4c}) of Proposition~\ref{prop:adapnsphroots_general} holds and that 
\begin{equation}
a(\alpha) = \{\alpha^{\#},\alpha^{\vee}|_{\Z\wm}-\alpha^{\#}\}. \label{eq:aalpha}
\end{equation}
This immediately implies condition~(\ref{item:inasr4a}) of the Proposition. Moreover, using the descriptions of $\delta'$ and $\gamma$ and condition~(\ref{lem:sl2cstSigmaN:item3a}) of the Lemma we obtain that 
\begin{equation*}
\<\alpha^{\#},\lambda\>=0, \quad \<\alpha^{\#},\gamma\> >0, \quad \<\delta',\gamma\>=0, \quad \<\delta',\lambda\>>0
\end{equation*} and condition~(\ref{item:inasr4b}) of the Proposition now follows from (\ref{lem:sl2cstSigmaN:item3c}). 

We now prove the reverse implication in assertion (\ref{lem:sl2cstSigmaN:item3}). Assume that $\alpha \in \SNWM$. It follows from (\ref{item:inasr1}) of Proposition~\ref{prop:adapnsphroots_general} and $a(\alpha) \neq \emptyset$ that $\alpha$ is primitive in $\Z\wm$. It follows from (\ref{item:inasr4a}) of the Proposition that $\rk \Z\wm = 2$  and therefore there exists $\beta \in \Z\wm$ such that $\Z\wm = \Z\alpha \oplus \Z\beta$. Again from (\ref{item:inasr4a}) of the Proposition we know that there exists $\delta \in E(\wm)$ such that 
\begin{equation}
\<\delta,\alpha\>=1 \text{ and } \alpha^{\vee} - \delta \neq \delta \label{eq:alphaveemindeltaNotdelta}
\end{equation}
Let $c:=\<\delta,\beta\>$. Then $c\alpha-\beta$ is on the line $\{\delta=0\}$, which supports a ray of $\wm$. By \eqref{eq:alphaveemindeltaNotdelta}, we know that $\delta \notin \Q_{\ge 0} \alpha^{\vee}|_{\Z\wm}$ and consequently $\alpha^{\vee}$ takes a positive value on exactly one element of $\{c\alpha-\beta, \beta-c\alpha\}$. Let $\lambda$ be that element. By construction, $\lambda$ satisfies condition (\ref{lem:sl2cstSigmaN:item3a}) of the Lemma and it is a primitive element of $\Z\wm$ on a ray of $\Q_{\ge 0}\wm$. Moreover, $\{\alpha,\lambda\}$ is also a basis of $\Z\wm$ and therefore $\lambda$ satisfies condition (\ref{lem:sl2cstSigmaN:item3b}) of the Lemma. Let ${\alpha^{\#},\lambda^{\#}}$ be the dual basis of $\Z\wm^*$. Since $\<\delta,\alpha\>=1$ and $\<\delta,\lambda\>=0$ we have $\delta=\alpha^{\#}$ and it follows from (\ref{item:inasr4a}) of Proposition~\ref{prop:adapnsphroots_general} that \eqref{eq:aalpha} holds again. 

Since $E(\wm)$ contains an element, namely $\delta$, that is not a  multiple of $\alpha^{\vee}|_{\Z\wm}$ we know that the cone $\Q_{\ge 0}\wm$ has two linearly independent rays and that $E(\wm)$ contains two elements. Let $\gamma$ be the primitive element of $\Z\wm$ on the ray of $\Q_{\ge 0}\wm$ that is not spanned by $\lambda$. Then $\gamma = a\alpha + \b\lambda$ for some $a,b \in \Z$ with $\gcd(a,b)=1$ and $a\neq 0$. Because $\<\alpha^{\#},\gamma\>=a$, it follows from  condition (\ref{item:inasr4b}) of Proposition~\ref{prop:adapnsphroots_general} that $a>0$. Let $\delta'$ be the element of $E(\wm)$ that vanishes on $\gamma$. Then $\delta' = a\lambda^{\#}-b\alpha^{\#}$. Since $\<\delta',\alpha\> = -b$ it follows  from (\ref{item:inasr4c}) of the Proposition that $b \ge -1$. We now show that $\gamma$ has to be as described in (\ref{lem:sl2cstSigmaN:item3c}). If $b > 0$ then we are done. We check the two remaining values for $b$: 
\begin{itemize}
\item[-] If $b=-1$ then $\delta' \in a(\alpha)$, and so by \eqref{eq:aalpha} we have that $\delta' = \alpha^{\vee}|_{\Z\wm}-\alpha^{\#} = \alpha^{\#}+\<\alpha^{\vee},\lambda\>\lambda^{\#}$. It follows that $a=\<\alpha^{\vee},\lambda\>$ and $\gamma=\<\alpha^{\vee},\lambda\>\alpha-\lambda$;
\item[-] If $b=0$ then $a=1$ because $\gcd(a,b)=1$ and so $\gamma=\alpha$.
\end{itemize} 
This completes the proof.  
\end{proof}

\begin{theorem} \label{thm:SL_2_times_C_star}
Let $G=\SL(2) \times \C^{\times}$. A submonoid $\wm$ of $\dw = \<\om,\eps,-\eps\>_{\N}$ is a smooth weight monoid if and only if it is one of the following submonoids of $\dw$:
\begin{enumerate}[(1)]
\item $\{0\}$;\label{list:rank0:case1}
\item $\<b\eps,-b\eps\>_{\N}$ for some $b\in \N\setminus\{0\}$;\label{list:rank1:case1}
\item $\<b\eps\>_{\N}$ for some $b \in \Z\setminus\{0\}$; \label{list:rank1:case2}
\item $\<\om + b\eps\>_{\N}$ for some $b \in \Z$;\label{list:rank1:case3}
\item $\<a\om\>_{\N}$ for some $a \in \{2,4\}$;\label{list:rank1:case4}
\item $\<\om+c\eps, b\eps, -b\eps\>_{\N}$ for some $c\in\Z, b\in \N\setminus\{0\}$ and $|c|\le\frac{b}{2}$;   \label{list:rank2:case1}
\item $\<a\om,b\eps,-b\eps\>_{\N}$ for some $a\in \{2,4\}$ and some $b\in \N\setminus\{0\}$; \label{list:rank2:case2}
\item $\<a\om, b\eps\>_{\N}$ for some $a\in \{2,4\}$ and some $b\in \Z\setminus \{0\}$; \label{list:rank2:case3}
\item $\<2\om+b\eps, 2\om-b\eps,2b\eps, -2b\eps\>_{\N}$ for some $b\in \N\setminus\{0\}$; \label{list:rank2:case4}
\item $\<2\om, a\om+b\eps\>_{\N}$ for some $a\in\N\setminus\{0\}$ and some $b \in\Z\setminus\{0\}$; \label{list:rank2:case5}
\item $\<a\om+b\eps, a\om-b\eps, 2\om\>_{\N}$ for some $a,b \in\N\setminus\{0\}$; \label{list:rank2:case6}
\item $\<2\om+b\eps, 2b\eps\>_{\N}$ for some $b\in\Z\setminus\{0\}$; \label{list:rank2:case7}
\item $\<\om+b\eps, c\eps\>_{\N}$ for some $b\in \Z$ and some $c\in\Z\setminus\{0\}$; \label{list:rank2:case8}
\item $\<4\om, 2b\eps, 2\om+b\eps\>_{\N}$ for some $b\in \Z\setminus\{0\}$. \label{list:rank2:case9}
\end{enumerate}
\end{theorem}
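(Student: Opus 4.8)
The plan is to combine the general characterization of smooth weight monoids in \cite[Theorem~4.2]{charwm_arxivv2} with Lemma~\ref{lem:sl2cstSigmaN}, which computes $\SNWM$. Since a smooth variety is normal, its weight monoid satisfies \eqref{eq:normal_monoid}, so I first restrict attention to \emph{normal} submonoids $\wm \inn \dw$; such a monoid is determined by the pair $(\Z\wm,\Q_{\ge 0}\wm)$. As $\rk\wl=2$, these pairs fall into few shapes: $\wm=\{0\}$; the rank-one monoids, namely a ray $\N v$ with $v\in\dw$ or the line $\Z(b\eps)$; and the rank-two monoids, whose cone sits inside the half-plane $\Q_{\ge 0}\dw=\{p\om+q\eps:p\ge 0\}$ and is either that whole half-plane or a sharp two-dimensional subcone. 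Throughout I would organize the discussion by the trichotomy $\SNWM\in\{\emptyset,\{\alpha\},\{2\alpha\}\}$ furnished by Lemma~\ref{lem:sl2cstSigmaN}\,(\ref{lem:sl2cstSigmaN:item1}).

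First I would treat the ranks $0$ and $1$. Here $\alpha\notin\SNWM$ (by Lemma~\ref{lem:sl2cstSigmaN}\,(\ref{lem:sl2cstSigmaN:item3}) an N-spherical root equal to $\alpha$ forces $\rk\Z\wm=2$), so only $\SNWM=\emptyset$ or $\SNWM=\{2\alpha\}$ is possible. The condition $\SNWM=\{2\alpha\}$ forces, via Lemma~\ref{lem:sl2cstSigmaN}\,(\ref{lem:sl2cstSigmaN:item2a}), that $2\alpha\in\Z\wm\inn\<2\om,\eps\>_{\Z}$, which in rank one leaves exactly $\<a\om\>_{\N}$ with $a\in\{2,4\}$, i.e.\ case~\eqref{list:rank1:case4}; when $\SNWM=\emptyset$ the criterion reduces to primitivity of $\alpha^{\vee}|_{\Z\wm}$ (and a toric freeness condition), giving cases \eqref{list:rank0:case1}--\eqref{list:rank1:case3}. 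For rank two I would subdivide along $\SNWM$. If $\SNWM=\{2\alpha\}$, then again $\Z\wm\inn\<2\om,\eps\>_{\Z}$, and Lemma~\ref{lem:sl2cstSigmaN}\,(\ref{lem:sl2cstSigmaN:item2b}) forces any \emph{sharp} cone to have one ray along the $\eps$-axis; running through the resulting lattice/cone possibilities and imposing smoothness produces the half-plane cases \eqref{list:rank2:case2}, \eqref{list:rank2:case4} and the sharp cases \eqref{list:rank2:case3}, \eqref{list:rank2:case7}, \eqref{list:rank2:case9}. The coupling one sees there (for instance that the $\eps$-generator is forced to $2b\eps$ once the other generator is $2\om+b\eps$) is dictated by the lattice $\<2\om+b\eps,2b\eps\>_{\Z}$ and the primitivity of the extremal rays.

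Next I would handle the two remaining values of $\SNWM$ in rank two. When $\SNWM=\emptyset$, smoothness again reduces to primitivity of $\alpha^{\vee}|_{\Z\wm}$ together with a toric condition on $\Q_{\ge 0}\wm$; one checks that the half-plane case \eqref{list:rank2:case1} (with the normalization $|c|\le\tfrac b2$) and the two-generator case \eqref{list:rank2:case8} exhaust this possibility. The delicate value is $\SNWM=\{\alpha\}$, the genuinely non-$G$-saturated situation. Here Lemma~\ref{lem:sl2cstSigmaN}\,(\ref{lem:sl2cstSigmaN:item3}) supplies a basis $\{\alpha,\lambda\}$ of $\Z\wm$ with $\<\alpha^{\vee},\lambda\>>0$ and restricts the second extremal ray $\gamma$ of $\Q_{\ge 0}\wm$ to one of the three shapes in (\ref{lem:sl2cstSigmaN:item3c}). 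Feeding each shape into the smoothness conditions of \cite[Theorem~4.2]{charwm_arxivv2} (the conditions involving $E(\wm)$ and the colour set $a(\alpha)$) selects exactly the cases $\gamma=\<\alpha^{\vee},\lambda\>\alpha-\lambda$ and $\gamma=\alpha$, yielding the symmetric family \eqref{list:rank2:case6} and the family \eqref{list:rank2:case5}, while the generic shape $\gamma=a\alpha+b\lambda$ is ruled out as non-smooth. The reverse implication is then a finite check: for each listed family one computes $\SNWM$ (via Lemma~\ref{lem:sl2cstSigmaN}), together with $S^p(\wm)$ and $S_{\wm}$, and verifies the hypotheses of \cite[Theorem~4.2]{charwm_arxivv2}.

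I expect the main obstacle to be the case $\SNWM=\{\alpha\}$: this is precisely where the full, non-saturated criterion is needed, and the crux is to show that of the three cone shapes permitted by Lemma~\ref{lem:sl2cstSigmaN}\,(\ref{lem:sl2cstSigmaN:item3c}) only two survive the smoothness conditions, so that no family beyond \eqref{list:rank2:case5} and \eqref{list:rank2:case6} appears. A second, more clerical difficulty pervades the whole argument: the statement asserts that each smooth weight monoid is \emph{exactly one} entry of a cleanly parametrized list, so one must verify that the fourteen families are exhaustive, pairwise disjoint, and written in canonical parameters --- hence the normalizations such as $|c|\le\tfrac b2$ in \eqref{list:rank2:case1}. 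Because a single monoid admits many generating sets, confirming these normalizations (rather than any individual smoothness computation) is where most of the effort lies.
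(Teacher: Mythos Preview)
Your proposal is correct and follows essentially the same route as the paper's proof: both organize the case analysis by $\rk\Z\wm$ and then by the trichotomy $\SNWM\in\{\emptyset,\{\alpha\},\{2\alpha\}\}$ from Lemma~\ref{lem:sl2cstSigmaN}, and both identify the crux as showing that in the $\SNWM=\{\alpha\}$ case only two of the three cone shapes in Lemma~\ref{lem:sl2cstSigmaN}\,(\ref{lem:sl2cstSigmaN:item3c}) survive the smoothness criterion of \cite[Theorem~4.2]{charwm_arxivv2}. The only minor tactical difference is that the paper invokes the simpler $G$-saturated criterion (Theorem~\ref{thm:G_sat_char}) as a shortcut for the $G$-saturated monoids in the list (cases \eqref{list:rank0:case1}, \eqref{list:rank1:case1}, \eqref{list:rank1:case3}, \eqref{list:rank1:case4}, \eqref{list:rank2:case1}, \eqref{list:rank2:case2}, \eqref{list:rank2:case4}), whereas you would apply the general criterion throughout; and your concern about the list being pairwise disjoint with canonical parameters is not something the paper explicitly addresses, since the theorem as stated only claims membership in the list, not uniqueness of the entry.
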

\begin{proof}
We first assume that $\wm$ is smooth and show that it occurs in the list of the Theorem.
We begin by assuming $\rk \Z\wm=1$. It then follows from the normality of $\wm$ that if $\wm$ is not $G$-saturated, then $\wm$ is of the form (\ref{list:rank1:case2}) in the Theorem. Hence we may assume that $\wm$ is $G$-saturated, so that we can use Theorem~\ref{thm:G_sat_char}.  If $\SNWM = \{2\alpha\}$ then it follows from Lemma~\ref{lem:sl2cstSigmaN} that $\Z\wm=\<2\om\>_{\Z}$ or $\Z\wm=\<4\om\>_{\Z}$. By the normality of $\wm$ it follows that $\wm=\<2\om\>_{\N}$ of $\wm=\<4\om\>_{\N}$. If $\SNWM=\emptyset$ then it follows from Lemma~\ref{lem:sl2cstSigmaN} that $\Z\wm=\Z\lambda$ with $\lambda \in \dw\setminus\{0,2\om,4\om\}$. Write $\lambda=a\om+b\eps$ with $a\ge 0$ and $b\in \Z$. Because $S_{\wm}=\{\alpha\}$ it follows from Theorem~\ref{thm:G_sat_char}(\ref{cond:partofbasis}) that $\alpha^{\vee}|_{\Z\wm}$ must be $0$ (when $S^p(\wm)=\{\alpha\}$) or part of a basis of $\Z\wm^*$. This implies that $a \in \{0,1\}$. If $a=0$ then $\Z\wm=\<b\eps\>_{\Z}$ and it follows from the $G$-saturatedness of $\wm$ that $\wm=\<b\eps,-b\eps\>_{\N}$. If $a=1$, then $\wm=\<\om+b\eps\>_{\N}$. We have shown that every smooth weight monoid of rank $1$ shows up in the list in the Theorem. 

We now consider the case where $\rk \Z\wm=2$. For several cases we will make use of \cite[Theorem 4.2]{charwm_arxivv2} and the notations in that paper. By Lemma~\ref{lem:sl2cstSigmaN} there are three possibilities for $\SNWM$. We first assume that $\SNWM = \{\alpha\}$. It follows from (\ref{lem:sl2cstSigmaN:item3}) of Lemma~\ref{lem:sl2cstSigmaN} that there exist $u,v \in \Z$ with $u>0$ and $v\neq 0$ such that $\Z\wm = \Z\alpha\oplus\Z\lambda$ and $\Q_{\ge 0}\wm=\Q_{\ge 0}\{\lambda,\gamma\}$ where $\lambda=u\om+v\eps$ and $\gamma$ is as in Lemma~\ref{lem:sl2cstSigmaN}(\ref{lem:sl2cstSigmaN:item3c}). Note that if $\gamma=\<\alpha^{\vee},\lambda\>\alpha-\lambda$ or $\gamma=\alpha$, then it follows from the normality of $\wm$ and Gordan's lemma (see, e.g.\ \cite[Lemma 2.9, p.52]{bruns_gubeladze_polytopes_book}) that $\wm$ is a monoid in entry (\ref{list:rank2:case6}) or entry (\ref{list:rank2:case5}) of the list in the Theorem, respectively. We claim that if $\gamma=a\alpha+b\lambda$ with $a,b \in \N\setminus\{0\}$ then $\wm$ is not smooth. Indeed, in this case $\mathcal{C}(\wm,\SNWM)$ of \cite[Definition 4.1]{charwm_arxivv2} is equal to $\wm^{\vee}$ because the element $\delta$ of $E(\wm)$ that is zero on the line spanned by $\gamma$ lies in the relative interior of the cone $\mathcal{V}(\wm,\SNWM)$ of  \cite[Definition 4.1]{charwm_arxivv2}. Consequently the $|\D(\wm,\SNWM)|$-tuple $(\rho(D))_{D \in \D(\wm,\SNWM)}$ of part (3) of \cite[Theorem 4.2]{charwm_arxivv2} is equal to
$a(\alpha) \cup \{\delta\}$ and contains $3$ elements. It can therefore not be part of a basis of $\Z\wm^*$, contradicting condition (3) of \cite[Theorem 4.2]{charwm_arxivv2} for $\wm$ to be smooth. This proves our claim and finishes the proof that if $\SNWM=\{\alpha\}$ then $\wm$ shows up in the list of the Theorem. 

Next we assume that $\SNWM=\{2\alpha\}$. Then it follows from Lemma~\ref{lem:sl2cstSigmaN}(\ref{lem:sl2cstSigmaN:item2a}) that $\alpha$ or $2\alpha$ is primitive in $\Z\wm$. From the same assertion we also obtain that if $\alpha = 2\om$ is primitive, then 
\begin{equation} \label{eq:alphaprim}
\text{there exists } c \in \Z\setminus \{0\} \text{ such that } \Z\wm =\<\alpha,c\eps\>_{\Z}, 
\end{equation}
and that if $2\alpha$ is primitive then 
\begin{equation} \label{eq:twoalphaprim}
\text{there exist } x \in \{0,1\} \text{ and } d\in \Z\setminus\{0\} \text{ such that } \Z\wm = \<2\alpha,x\alpha+d\eps\>_{\Z}.
\end{equation}
Suppose $\Q_{\ge 0}\wm = \Q_{\ge 0}\dw$. Then it follows from the normality of $\wm$ and Gordan's lemma that $\wm$ is the monoid (\ref{list:rank2:case2}) in the Theorem with $a = 2$ and $b=|c|$ if $\alpha$ is primitive. Similarly, if $2\alpha$ is primitive then $\wm$ is the monoid (\ref{list:rank2:case2}) with $a=4$ and $b=|d|$ if $x=0$, and $\wm$ is the monoid (\ref{list:rank2:case4}) with $b=|d|$ if $x=1$. Next we suppose that $\Q_{\ge 0}\wm$ is a strict subset of $\Q_{\ge 0}\dw$. Let $u,v,w$ be as in Lemma~\ref{lem:sl2cstSigmaN}(\ref{lem:sl2cstSigmaN:item2b}). 

If $\alpha$ is primitive, then the set $\Sigma^{sc}(\wm)$ defined in \cite[Definition 2.11]{charwm_arxivv2} is equal to $\{\alpha\}$, by \cite[Proposition 2.7]{charwm_arxivv2}. We claim that we must have $v=0$. Consequently $u=1$ and it follows that $\wm$ is the monoid in (\ref{list:rank2:case3}) for $a=2$ and $b=w$. The claim follows from condition (3) of \cite[Theorem 4.2]{charwm_arxivv2} for $\wm$ to be smooth. Indeed, if $v \neq 0$, then $\mathcal{C}(\wm,\Sigma^{sc}(\wm))$ of \cite[Definition 4.1]{charwm_arxivv2} is equal to $\wm^{\vee}$. The fact that $\alpha \in \Sigma^{sc}(\wm)$ then implies that $\mathcal{D}(\wm,\Sigma^{sc}(\wm))$ of \loccit\ contains $2$ elements $D^+$ and $D^-$ such that $\rho(D^+)=\rho(D^-)=\frac{1}{2}\alpha^{\vee}|_{\Z\wm}$. As a consequence, the $|\D(\wm,\Sigma^{sc}(\wm))|$-tuple $(\rho(D))_{D \in \D(\wm,\Sigma^{sc}(\wm))}$ cannot be 
part of a basis $\Z\wm^*$ which contradicts condition (3) of \cite[Theorem 4.2]{charwm_arxivv2}. 

If $2\alpha$ is primitive in $\Z\wm$ and $v=0$ then it follows that $u=2$. If $x=0$ in \eqref{eq:twoalphaprim} then $\wm$ is the monoid in (\ref{list:rank2:case3}) for $a=4$ and $b=w$. If $x=1$ in \eqref{eq:twoalphaprim}  then it follows from the fact that $w\eps $ is a primitive element of $\Z\wm$ that $w \in \{-2d,2d\}$. It follows that $\wm$ is the monoid in (\ref{list:rank2:case9}) with $b=w/2$. On the other hand, if $2\alpha$ is primitive and $v\neq 0$, 
then $\mathcal{C}(\wm,\SNWM)$ of \cite[Definition 4.1]{charwm_arxivv2} is equal to $\wm^{\vee}$  and the $|\D(\wm,\SNWM)|$-tuple $(\rho(D))_{D \in \D(\wm,\SNWM)}$ of \loccit\ is equal to $E(\wm)=\{\frac{1}{2}\alpha^{\vee}|_{\Z\wm},\delta\}$ where $\delta$ be the element of $E(\wm)$ which vanishes on the ray $u\alpha+v\eps$ of $\Q_{\ge 0}\wm$. By part (3) in \cite[Theorem 4.2]{charwm_arxivv2}, $E(\wm)$ is a basis of $\Z\wm^*$. This implies that $\{u\alpha+v\eps, w\eps\}$ is the dual basis of $\Z\wm$. Consequently $\<\frac{1}{2}\alpha^{\vee}|_{\Z\wm}, u\alpha+v\eps\>=1$ implies that $u = 1$. Moreover it follows from  part (2) in \cite[Theorem 4.2]{charwm_arxivv2} that $\overline{\soc}(\wm,\SNWM)$ is the third entry of \cite[Table 2]{charwm_arxivv2} for $n=1$ which implies that $\<\delta,2\alpha\>=-1$. Since $\<\delta,\alpha+v\eps\>=0$ this implies that $\<\delta,v\eps\>=\frac{1}{2}$ so that $v=w/2$. It follows that $\wm$ is the monoid (\ref{list:rank2:case7}) in the Theorem's list for $b=v$.  
 
The third possibility for $\SNWM$ is that it is empty. If $\Q_{\ge 0}\wm = \Q_{\ge 0}\dw$, then it follows from part (3) of \cite[Theorem 4.2]{charwm_arxivv2} that $\alpha^{\vee}|_{Z\wm}$ is part of a basis of $\Z\wm^*$. One deduces that $\wm$ is a monoid in the entry (\ref{list:rank2:case1}) of the Theorem. On the other hand, if $\Q_{\ge 0}\wm$ is a strict subset of $\Q_{\ge 0}\dw$ then it follows from \loccit\ that $\alpha^{\vee}_{\Z\wm} \in E(\wm)$ and that $E(\wm)$ is a basis of $\Z\wm^*$. It follows that $\wm$ is free and is generated (as a monoid) by the dual basis of $\Z\wm$. Consequently, $\wm$ is one of the monoids in entry (\ref{list:rank2:case8}) of the Theorem. This completes the proof of the fact that every smooth weight monoid for $G$ occurs in the list of the Theorem.  

What is left is to prove the reverse implication, namely that every monoid in the list of the Theorem is smooth. For the monoids in (\ref{list:rank0:case1}), (\ref{list:rank1:case1}), (\ref{list:rank1:case3}), (\ref{list:rank1:case4}), (\ref{list:rank2:case1}), (\ref{list:rank2:case2}) and (\ref{list:rank2:case4}) one can  use  Theorem~\ref{thm:G_sat_char} since they are $G$-saturated. For the other monoids one can use \cite[Theorem 4.2]{charwm_arxivv2}. This only involves elementary, if somewhat lengthy, verifications for each monoid, which we leave to the reader. Alternatively, one can exhibit for each monoid $\wm$ a smooth affine spherical $G$-variety $X$ with $\wm(X)=\wm$ (these varieties are given in Table~\ref{table:sl2cst}, but determining their weight monoid is left to the reader). 
\end{proof}

\begin{corollary} \label{cor:sl2cst}
Let $G =\SL(2) \times \C^{\times}$. Every smooth affine spherical $G$-variety is $G$-equivariantly isomorphic to a (unique) $G$-variety in Table~\ref{table:sl2cst}. 
\end{corollary}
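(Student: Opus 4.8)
The plan is to deduce the corollary from the classification of smooth weight monoids in Theorem~\ref{thm:SL_2_times_C_star} together with Losev's uniqueness result, Theorem~\ref{thm:losev}. Since Theorem~\ref{thm:losev} guarantees that a smooth affine spherical $G$-variety is determined up to $G$-equivariant isomorphism by its weight monoid, the content of the corollary reduces to two points: first, that the weight monoid of every smooth affine spherical $G$-variety appears in the list of Theorem~\ref{thm:SL_2_times_C_star}; and second, that each of the fourteen monoids in that list is realized by (at least) one explicit smooth affine spherical $G$-variety. The first point is exactly Theorem~\ref{thm:SL_2_times_C_star}, so no further work is needed there.

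For the second point, I would go through the fourteen entries of Theorem~\ref{thm:SL_2_times_C_star} one at a time and exhibit for each a concrete smooth affine spherical $G$-variety $X$, together with the decomposition of $\C[X]$ as a $G$-module, reading off $\wm(X)$ from the highest weights that occur. These are the varieties recorded in Table~\ref{table:sl2cst}. Because $\SL(2)$-representation theory is completely explicit and the $\C^{\times}$-factor only twists weights by multiples of $\eps$, each such computation is elementary: the candidate varieties are built from a small repertoire of pieces --- the point, $\C$ and $\C^{\times}$ with suitable $\C^{\times}$-weights, the affine quadrics $\SL(2)/T$ and $\SL(2)/N(T)$, the module $\C^2$, and products, $\C^{\times}$-twists and $\C^{\times}$-fibrations of these --- and in each case the coordinate ring decomposes as a multiplicity-free direct sum of representations of the form $V(a\om)\otimes\eps^{j}$ whose highest weights are directly visible.

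Once each monoid in the list is realized by the corresponding entry of Table~\ref{table:sl2cst}, the corollary follows formally: given any smooth affine spherical $G$-variety $X$, its weight monoid $\wm(X)$ occurs in the list by Theorem~\ref{thm:SL_2_times_C_star}, hence equals the weight monoid of exactly one entry of the table, and Theorem~\ref{thm:losev} then forces $X$ to be $G$-equivariantly isomorphic to that entry. Uniqueness of the entry, and hence of the variety in the table, follows from the fact that the fourteen monoids listed in Theorem~\ref{thm:SL_2_times_C_star} are pairwise distinct. The main --- and essentially the only --- obstacle is the bookkeeping in the realization step: one must produce genuinely \emph{smooth} affine varieties (so that Losev's theorem applies) whose weight monoids hit every monoid in the list \emph{exactly}, which requires choosing the right $\C^{\times}$-twists and fibration data so that the parameters $a,b,c,\ldots$ match those in Theorem~\ref{thm:SL_2_times_C_star}. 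This is lengthy but routine, which is why the explicit weight-monoid verifications are delegated to the reader.
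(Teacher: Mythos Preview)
Your proposal is correct and follows essentially the same approach as the paper: invoke Theorem~\ref{thm:SL_2_times_C_star} to list all smooth weight monoids, realize each by an explicit variety from Table~\ref{table:sl2cst} (with the elementary weight-monoid computations left to the reader), and conclude by Losev's Theorem~\ref{thm:losev}. The paper's own proof is just a two-sentence version of exactly this argument.
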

\begin{proof}
By Losev's Theorem~\ref{thm:losev} and Theorem~\ref{thm:SL_2_times_C_star} it suffices to prove that each variety $X$ in Table~\ref{table:sl2cst} has the indicated weight monoid $\wm(X)$. We leave this verification to the reader. One way to proceed is to use basic facts in the representation theory of $\slcst$ to explicitly determine the highest weights of $\SL(2)\times \C^{\times}$ that occur in the coordinate ring $\C[X]$ of $X$.   
\end{proof}

\newcounter{slcstarnumber}
\newcommand{\atslcstn}{\addtocounter{slcstarnumber}{1}\theslcstarnumber}

\begin{longtable}{|c|c|c|c|c|}
\caption{Smooth affine spherical $(\SL(2)\times \C^{\times})$--varieties.}\label{table:sl2cst}\\
\hline
Case & $X$ & $\wm(X)$ & $\Sigma^N(X)$ & parameters \\
\hline
\atslcstn & point & $\{0\}$ & $\emptyset$ & \\
\hline
\atslcstn & $\C^{\times}_{b\eps}$ & $\<b\eps,-b\eps\>_{\N}$ &  $\emptyset$ &$b\in \N\setminus\{0\}$ \\
\hline
\atslcstn & $\C_{-b\eps}$  & $\<b\eps\>_{\N}$ & $\emptyset$ & $b \in \Z\setminus\{0\}$ \\
\hline
\atslcstn & $\C^2\otimes \C_{-b\eps}$ & $\<\om + b\eps\>_{\N}$ & $\emptyset$ & $b \in \Z$ \\
\hline
\atslcstn & $\SL(2)/T$  & $\<2\om\>_{\N}$ &$\{2\alpha\}$ & \\
\hline
\atslcstn & $\SL(2)/N(T)$ & $\<4\om\>_{\N}$ & $\{2\alpha\}$ &\\
\hline
\atslcstn & $(\C^2 \otimes \C_{-b\eps})\times \C^{\times}_{-c\eps}$ & $\<\om+b\eps, c\eps, -c\eps\>_{\N}$ & $\emptyset$ & \begin{tabular}{c} $b\in\Z,$\\ $c\in \N\setminus\{0\},$ {\scriptsize $|b|\le\frac{c}{2}$}\end{tabular} \\
\hline
\atslcstn & $\SL(2)/T \times \C^{\times}_{-b\eps}$ & $\<2\om,b\eps,-b\eps\>_{\N}$ & $\{2\alpha\}$ & $b\in \N\setminus\{0\}$ \\
\hline
\atslcstn & $\SL(2)/N(T) \times \C^{\times}_{-b\eps}$  &  $\<4\om,b\eps,-b\eps\>_{\N}$ & $\{2\alpha\}$ & $b\in \N\setminus\{0\}$ \\
\hline
\atslcstn & \begin{tabular}{c}$(\SL(2)\times\C^{\times})/H_b$, \\
{\scriptsize $H_b=\{ (n,\sigma(n) z) \} $ } \\
{\scriptsize where $n\in N(T)$, } \\
{\scriptsize $z\in\C^\times$, $\sigma(n) z^b=1$,} \\
{\scriptsize $\sigma$ nontriv.\ char. of $N(T)$} \end{tabular} & \begin{tabular}{c}
$\<2\om+b\eps, 2\om-b\eps,$\\ $2b\eps, -2b\eps\>_{\N}$ \end{tabular} & $\{2\alpha\}$ & $b\in \N\setminus\{0\}$ \\ 
\hline
\atslcstn & $(\C^2 \otimes \C_{-b\eps})\times \C_{-c\eps}$ & $\<\om+b\eps, c\eps\>_{\N}$ & $\emptyset$ &\begin{tabular}{c} $b\in \Z,$\\ $c\in\Z\setminus\{0\}$\end{tabular} \\
\hline
\atslcstn & $\SL(2)/T \times \C_{-b\eps}$ & $\<2\om, b\eps\>_{\N}$ & $\{2\alpha\}$ & $b\in \Z\setminus \{0\}$ \\
\hline
\atslcstn & $\SL(2)/N(T) \times \C_{-b\eps}$ & $\<4\om, b\eps\>_{\N}$ & $\{2\alpha\}$ & $b\in \Z\setminus \{0\}$ \\
\hline
\atslcstn & \begin{tabular}{c}$\SL(2)\times^{T} \C_{a\om}$,\\
{\scriptsize $\C^{\times} \text{ acts on } \C_{a\om}$} \\ {\scriptsize $\text{ with weight }-b\eps$} 
\end{tabular}
&  $\<2\om, a\om+b\eps\>_{\N}$ & $\{\alpha\}$ & \begin{tabular}{c} $a\in\N\setminus\{0\}$,\\ $b \in\Z\setminus\{0\}$\end{tabular} \\
\hline
\atslcstn & $\displaystyle \frac{\SL(2)\times\C^{\times}}{\ker(a\om - b\eps)}$ & $\<a\om+b\eps, a\om-b\eps, 2\om\>_{\N}$ & $\{\alpha\}$ &  $a,b \in\N\setminus\{0\}$ \\
\hline
\atslcstn & $S^2\C^2 \otimes \C_{-b\eps}$ & $\<2\om+b\eps, 2b\eps\>_{\N}$ & $\{2\alpha\}$ & $b\in\Z\setminus\{0\}$ \\
\hline
\atslcstn & \begin{tabular}{c}$\SL(2) \times^{N(T)}\C_{\sigma}$,\\ {\scriptsize $\sigma$ nontriv.\ char.\ of $N(T)$,}\\ {\scriptsize $\C^{\times} \text{ acts on } \C_{\sigma}$} \\ {\scriptsize $\text{ with weight }-b\eps$}\end{tabular} & $\<4\om, 2b\eps, 2\om+b\eps\>_{\N}$ & $\{2\alpha\}$ &  $b\in \Z\setminus\{0\}$ \\
\hline
\end{longtable}

\begin{remark}\label{rem:altern}
As already mentioned in the introduction, our results can also be
deduced using other techniques. For example, Corollary~\ref{cor:sl2cst}
and Table~\ref{table:sl2cst} could be obtained by using the approach of
\cite{knop&bvs-classif}, taking as a starting point the fact that a
smooth affine spherical $\slcst$-variety is equivariantly isomorphic to a
homogeneous vector bundle of the form $(\slcst)\times^H V$, where $H$ is
a reductive subgroup of $\slcst$ and $V$ is an $H$-module. The pairs
$(H,V)$ for which $(\slcst)\times^H V$ is spherical can be classified
using a careful case-by-case analysis.
\end{remark}

\begin{remark} 
In Table~\ref{table:sl2cst_cones}, we have listed all the cones $\Q_{\ge 0}\wm$, where $\wm$ runs through the weight monoids of smooth affine spherical $\SL(2)\times \C^{\times}$--varieties such that $\Q_{\ge 0}\wm$ is pointed. Similar pictures of $\Q_{\ge 0}\wm$ were also included in F.~Knop's 2012 lecture on ``Multiplicity free Hamiltonian actions'' in the ``Lie-Theorie und komplexe Geometrie'' Seminar at the Universit\"at Marburg. In view of the conditions~\eqref{eq:sphericity}, the table gives the possible shapes of the moment polytope $\mathcal{P}$ of a multiplicity free Hamiltonian manifold near a vertex $a \in \mathcal{P}$ such that $G(a)$ is equal (or isogenous) to $\SL(2)\times \C^{\times}$.
\end{remark}

\begin{longtable}{|c|c|c|c|c|}
\caption{Pointed cones of smooth affine spherical $\SL(2)\times
\C^{\times}$--varieties {\scriptsize (the given value of $\tan \varphi$
assumes that $\om$ and $\eps$ have the same
length.)}}\label{table:sl2cst_cones}\\
\hline
\begin{tikzpicture}[line cap=round,line join=round,>=triangle
45,x=0.2cm,y=0.2cm]
\clip(-7.,-1.) rectangle (7.,7.);
\draw (-6.,0.)-- (6.,0.);
\draw [->] (0.,0.) -- (0.,4.);
\draw [->] (0.,0.) -- (4.,0.);
\draw[color=black] (4.,.75) node {$\varepsilon$};
\draw [line width=1.pt,color=sqsqsq] (0.,0.)-- (6.,6.);
\draw [shift={(0.,0.)}]
plot[domain=0.:0.7853981633974483,variable=\t]({1.*2.*cos(\t
r)+0.*2.*sin(\t r)},{0.*2.*cos(\t r)+1.*2.*sin(\t r)});
\begin{scriptsize}
\draw[color=black] (-0.99,3.75) node {$\omega$};
\draw[color=black] (2.38,1.07) node {$\varphi$};
\end{scriptsize}
\end{tikzpicture} & \begin{tikzpicture}[line cap=round,line
join=round,>=triangle 45,x=0.2cm,y=0.2cm]
\clip(-7.,-1.) rectangle (7.,7.);
\fill[color=sqsqsq,fill=sqsqsq,fill opacity=0.10000000149011612]
(-6.,6.) -- (0.,0.) -- (6.,6.) -- cycle;
\draw (-6.,0.)-- (6.,0.);
\draw [->] (0.,0.) -- (0.,4.);
\draw [->] (0.,0.) -- (4.,0.);
\draw[color=black] (4.,.75) node {$\varepsilon$};
\draw [color=sqsqsq] (-6.,6.)-- (0.,0.);
\draw [color=sqsqsq] (0.,0.)-- (6.,6.);
\draw [shift={(0.,0.)}]
plot[domain=0.7853981633974483:2.356194490192345,variable=\t]({1.*2.1354624791833734*cos(\t
r)+0.*2.1354624791833734*sin(\t r)},{0.*2.1354624791833734*cos(\t
r)+1.*2.1354624791833734*sin(\t r)});
\draw [shift={(0.,0.)}]
plot[domain=0.7853981633974483:2.356194490192345,variable=\t]({1.*2.1354624791833734*cos(\t
r)+0.*2.1354624791833734*sin(\t r)},{0.*2.1354624791833734*cos(\t
r)+1.*2.1354624791833734*sin(\t r)});
\begin{scriptsize}
\draw[color=black] (-0.99,3.75) node {$\omega$};
\draw[color=black] (1.28,2.5) node {$\varphi$};
\draw[color=black] (-1.28,2.5) node {$\varphi$};
\end{scriptsize}
\end{tikzpicture} & \begin{tikzpicture}[line cap=round,line
join=round,>=triangle 45,x=0.2cm,y=0.2cm]
\clip(-7.,-1.) rectangle (7.,7.);
\fill[color=sqsqsq,fill=sqsqsq,fill opacity=0.10000000149011612] (0.,6.)
-- (0.,0.) -- (6.,6.) -- cycle;
\draw (-6.,0.)-- (6.,0.);
\draw [->] (0.,0.) -- (0.,4.);
\draw [->] (0.,0.) -- (4.,0.);
\draw[color=black] (4.,.75) node {$\varepsilon$};
\draw [color=sqsqsq] (0.,6.)-- (0.,0.);
\draw [color=sqsqsq] (0.,0.)-- (6.,6.);
\draw [shift={(0.,0.)}]
plot[domain=0.7853981633974483:1.5707963267948966,variable=\t]({1.*2.1354624791833734*cos(\t
r)+0.*2.1354624791833734*sin(\t r)},{0.*2.1354624791833734*cos(\t
r)+1.*2.1354624791833734*sin(\t r)});
\begin{scriptsize}
\draw[color=black] (-0.99,3.75) node {$\omega$};
\draw[color=black] (1.34,2.5) node {$\varphi$};
\end{scriptsize}
\end{tikzpicture} & \begin{tikzpicture}[line cap=round,line
join=round,>=triangle 45,x=0.2cm,y=0.2cm]
\clip(-7.,-1.) rectangle (7.,7.);
\fill[color=sqsqsq,fill=sqsqsq,fill opacity=0.10000000149011612] (6.,6.)
-- (0.,0.) -- (6.,0.) -- cycle;
\draw (-6.,0.)-- (6.,0.);
\draw [->] (0.,0.) -- (0.,4.);
\draw [->] (0.,0.) -- (4.,0.);
\draw[color=black] (4.,.75) node {$\varepsilon$};
\draw [shift={(0.,0.)}]
plot[domain=0.7853981633974483:1.5707963267948966,variable=\t]({1.*2.1354624791833734*cos(\t
r)+0.*2.1354624791833734*sin(\t r)},{0.*2.1354624791833734*cos(\t
r)+1.*2.1354624791833734*sin(\t r)});
\draw [color=sqsqsq] (6.,6.)-- (0.,0.);
\draw [color=sqsqsq] (0.,0.)-- (6.,0.);
\begin{scriptsize}
\draw[color=black] (-0.99,3.75) node {$\omega$};
\draw[color=black] (1.34,2.27) node {$\varphi$};
\end{scriptsize}
\end{tikzpicture} &
\begin{tikzpicture}[line cap=round,line join=round,>=triangle
45,x=0.2cm,y=0.2cm]
\clip(-7.,-1.) rectangle (7.,7.);
\fill[color=sqsqsq,fill=sqsqsq,fill opacity=0.10000000149011612] (6.,6.)
-- (0.,0.) -- (-6.,0.) -- (-6.,6.) --  cycle;
\draw (-6.,0.)-- (6.,0.);
\draw [->] (0.,0.) -- (0.,4.);
\draw [->] (0.,0.) -- (4.,0.);
\draw[color=black] (4.,.75) node {$\varepsilon$};
\draw [shift={(0.,0.)}]
plot[domain=0.7853981633974483:1.5707963267948966,variable=\t]({1.*2.1354624791833734*cos(\t
r)+0.*2.1354624791833734*sin(\t r)},{0.*2.1354624791833734*cos(\t
r)+1.*2.1354624791833734*sin(\t r)});
\draw [color=sqsqsq] (6.,6.)-- (0.,0.);
\draw [color=sqsqsq] (0.,0.)-- (6.,0.);
\begin{scriptsize}
\draw[color=black] (-0.99,3.75) node {$\omega$};
\draw[color=black] (1.34,2.27) node {$\varphi$};
\end{scriptsize}
\end{tikzpicture} \\
\hline
{\scriptsize \begin{tabular}{c}$0\le \varphi \le \pi$\\
$(\tan \varphi)^{-1} \in \Z\cup\{\pm\infty\}$\end{tabular}} &
{\scriptsize \begin{tabular}{c} $0<\varphi < \frac{\pi}{2}$ \\ $\tan
\varphi \in \Q$\end{tabular}}  & {\scriptsize \begin{tabular}{c}
$-\frac{\pi}{2}\le \varphi \le \frac{\pi}{2}, \varphi \ne 0$ \\ $\tan
\varphi \in \Q\cup\{\pm\infty\}$\end{tabular}} & {\scriptsize
\begin{tabular}{c}$-\pi/2 < \varphi < \pi/2$ \\$2\tan \varphi \in \Z$
\end{tabular}} &{\scriptsize \begin{tabular}{c}$-\pi/2 < \varphi <
\pi/2$ \\$2\tan \varphi \in \Z$ \end{tabular}}
\\
\hline
\end{longtable}

\begin{example} \label{ex:SU3manifold}
In this example, we use the classifications in Section \ref{sec:G_sat_full_rank} and in this section to exhibit two pairs $(\mathcal{P},\wl_0)$ with the same polytope $\P$ satisfying the conditions \eqref{eq:sphericity} for $K=\SU(3)$. As explained in the introduction, this implies by \cite[Theorem 11.2]{knop-autoHam} that there exists a multiplicity free Hamiltonian $\SU(3)$-manifold $M$ with $(\mathcal{P}_M,\wl_M) = (\mathcal{P},\wl_0)$. The complexification of $K$ is $G=\SL(3)$. We will use $\om_1,\om_2$ for the fundamental weights of $G$. We then have $\wl = \<\om_1,\om_2\>_{\Z}$. We identify the positive Weyl chamber $\ft^+$ with $\<\om_1,\om_2\>_{\R_{\ge 0}}$. Let $\mathcal{P}$ be the triangle in $\ft^+$ with vertices $0$, $\om_1$ and $\om_2$:
\[\P:=\operatorname{conv}(0,\om_1, \om_2) \inn \ft^+.\]

We need to check the conditions \eqref{eq:sphericity} at $a=0$, at $a=\om_1$ and at $a=\om_2$. Because there is an (outer) automorphism of $G$ that exchanges $\om_1$ and $\om_2$, it suffices to check them at $0$ and $\om_1$. Using the notation of the introduction we have that 
$G(0) = G$ and $
G(\om_1) = \left\lbrace\left(\begin{smallmatrix}\det(A)^{-1} & 0 \\ 0 & A \end{smallmatrix}\right) \colon A \in \GL(2)\right\rbrace \cong \GL(2)$.
Observe that the tangent cone $C_0\P$ to $\P$ at $0$ is $\ft^+$. 
To take advantage of our classification of $(\slcst)$-varieties, we will use the following isogeny
\(\varphi \colon \slcst \to G(\om_1)\colon (A,z) \mapsto \left(\begin{smallmatrix}z^{-2} & 0 \\ 0 & zA \end{smallmatrix}\right)\).
The induced map $\varphi^*$ from the weight lattice $\wl$ of $G(\om_1)$ (and of $G$) to the weight lattice $\<\om,\eps\>_{\Z}$ of $\slcst$ satisfies:
$\varphi^*(\om_1)=-2\eps$ and $\varphi^*(\om_2) = \om-\eps$.
One immediately computes that the tangent cone to $\mathcal{P}$ at $\om_1$ is
$C_{\om_1}\mathcal{P} = \<-\om_1,\om_2-\om_1\>_{\R \ge 0}.$

We check that the pair 
\[(\P,\wl_0) \text{ with } \wl_0 = \wl = \<\om_1,\om_2\>_{\Z}\] satisfies \eqref{eq:sphericity} at $a=0$ and at $a=\om_1$. From Table~\ref{table:SL}, we see that $C_0\P\cap \wl_0$ is the weight monoid of the smooth affine spherical $G(0)$-variety $\SL(3)/\Sp(2)$. At $a=\om_1$, we have that $
C_{\om_1}\mathcal{P} \cap \wl_0 = \<-\om_1,\om_2-\om_1\>_{\N}$. Consequently, $\varphi^*(C_{\om_1}\mathcal{P} \cap \wl_0) = \<2\eps,\om+\eps\>_{\N}$, which is the weight monoid of the smooth affine spherical $(\slcst)$-variety $\C^2\otimes\C_{-\eps} \times \C_{-2\eps}$, cf.\ Table~\ref{table:sl2cst}. Similarly, one can check that the pair 
\[(\P,\wl_0) \text{ with } \wl_0 = 2\wl = \<2\om_1,2\om_2\>_{\Z}\] satisfies \eqref{eq:sphericity} at $a=0$ and at $a=\om_1$. Indeed, $C_0\P\cap \wl_0$ is the weight monoid of $\SL(3)/\SO(3)$, while  $\varphi^*(C_{\om_1}\mathcal{P} \cap \wl_0) = \<4\eps,2\om+2\eps\>_{\N}$ is the weight monoid of $S^2\C^2 \otimes \C_{-2\eps}$.  
\end{example}

\section{Woodward's reflective polytopes} \label{sec:woodward}

In this section we study a special class of weight monoids that are not $G$-saturated. They are related to Woodward's work \cite{woodward-classif} on {\em transversal} multiplicity free actions, which correspond to reflective polytopes (see Definition~\ref{def:reflective_polytope}).

As in the introduction, $K$ is a compact connected Lie group with complexification $G$. Let $W=N(T)/T$ be the Weyl group of $G$ and of $K$. We recall that $\ft^+ \inn \Lie(T_{\R})^* \equiv \wl\otimes_{\Z}\R$ is the chosen positive Weyl chamber of $K$ corresponding to the choice of the Borel subgroup $B$ of $G$. 
\begin{definition} \label{def:reflective_polytope}
Let $\P$ be a convex polytope in $\ft^+$. 
\begin{enumerate}[1.]
\item We call $\P$ \textbf{reflective} if the following conditions are fulfilled: \label{def:refl}
\begin{enumerate}[(a)]
  \item $\P$ is of maximal dimension;
  \item for all $a\in \P$, the set of hyperplanes generated by the faces of $\P$ of codimension $1$ containing $a$ is stable under $W_a = \{w \in W\colon w\cdot a = a\}$;
  \item for all $a\in \P$, any face of $\P$ of codimension $1$ containing $a$ meets the relative interior of $\ft^+$.
 \end{enumerate}
\item Let $\wl_0$ be a sublattice of $\wl$. We say that $\P$ is \textbf{Delzant} (with respect to $\wl_0$) if for every vertex $a$ of $\P$ there exists a basis $\lambda_1,\lambda_2,\ldots,\lambda_n$ of $\wl_0$ such that $C_a\P = \<\lambda_1,\lambda_2,\ldots,\lambda_n\>_{\R_{\ge 0}}$, where $C_a\P$ is the tangent cone to $\P$ at $a$.  
\end{enumerate} 
\end{definition}

Part of \cite[Theorem 1.3]{woodward-classif} can be stated as follows. 
\begin{theorem}[Woodward] \label{thm:woodward}
Let $K$ be a compact connected Lie group. Let $\P$ be a convex polytope in $\ft^+$. 
If $\P$ is reflective and Delzant with respect to the weight lattice $\wl$ of $K$, and if the semisimple part of the stabilizer $K_x$ is simply connected for every $x \in \P$, then there exists a multiplicity free Hamiltonian manifold $M$ with trivial generic isotropy group and moment polytope $\P$. 
\end{theorem}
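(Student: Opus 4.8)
The plan is to deduce Theorem~\ref{thm:woodward} from Knop's Theorem~11.2 of \cite{knop-autoHam}, stated here as the criterion \eqref{eq:sphericity}, by taking the lattice $\wl_0$ to be the \emph{full} weight lattice $\wl$ of $K$ and verifying the sphericity conditions one vertex at a time. First I would record that, since $\P$ is of maximal dimension and contained in $\ft^+$, the interior of $\P$ is a nonempty open subset of $\ft^+$ and hence meets the (dense) relative interior of $\ft^+$; thus $\P$ contains regular points, whose coadjoint stabilizer is $T_{\R}$. Consequently the centralizer $L_{\R}$ of $\P$ in $K$ equals $T_{\R}$. With $\wl_0 = \wl = \Hom(T_{\R},\C^{\times}) = \Hom(L_{\R},\C^{\times})$, any manifold $M$ produced by \eqref{eq:sphericity} satisfies $\wl_M = \Hom(L_{\R}/K_M,\C^{\times}) = \wl$, which forces $K_M$ to be trivial. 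So it suffices to produce, for each vertex $a$ of $\P$, a smooth affine spherical $G(a)$-variety $X_a$ whose weight monoid $\Gamma_a$ satisfies $\Q_{\ge 0}\Gamma_a = C_a\P$ and $\Z\Gamma_a = \wl$.

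Second, I would fix a vertex $a$, set $H := G(a)$ (a Levi subgroup containing $T$, hence sharing the weight lattice $\wl$), and recall that $C_a\ft^+$ is the dominant Weyl chamber of $H$. By the Delzant hypothesis there is a $\Z$-basis $\lambda_1,\ldots,\lambda_n$ of $\wl$ with $C_a\P = \<\lambda_1,\ldots,\lambda_n\>_{\R_{\ge 0}}$; since $C_a\P \inn C_a\ft^+$, each $\lambda_i$ is $H$-dominant. The unique normal monoid with cone $C_a\P$ and lattice $\wl$ is then the free monoid $\Gamma_a := \<\lambda_1,\ldots,\lambda_n\>_{\N}$, consisting of $H$-dominant weights and automatically satisfying $\Z\Gamma_a = \wl$ and $\Q_{\ge 0}\Gamma_a = C_a\P$. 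Because $\rk \Z\Gamma_a = \rk \wl$, Lemma~\ref{lem:SpSigmaN_GSat_FullRank}(\ref{lemma_item:Sp_GSat_FullRank}) gives $S^p(\Gamma_a) = \emptyset$, and the extremal vectors $E(\Gamma_a)$ are exactly the dual basis $\delta_1,\ldots,\delta_n$ of $\wl^* = (\Z\Gamma_a)^*$. It then remains to verify that $\Gamma_a$ is a smooth weight monoid for $H$.

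The decisive step is this local smoothness verification, and it is where the three reflectivity conditions enter. Since $\Gamma_a$ is in general \emph{not} $H$-saturated — its N-spherical roots will be simple roots, the very phenomenon flagged at the start of Section~\ref{sec:woodward} — I cannot use Theorem~\ref{thm:G_sat_char} and must instead apply the general characterization \cite[Theorem~4.2]{charwm_arxivv2}, whose N-spherical roots are computed by Proposition~\ref{prop:adapnsphroots_general}. The plan is to translate the reflective data into the hypotheses of that criterion: condition~(c) in Definition~\ref{def:reflective_polytope} guarantees that no facet hyperplane $\{\delta_i = 0\}$ of $C_a\P$ is a wall $\{\alpha^{\vee}=0\}$ of $C_a\ft^+$, i.e.\ no $\delta_i$ is proportional to a simple coroot of $H$; and condition~(b) guarantees that each simple reflection $s_{\alpha}$ of $H$ permutes the hyperplanes $\{\delta_i=0\}$, which is what pins down $\Sigma^N(\Gamma_a)$ as a set of simple roots of $H$ and pairs up the corresponding colors. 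I would then feed these into the conditions of \cite[Theorem~4.2]{charwm_arxivv2}, using the hypothesis that the semisimple part of $K_a$ (equivalently of $H$) is simply connected to secure the integrality conditions.

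I expect this last translation to be the main obstacle. Each simple spherical root $\alpha$ contributes two colors $D^{+},D^{-}$ whose images are the two elements of the set $a(\alpha)$ of Proposition~\ref{prop:adapnsphroots_general}, and condition~(3) of \cite[Theorem~4.2]{charwm_arxivv2} requires the whole family of such $\rho(D)$, together with the relevant extremal rays, to extend to a $\Z$-basis of $\wl^*$. Checking this integrality is delicate and genuinely uses both the smoothness (Delzant) of the cone and the simple-connectedness hypothesis; the latter plays the same role here as the type restriction in \cite[Theorem~1.15]{charwm_arxivv2}, where simple-connectedness of the semisimple part is exactly what makes the full weight lattice attainable by a smooth model. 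Once $\Gamma_a$ is shown to be a smooth weight monoid, Losev's Theorem~\ref{thm:losev} provides a smooth affine spherical $H$-variety $X_a$ with $\wm(X_a) = \Gamma_a$; assembling these over all vertices of $\P$ and invoking \eqref{eq:sphericity} then yields the desired multiplicity free Hamiltonian manifold $M$ and completes the proof.
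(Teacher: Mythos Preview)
Your overall strategy matches the paper's: reduce to Knop's criterion \eqref{eq:sphericity} with $\wl_0=\wl$, then for each vertex $a$ show that the free monoid $\Gamma_a=\N\{\lambda_1,\ldots,\lambda_n\}$ is a smooth weight monoid for $G(a)$ by feeding the reflective hypotheses into Proposition~\ref{prop:adapnsphroots_general} and \cite[Theorem~4.2]{charwm_arxivv2}. Your reading of conditions~(b) and~(c) in Definition~\ref{def:reflective_polytope} is also on target: (c) keeps every $\delta_i$ off the coroot walls, and (b) makes $s_\alpha$ permute the facet normals, which indeed forces $\Sigma^N(\Gamma_a)\subset S$ and produces, for each simple root $\alpha$ of $G(a)$, a pair $\delta,-s_\alpha\delta\in E(\Gamma_a)$ with $\alpha^{\vee}|_{\Z\Gamma_a}=\delta+(-s_\alpha\delta)$.

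What you are missing is the structural punchline that makes the verification of \cite[Theorem~4.2]{charwm_arxivv2} go through cleanly, and it also corrects your expectation about simple-connectedness. The paper observes (Lemma~\ref{lemma:reflectivegeq0}) that reflectivity together with linear independence of $E(\Gamma_a)$ forces $\langle\delta,\alpha\rangle\ge 0$ for every $\delta\in E(\Gamma_a)$ and every simple root $\alpha$ of $G(a)$. Combined with $\alpha^{\vee}|_{\Z\Gamma_a}=\delta_1+\delta_2$, this yields $\langle\alpha^{\vee},\beta\rangle\ge 0$ for all distinct simple roots $\alpha,\beta$ of $G(a)$, hence $\alpha\perp\beta$: the semisimple type of $G(a)$ is \emph{forced} to be $(\sA_1)^r$ (Theorem~\ref{thm:ourwoodward}). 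With this in hand, $\Sigma^N(\Gamma_a)=S$, all colors come from simple-root pairs already sitting inside $E(\Gamma_a)$, and $\overline{\soc}(\Gamma_a,\Sigma^N(\Gamma_a))$ is the trivial product of the first two entries of \cite[Table~2]{charwm_arxivv2}; conditions~(1)--(3) of \cite[Theorem~4.2]{charwm_arxivv2} then follow directly from the Delzant basis. In particular, the simply-connectedness hypothesis on $K_x$ is never used: the paper's Corollary~\ref{cor:ourwoodward} is strictly more general than Theorem~\ref{thm:woodward}, and your anticipated ``integrality obstacle'' does not arise.
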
 
Observe that the condition on $K_x$ in this theorem only needs to be checked for finitely many points $x \in \P$ since $K_x$ only depends on the face of $\ft^+$ that contains $x$ in its relative interior. 
We emphasize that Theorem~\ref{thm:woodward} is only part of Woodward's result: \loccit\ also  established the uniqueness of $M$ and characterized the multiplicity free actions that have moment polytopes which are reflective and Delzant: they are the so-called \emph{torsion-free} and \emph{transversal} multiplicity free actions. Woodward's approach is completely symplectic. 

The goal of this section is to give a proof of Theorem~\ref{thm:woodward} using the techniques we have seen in the rest of the paper, see Corollary~\ref{cor:ourwoodward} below. The property of being reflective, for a polytope $\P$, can be restated locally in terms of the weight monoids related to $\P$ as recalled in the introduction.

\begin{definition}\label{def:reflective}
Let $W$ be the Weyl group of $G$. 
A submonoid $\wm$ of $\Lambda^+$ is \textbf{reflective} if it is normal and the following hold:
 \begin{enumerate}
  \item $\wm$ has full rank;
  \item the set of hyperplanes generated by the faces of codimension $1$ of the cone spanned by $\wm$ is stable under $W$;
  \item\label{def:reflective:open} every face of codimension $1$ of the cone spanned by $\wm$ meets the open positive Weyl chamber.
 \end{enumerate}
\end{definition}

\begin{remark} \label{rem:reflnocorootwall}
If a monoid $\wm$ is reflective, then $E(\wm)$ does not contain any element proportional to $\alpha^\vee|_{\Z\Gamma}$ for $\alpha$ a simple root, thanks to property (\ref{def:reflective:open}) of the above definition. Since for a $G$-saturated submonoid $\wm$ of $\dw$ every element of $E(\wm)$ is proportional to $\alpha^\vee|_{\Z\Gamma}$ for some $\alpha \in S$, it follows in particular that a reflective submonoid of $\dw$ is never $G$-saturated when $G$ is not abelian. 
\end{remark}

We start by showing that reflectivity of $\wm$ has strong consequences on $\Sigma^{N}(\wm)$.

\begin{lemma}\label{lemma:SigmaisinS}
 Let $\wm$ be reflective. Then $\Sigma^N(\G) \subset S$.
\end{lemma}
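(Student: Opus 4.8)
The plan is to argue by contradiction using the combinatorial description of $\SNWM$ in Proposition~\ref{prop:adapnsphroots_general} (which applies because a reflective monoid is normal) together with the observation in Remark~\ref{rem:reflnocorootwall}. Suppose $\sigma \in \SNWM$ is not a simple root. Since $\SNWM \subseteq \Sigma^{sc}(G)$, I may write $\sigma = \sum_{\alpha \in \supp(\sigma)} n_\alpha \alpha$ with all $n_\alpha > 0$, and condition~(\ref{item:inasr3}) of Proposition~\ref{prop:adapnsphroots_general} becomes available precisely because $\sigma \notin S$. The goal is to produce some $\delta \in E(\wm)$ that pairs strictly positively with $\sigma$, and then to derive a contradiction with reflectivity.

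The first step is the elementary observation that $\sigma$ pairs strictly positively with at least one simple coroot. Let $(\cdot,\cdot)$ be a $W$-invariant inner product on $\wl \otimes_\Z \R$. Then $(\sigma,\sigma) = \sum_{\alpha \in \supp(\sigma)} n_\alpha (\alpha,\sigma) > 0$, so, since every $n_\alpha$ is positive, there is some $\beta \in \supp(\sigma)$ with $(\beta,\sigma) > 0$, whence $\<\beta^{\vee},\sigma\> > 0$. Consequently $-\sigma$ is not a dominant weight, so $-\sigma \notin \Q_{\ge 0}\dw$, and a fortiori $-\sigma \notin \Q_{\ge 0}\wm$ because $\wm \subseteq \dw$.

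The main step converts this into the existence of an element of $E(\wm)$ pairing positively with $\sigma$. Because $\wm$ has full rank, the cone $\Q_{\ge 0}\wm$ is full-dimensional and closed, $E(\wm)$ generates the extremal rays of the dual cone $\wm^{\vee} \subseteq \Hom_{\Z}(\Z\wm,\Q)$, and $(\wm^{\vee})^{\vee} = \Q_{\ge 0}\wm$. If we had $\<\delta,\sigma\> \le 0$ for every $\delta \in E(\wm)$, then $\<v,\sigma\> \le 0$ for all $v \in \wm^{\vee}$, forcing $-\sigma \in (\wm^{\vee})^{\vee} = \Q_{\ge 0}\wm$, contradicting the previous paragraph. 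Hence there is $\delta \in E(\wm)$ with $\<\delta,\sigma\> > 0$. Now condition~(\ref{item:inasr3}) of Proposition~\ref{prop:adapnsphroots_general} forces $\delta$ to be a positive multiple of $\beta^{\vee}|_{\Z\wm}$ for some simple root $\beta \in S\setminus S^p(\wm)$; but this directly contradicts Remark~\ref{rem:reflnocorootwall}, by which no element of $E(\wm)$ is proportional to the restriction of a simple coroot. This contradiction shows that every element of $\SNWM$ is a simple root, i.e.\ $\Sigma^N(\wm) \subseteq S$.

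I expect the main (though modest) obstacle to be the cone-duality bookkeeping in the second step, specifically justifying $(\wm^{\vee})^{\vee} = \Q_{\ge 0}\wm$ and that $E(\wm)$ generates $\wm^{\vee}$; this relies crucially on $\wm$ having full rank, so that all the relevant cones live in $\wl \otimes_\Z \Q$ and its dual and the biduality for finitely generated rational cones applies cleanly.
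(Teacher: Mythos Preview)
Your proof is correct and follows essentially the same strategy as the paper's: assume $\sigma\in\SNWM\setminus S$, find $\delta\in E(\wm)$ with $\<\delta,\sigma\>>0$, invoke Proposition~\ref{prop:adapnsphroots_general}(\ref{item:inasr3}), and contradict Remark~\ref{rem:reflnocorootwall}. The only difference is in how you produce $\delta$: the paper notes directly that $\alpha^{\vee}|_{\Z\wm}\in\wm^{\vee}$ (since $\wm\subset\dw$) and $\<\alpha^{\vee},\sigma\>>0$, so some extremal generator of $\wm^{\vee}$ must pair positively with $\sigma$; you instead pass through biduality $(\wm^{\vee})^{\vee}=\Q_{\ge 0}\wm$, which is the dual formulation of the same fact.
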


\begin{proof}
We assume for the sake of contradiction that there exists $\sigma \in \Sigma^N(\G)\setminus S$. We claim that there exists a simple root $\alpha$ such that $\<\alpha^\vee,\sigma\> >0$. Indeed, the set $S\cup \{\sigma\}$ is linearly dependent and lies in an open half-space, whence it contains two elements forming an acute angle. One of the two elements must be $\sigma$, and our claim is proved.

Since $\Gamma\subset \Lambda^+$, the convex cone $\wm^\vee$ contains all simple coroots. Then the existence of $\alpha$ implies that there exists $\delta\in E(\Gamma)$ such that $\<\delta,\sigma\> >0$. By Proposition~\ref{prop:adapnsphroots_general}(\ref{item:inasr3}), this implies that $\delta$ is a positive multiple of a simple coroot, contradicting the assumption that $\wm$ is reflective, by Remark~\ref{rem:reflnocorootwall}.
\end{proof}

The property of being Delzant, for a polytope $\P$, can also be restated locally in terms of  weight monoids. Recall from Definition~\ref{def:reflective_polytope} that $\P$ is Delzant with respect to a sublattice $\wl_0$ of $\wl$ if and only if for every vertex $a$ of $\P$ there exists a basis $F_a$ of $\wl_0$ such that $\mathcal{C}_a\P = \R_{\ge 0}F_a$. Now, if we consider a general polytope $\P$ in $\ft^+$ and set $\wm_a=\mathcal{C}_a\P \cap \wl_0$ for any vertex $a$ of $\P$, then $\P$ is Delzant if and only if $\dim \P = \operatorname{rank} \wl_0$ and for every vertex $a$ of $\P$  
\begin{equation}
\text{there exists a basis $F_a$ of $\wl_0$ such that }\wm_a = \N F_a. \label{eq:Delzant_monoid} 
\end{equation}
Moreover, \eqref{eq:Delzant_monoid} holds if and only if
\begin{equation}
\Z\wm_a = \wl_0 \text{ and $E(\wm_a)$ is a basis of $\wl_0^*$.} 
\end{equation}
\begin{lemma}\label{lemma:SigmaisS}
Let $\wm$ be reflective. Suppose that $S\subset \Z\wm$, that $E(\wm)$ is part of a basis of $\Z\wm^*$, and that $\Z\wm$ is $W$-invariant. Then $\Sigma^N(\wm)=S$.
\end{lemma}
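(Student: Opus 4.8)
The plan is to combine Lemma~\ref{lemma:SigmaisinS}, which already gives $\Sigma^N(\wm)\subseteq S$, with a direct verification that every simple root is N-adapted, so that $S\subseteq\Sigma^N(\wm)$ and hence equality holds. Thus I would fix a simple root $\alpha$ and check the conditions of Proposition~\ref{prop:adapnsphroots_general} for $\sigma=\alpha$, which lies in $\Sigma^{sc}(G)$ as the type-$\sA_1$ root $\alpha_1$. Since $\wm$ is reflective it has full rank, so Lemma~\ref{lem:SpSigmaN_GSat_FullRank}(\ref{lemma_item:Sp_GSat_FullRank}) gives $S^p(\wm)=\emptyset$; this makes condition~(\ref{item:inasr2}) vacuous, while condition~(\ref{item:inasr1}) is exactly the hypothesis $S\subset\Z\wm$. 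Conditions~(\ref{item:inasr3}), (\ref{item:inasr5}) and~(\ref{nsorths}) do not apply to a simple root, so everything reduces to condition~(\ref{item:inasr4}): that $a(\alpha)$ has two elements~(\ref{item:inasr4a}), that each of its elements is nonnegative on $\wm$~(\ref{item:inasr4b}), and that $\<\delta,\alpha\>\le 1$ for all $\delta\in E(\wm)$~(\ref{item:inasr4c}).

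The engine of the argument is the behaviour of $E(\wm)$ under the simple reflection $s_\alpha$. Because $\Z\wm$ is $W$-invariant, $s_\alpha$ acts on $\Z\wm^*$ by $s_\alpha\delta=\delta-\<\delta,\alpha\>\alpha^\vee|_{\Z\wm}$; because $\wm$ is reflective, $s_\alpha$ permutes the hyperplanes $\ker\delta$, $\delta\in E(\wm)$, so for each $\delta\in E(\wm)$ one has $s_\alpha\delta=\pm\delta'$ for a unique $\delta'\in E(\wm)$, the value $s_\alpha\delta=-\delta$ being excluded by Remark~\ref{rem:reflnocorootwall} (no $\delta$ is proportional to $\alpha^\vee|_{\Z\wm}$). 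I would first record that $\alpha^\vee|_{\Z\wm}\in\wm^\vee$, since $\wm\subset\dw$, and that it is nonzero because $S^p(\wm)=\emptyset$. Then, for $\delta\in E(\wm)$ with $k:=\<\delta,\alpha\>\ge 1$, the sign $+$ is impossible: it would give $\delta=\delta'+k\,\alpha^\vee|_{\Z\wm}$, writing the extremal-ray generator $\delta$ as a sum of two elements of $\wm^\vee$, neither of which lies on $\R_{\ge 0}\delta$ (for $\delta'$ this uses $\delta-\delta'=k\,\alpha^\vee|_{\Z\wm}$ and $\alpha^\vee\not\propto\delta$), contradicting extremality. Hence $s_\alpha\delta=-\delta'$ with $\delta+\delta'=k\,\alpha^\vee|_{\Z\wm}$ and $\delta\ne\delta'$ both in $E(\wm)$. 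As $E(\wm)$ is part of a $\Z$-basis of $\Z\wm^*$, the vector $\delta+\delta'$ is primitive, forcing $k=1$; this proves~(\ref{item:inasr4c}) and, along the way, shows that $\alpha^\vee|_{\Z\wm}$ is primitive and that $\delta\mapsto\alpha^\vee|_{\Z\wm}-\delta$ is a fixed-point-free involution of $E_1(\alpha):=\{\delta\in E(\wm):\<\delta,\alpha\>=1\}$.

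It then remains to count $a(\alpha)$. Using the involution just described one checks that $a(\alpha)=E_1(\alpha)$, so~(\ref{item:inasr4b}) is immediate from $E_1(\alpha)\subset E(\wm)\subset\wm^\vee$. The hard part will be~(\ref{item:inasr4a}), namely that $E_1(\alpha)$ is a single involution-pair. Nonemptiness is easy: writing $\alpha^\vee|_{\Z\wm}=\sum_{\delta\in E(\wm)}c_\delta\delta$ with $c_\delta\in\N$ and pairing with $\alpha$ gives $2=\sum_\delta c_\delta\<\delta,\alpha\>$, so some $\delta$ has $\<\delta,\alpha\>=1$. For the upper bound I would argue by contradiction: two distinct pairs $\{\delta_1,\delta_1'\}$ and $\{\delta_2,\delta_2'\}$ in $E_1(\alpha)$ would each sum to $\alpha^\vee|_{\Z\wm}$, yielding the nontrivial relation $\delta_1+\delta_1'-\delta_2-\delta_2'=0$ among four distinct elements of $E(\wm)$, which is impossible since $E(\wm)$ is linearly independent. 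Hence $|E_1(\alpha)|=2$ and $a(\alpha)$ has exactly two elements. This verifies all of Proposition~\ref{prop:adapnsphroots_general} for $\sigma=\alpha$, so $\alpha\in\Sigma^N(\wm)$; as $\alpha$ was an arbitrary simple root, $S\subseteq\Sigma^N(\wm)$, and together with Lemma~\ref{lemma:SigmaisinS} we conclude $\Sigma^N(\wm)=S$.
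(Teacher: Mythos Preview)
Your proof is correct and follows essentially the same approach as the paper's: both reduce to checking condition~(\ref{item:inasr4}) of Proposition~\ref{prop:adapnsphroots_general}, exploit the reflectivity hypothesis via the action of $s_\alpha$ on $E(\wm)$ (using $W$-invariance of $\Z\wm$ to preserve primitivity), and use the linear independence of $E(\wm)$ to force $\<\delta,\alpha\>=1$ and to bound $|a(\alpha)|$. The minor differences are cosmetic: you rule out the ``$+$'' sign for $s_\alpha\delta$ by extremality of $\delta$ in $\wm^\vee$, whereas the paper instead notes that $s_\alpha\delta\in E(\wm)$ would give two conflicting expressions of $\alpha^\vee|_{\Z\wm}$ in the partial basis $E(\wm)$; and your explicit identification $a(\alpha)=E_1(\alpha)$ together with the fixed-point-free involution $\delta\mapsto\alpha^\vee|_{\Z\wm}-\delta$ streamlines the counting argument that the paper carries out more ad hoc.
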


\begin{proof}
After Lemma~\ref{lemma:SigmaisinS}, it remains to show the inclusion $\Sigma^N(\G)\supset S$. Let $\a\in S$. To show that $\alpha\in\Sigma^N(X)$ we use Proposition~\ref{prop:adapnsphroots_general}, so we have to check the conditions required by that Proposition. 

Condition~(\ref{item:inasr1}), i.e.\ that $\alpha \in \Z\wm $, follows from our assumption $S\subset\Z\G$. Condition~(\ref{item:inasr2}) is met because $\{\beta \in \supp(\alpha)\colon \<\beta^{\vee},\alpha\>=0\} = \emptyset$.  We turn to the conditions~(\ref{item:inasr4}). We begin by proving that $a(\alpha)$ has two elements. By construction, $\wm\subset\Lambda^+$, hence $\alpha^\vee|_{\Z\wm}\in\wm^\vee$. Since $\<\alpha^{\vee},\alpha\>=2$, there exists $\delta \in E(\G)$ such that $\< \delta,\alpha\> >0$.
We show that $-s_\a\d\in\G^\vee$. Either $-s_\a\d$ or $s_\a\d$ lies on an extremal ray of $\G^\vee$, by reflectivity. Assume it is $s_\alpha\delta$, and consider the equality
\[
\la \d,\a\ra \a^\vee|_{\Z\wm}=\d-s_\a\d.
\]
It expresses $\alpha^\vee|_{\Z\wm}\in\Gamma^\vee$ as a linear combination of elements of $E(\Gamma)$, with both positive and negative coefficients. This contradicts the assumption that $E(\Gamma)$ is part of a basis, and yields that $-s_\a\d$ lies on an extremal ray of $\Gamma^\vee$.
Observe that $\delta$ is primitive in $(\Z\Gamma)^*$ by assumption, and $\Z\Gamma$ is $W$-invariant. This shows that $-s_\alpha\delta$ is also primitive in $(\Z\Gamma)^*$, hence $-s_\alpha\delta\in E(\Gamma)$.

Let us show now that $\<\delta,\alpha\>=1$. Otherwise, the element $\delta - s_\alpha\delta$ is not primitive in $(\Z\Gamma)^*$. But since $\{\delta,-s_\alpha\delta\}$ is part of a basis, then also $\{\delta,\delta-s_\alpha\delta\}$ is part of a basis. Therefore $\delta-s_\alpha\delta$ is primitive, and we conclude that $\<\delta,\alpha\>=1$.

At this point $\delta,-s_\alpha\delta$ are two elements of $E(\Gamma)$, both taking value $1$ on $\alpha$. They are distinct, otherwise we would have
\[
\alpha^\vee|_{\Z\wm} = 2\delta
\]
which contradicts the fact that $\delta$ is not a positive multiple of any simple coroot by Remark~\ref{rem:reflnocorootwall}. We deduce that both belong to $a(\alpha)$, hence $|a(\alpha)|\geq 2$.

We show that $|a(\alpha)|= 2$. If $|a(\alpha)|>2$, then there exists $\epsilon\in a(\alpha)\cap E(\Gamma)$ such that $\epsilon\notin\{\delta,-s_\alpha\delta\}$. We have $\<\epsilon,\alpha\>=1$, and by the above argument applied to $\epsilon$ instead of $\delta$, we have that $-s_\alpha\epsilon\in E(\Gamma)$. Then
\[
\alpha^\vee|_{\Z\wm} = \delta - s_\alpha\delta = \epsilon - s_\alpha\epsilon
\]
contradicts the assumed linear independence of $E(\Gamma)$. Therefore $|a(\alpha)|=2$, and we have verified condition~(\ref{item:inasr4a}) of  the Proposition.

We have also proved that $a(\alpha)\subset E(\Gamma)$, which implies in particular that $\<\delta, \gamma\> \ge 0$ for all $\delta \in a(\alpha)$ and all $\gamma \in \wm$, which is condition~(\ref{item:inasr4b}). 

Finally, it remains to check the last condition required by the Proposition, which is that $\<\delta, \alpha\> \le 1$ for all $\delta \in E(\wm)$. We have already proved in general that if $\delta\in E(\Gamma)$ satisfies $\<\delta,\alpha\> >0$, then $\<\delta,\alpha\>=1$, and this completes the proof.
\end{proof}

We single out the following already noticed consequence of the above proof.

\begin{corollary}\label{cor:SigmaisS}
Under the assumptions of Lemma~\ref{lemma:SigmaisS}, let $\alpha\in S$. Then $a(\alpha)$ consists of two elements $\delta_1,\delta_2 \in E(\wm)$ such that $\alpha^{\vee}|_{\Z\wm}=\delta_1+\delta_2$, and if $\delta\in E(\Gamma)$ satisfies $\<\delta,\alpha\> >0$, then $\<\delta,\alpha\>=1$.
\end{corollary}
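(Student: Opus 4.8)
The plan is to observe that every assertion of the Corollary was in fact already established in the course of proving Lemma~\ref{lemma:SigmaisS}; what remains is only to collect these facts and to record the additive relation explicitly. First I would recall that in that proof, for the given $\alpha \in S$, we produced an element $\delta \in E(\wm)$ with $\<\delta,\alpha\> > 0$ and showed, using the reflectivity of $\wm$ together with the hypothesis that $E(\wm)$ is part of a basis of $(\Z\wm)^*$, that $-s_\alpha\delta$ lies on an extremal ray of $\wm^\vee$ and is primitive in $(\Z\wm)^*$, hence that $-s_\alpha\delta \in E(\wm)$.

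Next I would invoke the two facts proved there: that $\<\delta,\alpha\> = 1$, and that $\delta \neq -s_\alpha\delta$ (otherwise $\alpha^\vee|_{\Z\wm} = 2\delta$, contradicting Remark~\ref{rem:reflnocorootwall}). Both $\delta$ and $-s_\alpha\delta$ take the value $1$ on $\alpha$, so both lie in $a(\alpha)$; combined with the equality $|a(\alpha)| = 2$ from the Lemma, this yields $a(\alpha) = \{\delta, -s_\alpha\delta\}$, and I would set $\delta_1 := \delta$ and $\delta_2 := -s_\alpha\delta$. The additive relation then follows immediately from the reflection identity $\<\delta,\alpha\>\,\alpha^\vee|_{\Z\wm} = \delta - s_\alpha\delta$ already used in the Lemma's proof: since $\<\delta,\alpha\> = 1$, we get $\delta_1 + \delta_2 = \delta - s_\alpha\delta = \alpha^\vee|_{\Z\wm}$.

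Finally, the last assertion---that any $\delta \in E(\wm)$ with $\<\delta,\alpha\> > 0$ satisfies $\<\delta,\alpha\> = 1$---is precisely the statement proved in the concluding sentence of the proof of Lemma~\ref{lemma:SigmaisS}, so I would simply cite it. Since nothing beyond that argument is required, there is no genuine obstacle; the only point deserving care is the sign convention in the reflection identity, so that $\delta - s_\alpha\delta$ is the \emph{positive} multiple $\<\delta,\alpha\>\,\alpha^\vee|_{\Z\wm}$ of the coroot and the sum $\delta_1 + \delta_2$ comes out equal to $\alpha^\vee|_{\Z\wm}$ rather than its negative.
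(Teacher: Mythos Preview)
Your proposal is correct and follows exactly the paper's approach: the paper's own proof is the single sentence ``We single out the following already noticed consequence of the above proof,'' and you have faithfully extracted and organized the relevant pieces from the proof of Lemma~\ref{lemma:SigmaisS}. The only thing to add is that the paper does not spell out the additive relation $\delta_1+\delta_2=\alpha^\vee|_{\Z\wm}$ explicitly, so your derivation of it from the reflection identity $\langle\delta,\alpha\rangle\,\alpha^\vee|_{\Z\wm}=\delta-s_\alpha\delta$ with $\langle\delta,\alpha\rangle=1$ is a useful clarification.
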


\begin{lemma}\label{lemma:reflectiveadapted}
Under the assumptions of Lemma~\ref{lemma:SigmaisS}, there exists an affine spherical variety $X$ with $\Sigma^N(X)=\Sigma^N(\Gamma)$ and $\Gamma(X)=\Gamma$.
\end{lemma}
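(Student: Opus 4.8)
The plan is to promote the purely numerical data attached to $\G$ to the combinatorial datum of a spherical homogeneous space, and then to cut out $\G$ by a suitable affine embedding. First I would record the ingredients already in hand. By Lemma~\ref{lemma:SigmaisS} we have $\SNWM=S$, and since $\G$ has full rank Lemma~\ref{lem:SpSigmaN_GSat_FullRank}(\ref{lemma_item:Sp_GSat_FullRank}) gives $S^p(\G)=\emptyset$. By Corollary~\ref{cor:SigmaisS}, for every $\a\in S$ the set $a(\a)$ consists of two elements $\d_1^\a,\d_2^\a\in E(\G)$ with $\a^\vee|_{\Z\G}=\d_1^\a+\d_2^\a$ and $\la\d_i^\a,\a\ra=1$, and moreover every $\d\in E(\G)$ with $\la\d,\a\ra>0$ already lies in $a(\a)$ and satisfies $\la\d,\a\ra=1$. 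These are exactly the colours and their $\rho$-images that a variety with weight monoid $\G$ and spherical roots $S$ must carry.

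Next I would assemble the homogeneous spherical datum $(\Z\G,\,S,\,\emptyset,\,\mathbf{A})$, where the lattice is $\Z\G$, the spherical roots $\Sigma$ are the simple roots $S$, the set of distinguished simple roots is $S^p(\G)=\emptyset$, and the colours $\mathbf{A}$ with their map $\rho$ to $(\Z\G)^*$ are prescribed by the sets $a(\a)$ (each $\a\in S$ contributing the two colours with $\rho$-images $\d_1^\a,\d_2^\a$). I would then verify Luna's axioms for this datum: axiom (A2) is exactly the identity $\a^\vee|_{\Z\G}=\d_1^\a+\d_2^\a$; the fact that $|a(\a)|=2$ for every $\a\in S$ (which here equals $\Sigma\cap S$) supplies (A3); the inequality in (A1) follows from $\la\d,\a\ra\le 1$ for all $\d\in E(\G)$ together with the last assertion of Corollary~\ref{cor:SigmaisS}, while (S1) is automatic because $\la\a^\vee,\b\ra\le 0$ for $\b\in S\setminus\{\a\}$; the compatibility conditions involving $S^p=\emptyset$ are vacuous. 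The nontrivial realizability (linear independence) requirements are guaranteed by the hypotheses of Lemma~\ref{lemma:SigmaisS}: that $E(\G)$ is part of a basis of $(\Z\G)^*$, and that $\Z\G$ is $W$-invariant (the latter being what makes $-s_\a\d$ primitive, exactly as in the proof of Lemma~\ref{lemma:SigmaisS}). By Losev's uniqueness theorem \cite{losev-uniqueness} together with the existence part of the classification of spherical subgroups, there is then a spherical homogeneous space $G/H$ realizing this datum, with weight lattice $\Z\G$ and spherical roots $S$.

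Finally I would realize $\G$ itself, and not merely $\Z\G$, by the Luna--Vust theory of embeddings. The cone $\G^\vee$ is generated by $E(\G)$, and by the last assertion of Corollary~\ref{cor:SigmaisS} every $\d\in E(\G)$ is either one of the colour functionals $\d_i^\a$ or else satisfies $\la\d,\a\ra\le 0$ for all $\a\in S$, and hence lies in the valuation cone $\V=\{v\colon \la v,\a\ra\le 0\text{ for all }\a\in S\}$ of $G/H$. The colored cone whose support is $\G^\vee$, taking the colours among the $a(\a)$ and invariant valuations for the remaining rays, defines a simple affine embedding $X\supset G/H$ whose algebra of global sections is $\bigoplus_{\lambda\in\G}\C[G/H]_{(\lambda)}$; thus $\Gamma(X)=\G$, and since $X$ contains the open orbit $G/H$ and all spherical roots in sight are the primitive simple roots $S$, one gets $\Sigma^N(X)=S=\SNWM$, as required.

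The main obstacle is the axiom check in the middle step. N-adaptedness only guarantees each simple root \emph{separately} (which is all that $\SNWM=S$ records by Definition~\ref{def:N_adapted_spherical_roots}), whereas the datum must realize \emph{every} simple root as a spherical root simultaneously; one must therefore confirm that the colours coming from the various $a(\a)$ fit together into a single consistent datum, and it is precisely here that the hypotheses ``$E(\G)$ is part of a basis'' and ``$\Z\G$ is $W$-invariant'' do the work. A secondary, more routine point is to check that the chosen colored cone indeed yields an affine variety with weight monoid exactly $\G$, rather than only the correct lattice $\Z\G$ and the correct set of spherical roots.
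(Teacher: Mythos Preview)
Your approach is correct in spirit and rests on the same core verification as the paper, but you are rebuilding by hand machinery that the paper simply cites. The paper's proof is three lines: by \cite[Proposition~2.7]{charwm_arxivv2} it suffices to check condition (b) of \cite[Proposition~2.24]{charwm_arxivv2}, namely that whenever $\alpha\in S\cap\SNWM$, $\delta\in a(\alpha)$, and $\gamma\in\SNWM$ satisfy $\<\delta,\gamma\> > 0$, one has $\gamma\in S$ and $\delta\in a(\gamma)$. Since $\SNWM=S$ and $\delta\in E(\G)$, Corollary~\ref{cor:SigmaisS} gives $\<\delta,\gamma\>=1$, hence $\delta\in a(\gamma)$, and the proof is done. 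This condition (b) is precisely your ``main obstacle''---the consistency of the colours $a(\alpha)$ across different simple roots---and you correctly identify Corollary~\ref{cor:SigmaisS} as the tool that resolves it.

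What you do differently is unpack the black box: you assemble the Luna homogeneous spherical datum, verify the axioms (A1)--(A3) and (S) directly, invoke the existence theorem for spherical subgroups, and then run Luna--Vust to produce the affine embedding. This is exactly what \cite[Propositions~2.7 and~2.24]{charwm_arxivv2} package, so your argument is morally the same but longer and with more moving parts to get right (for instance, you gesture at ``the existence part of the classification of spherical subgroups'' without naming it, and the passage from the colored cone to an \emph{affine} variety with weight monoid exactly $\G$ deserves a sentence of justification). The paper's route buys brevity and avoids re-proving infrastructure; your route buys transparency about what is actually being checked, at the cost of having to handle details that the cited propositions already absorb.
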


\begin{proof}
Thanks to \cite[Proposition~2.7]{charwm_arxivv2} we have to check that the condition (b) given in \cite[Proposition~2.24]{charwm_arxivv2} holds, i.e.\ that if $\alpha \in S \cap \Sigma^N(\Gamma)$, $\delta \in a(\alpha)$ and $\gamma \in \Sigma^N(\Gamma)$ satisfy $\<\delta,\gamma\> >0$, then $\gamma \in S$ and $\delta \in a(\gamma)$.
Let $\alpha, \delta$ and $\gamma$ be as above. Then $\gamma\in S$, $\delta\in E(\Gamma)$, and $\<\delta,\gamma\>=1$ by Corollary~\ref{cor:SigmaisS}. It follows that $\delta\in a(\gamma)$, and the proof is complete.
\end{proof}

\begin{lemma}\label{lemma:reflectivegeq0}
Under the assumptions of Lemma~\ref{lemma:SigmaisS}, let $\d\in E(\G)$ and $\a\in S$. Then $\la \d,\a \ra \ge 0$.
\end{lemma}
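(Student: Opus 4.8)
The plan is to argue by contradiction: suppose $\la \d,\a \ra < 0$ for some $\d\in E(\G)$ and $\a\in S$, and derive a contradiction with the linear independence of $E(\G)$. The central tool is the contragredient action of the simple reflection $s_\a$ on $(\Z\G)^*$, which is well defined precisely because $\Z\G$ is $W$-invariant by hypothesis. A direct computation gives the formula $s_\a\cdot\d = \d - \la\d,\a\ra\,\a^\vee|_{\Z\G}$, so that $s_\a\cdot\d - \d$ is a positive multiple of $\a^\vee|_{\Z\G}$.

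First I would show that $s_\a\cdot\d$ again lies in $E(\G)$. Since $\G\subset\dw$, the restricted coroot $\a^\vee|_{\Z\G}$ lies in $\G^\vee$; as $\la\d,\a\ra<0$, the formula above exhibits $s_\a\cdot\d$ as a nonnegative combination of $\d\in\G^\vee$ and $\a^\vee|_{\Z\G}\in\G^\vee$, whence $s_\a\cdot\d\in\G^\vee$. On the other hand, reflectivity of $\G$ (condition (2) of Definition~\ref{def:reflective}) says that the family of hyperplanes $\{\ker\d\colon \d\in E(\G)\}$ is $W$-stable; applying $s_\a$ and using that $s_\a\cdot\d$ is primitive in $(\Z\G)^*$ (again because $\Z\G$ is $W$-invariant), I obtain $s_\a\cdot\d=\pm\d'$ for some $\d'\in E(\G)$. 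Because $\G$ has full rank, $\G^\vee$ is pointed, so the sign must be $+$, and therefore $s_\a\cdot\d\in E(\G)$.

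Next I would invoke Corollary~\ref{cor:SigmaisS}, valid under the present hypotheses, to write $\a^\vee|_{\Z\G}=\d_1+\d_2$ with $\d_1,\d_2$ two distinct elements of $E(\G)$ satisfying $\la\d_i,\a\ra=1$. Substituting into the reflection formula yields
\[
s_\a\cdot\d = \d + c\,\d_1 + c\,\d_2, \qquad c := -\la\d,\a\ra > 0 .
\]
Now $\d\neq\d_1,\d_2$ since $\la\d,\a\ra<0\neq 1$, and $\d_1\neq\d_2$, so $\{\d,\d_1,\d_2\}$ are three distinct, hence linearly independent, elements of $E(\G)$. Inspecting the displayed identity, whether $s_\a\cdot\d$ equals one of $\d,\d_1,\d_2$ or is a fourth distinct element of $E(\G)$, in each case one reads off a nontrivial linear dependence among elements of $E(\G)$, contradicting the assumption that $E(\G)$ is part of a basis of $(\Z\G)^*$. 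This contradiction proves $\la\d,\a\ra\ge 0$.

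The main obstacle, and the step I expect to require the most care, is establishing that $s_\a\cdot\d\in E(\G)$: this is where reflectivity, the $W$-invariance of $\Z\G$, the primitivity of extremal vectors, and the pointedness of $\G^\vee$ all have to be combined correctly. Once $s_\a\cdot\d$ is known to be an extremal generator, the concluding contradiction is a short bookkeeping argument inside the linearly independent set $E(\G)$.
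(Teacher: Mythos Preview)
Your argument is correct and follows the same underlying idea as the paper: assume $\la\d,\a\ra<0$, apply the reflection $s_\a$ to $\d$, use reflectivity to relate the result to $E(\G)$, and derive a linear-dependence contradiction with the hypothesis that $E(\G)$ is part of a basis.

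The paper's version is a bit more economical in two respects. First, it does not pin down the sign of $s_\a\d$: it only uses that $-s_\a\d$ is a (possibly negative) scalar multiple of some element of $E(\G)$, which follows directly from reflectivity. Second, it does not invoke Corollary~\ref{cor:SigmaisS}; instead it simply writes
\[
\a^\vee|_{\Z\G}=\frac{1}{\la\d,\a\ra}\,\d+\frac{1}{\la\d,\a\ra}\,(-s_\a\d),
\]
notes that this has a negative coefficient on the element $\d\in E(\G)$, and contrasts it with the nonnegative expression of $\a^\vee|_{\Z\G}\in\G^\vee$ in terms of $E(\G)$. Your route, establishing $s_\a\cdot\d\in E(\G)$ and then substituting $\a^\vee|_{\Z\G}=\d_1+\d_2$, reaches the same contradiction but does a little more work than necessary.
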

\begin{proof}
For the sake of contradiction, we assume $\la \d,\a\ra <0$. By reflectivity, we have that $-s_\alpha\delta$ is a multiple of an element of $E(\Gamma)$. Then
 \[
  \a^\vee|_{\Z\wm} = \frac{1}{\la \d,\a\ra}\delta +\left( \frac{1}{\la\d,\a\ra}\right)\left(-s_\a\d\right)
 \]
is an expression of $\a^\vee|_{\Z\wm}$ as a linear combination of elements of $E(\G)$  with at least one negative coefficient. On the other hand, $\a^\vee|_{\Z\wm}$ can be written as a linear combination of elements of $E(\G)$ with non-negative rational coefficients. This contradicts the assumption that $E(\Gamma)$ is part of a basis of $(\Z\wm)^*$.
\end{proof}

We are ready to prove the main result of this section.

\begin{theorem}\label{thm:ourwoodward}
Let $\G$ be reflective. Suppose that $S\subset \Z\G$, that $E(\G)$ is part of a basis of $(\Z\G)^*$, and that $\Z\Gamma$ is $W$-invariant. Then $\G$ is smooth, and the semisimple type of $G$ is $(\mathsf A_1)^r$ for some $r\in \N$.
\end{theorem}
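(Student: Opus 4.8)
The plan is to read off everything from the structural results already established. Our hypotheses are precisely those of Lemma~\ref{lemma:SigmaisS}, so throughout we may use that $\Sigma^N(\G)=S$, and that by Corollary~\ref{cor:SigmaisS} every $\alpha\in S$ has $a(\alpha)=\{\delta_1,\delta_2\}$ with $\delta_1,\delta_2\in E(\G)$ and $\alpha^\vee|_{\Z\G}=\delta_1+\delta_2$. Two things must be shown: that the semisimple type of $G$ is $(\mathsf A_1)^r$, and that $\G$ is smooth.

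First I would pin down the semisimple type. Let $\alpha,\beta\in S$ be distinct. Writing $\alpha^\vee|_{\Z\G}=\delta_1+\delta_2$ as above and applying Lemma~\ref{lemma:reflectivegeq0} to each $\delta_i\in E(\G)$ gives
\[
\la\alpha^\vee,\beta\ra=\la\delta_1,\beta\ra+\la\delta_2,\beta\ra\ge 0.
\]
On the other hand $\la\alpha^\vee,\beta\ra\le 0$ because $\alpha,\beta$ are distinct simple roots, so $\la\alpha^\vee,\beta\ra=0$. Thus the simple roots are pairwise orthogonal, the Dynkin diagram of $G$ has no edges, and the semisimple type of $G$ is $(\mathsf A_1)^r$ with $r=|S|$.

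For smoothness I would invoke the general criterion \cite[Theorem~4.2]{charwm_arxivv2}. Lemma~\ref{lemma:reflectiveadapted} already produces an affine spherical $G$-variety $X$ with $\Gamma(X)=\G$ and $\Sigma^N(X)=S$, so the only thing left is the ``basis'' condition (condition~(3) in the notation of the proof of Theorem~\ref{thm:SL_2_times_C_star}), namely that the tuple of images $\rho(D)$ over the $B$-stable prime divisors $D$ of the most generic such $X$ be part of a basis of $(\Z\G)^*$. Since each spherical root is a simple root of type $\mathsf A_1$, the colors attached to $\alpha\in S$ have $\rho$-images equal to the two elements of $a(\alpha)\subset E(\G)$, and any remaining image lies on an extremal ray of $\G^\vee$, hence again in $E(\G)$. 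As $E(\G)$ is part of a basis by hypothesis, these images are automatically linearly independent once we know they are pairwise distinct; so it suffices to prove that the sets $a(\alpha)$ are pairwise disjoint. Suppose $\delta\in a(\alpha)\cap a(\beta)$ with $\alpha\ne\beta$. By Corollary~\ref{cor:SigmaisS} the element $\delta'=\alpha^\vee|_{\Z\G}-\delta$ is the second element of $a(\alpha)$ and lies in $E(\G)$; using $\la\delta,\beta\ra=1$ and the orthogonality $\la\alpha^\vee,\beta\ra=0$ proved above,
\[
\la\delta',\beta\ra=\la\alpha^\vee,\beta\ra-\la\delta,\beta\ra=-1<0,
\]
contradicting Lemma~\ref{lemma:reflectivegeq0}. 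Hence the $a(\alpha)$ are disjoint, the relevant family is a repetition-free subset of $E(\G)$, the basis condition holds, and $\G$ is smooth.

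The hard part will be the bookkeeping hidden in the smoothness step: one must match the abstract data of \cite[Theorem~4.2]{charwm_arxivv2} (the colors, their images $\rho(D)$, the $G$-invariant divisors, and the extremal rays of the cone $\mathcal{C}(\G,\Sigma^N)$) with the concrete sets $a(\alpha)$ and $E(\G)$, and in particular check that no $\rho(D)$ falls outside $E(\G)$ and that the only conceivable coincidences among the $\rho(D)$ are those between distinct $a(\alpha)$'s. Once this identification is secured, the disjointness computation above --- a two-line application of Lemma~\ref{lemma:reflectivegeq0} and Corollary~\ref{cor:SigmaisS} --- closes the argument.
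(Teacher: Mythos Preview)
Your treatment of the semisimple type is exactly the paper's argument. The smoothness step, however, has a real gap.

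You assert that after Lemma~\ref{lemma:reflectiveadapted} ``the only thing left is the basis condition'', but \cite[Theorem~4.2]{charwm_arxivv2} has three conditions, and you have silently dropped condition~(2) (that $\overline{\operatorname{soc}}(\G,\Sigma^N(\G))$ decomposes as a product of entries from \cite[Table~2]{charwm_arxivv2}). The paper checks this explicitly: it computes $S(\G,\Sigma^N(\G))=\emptyset$ and $\overline{\operatorname{soc}}(\G,\Sigma^N(\G))=(\emptyset,\emptyset,\emptyset,\emptyset,\mathcal{D}(\G,\Sigma^N(\G)),\emptyset)$, and then observes this is a product of the first two entries of that table. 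Lemma~\ref{lemma:reflectiveadapted} only guarantees that $\Sigma=\Sigma^N(\G)$ is an admissible input for the theorem; it does not discharge condition~(2).

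You have also misidentified condition~(3). It does \emph{not} ask that the images of all $B$-stable prime divisors form part of a basis; it concerns only the tuple $(\rho(D))_{D\in\mathcal{D}(\G,\Sigma^N(\G))}$, and the paper's computation shows that here $\mathcal{D}(\G,\Sigma^N(\G))=\{\delta\in E(\G):\delta|_S=0\}$, which is a genuine subset of $E(\G)$ and hence trivially part of a basis. Your disjointness argument for the sets $a(\alpha)$ is correct and pretty, but it is not what the criterion requires; the colors in $\bigcup_{\alpha}a(\alpha)$ do not enter $\mathcal{D}$ at all in this situation, because $\mathcal{C}(\G,\Sigma^N(\G))$ is the face of $\G^\vee$ on which every simple root vanishes and each element of $a(\alpha)$ takes the value $1$ on $\alpha$. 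In short, the paper verifies (1)--(3) by computing the full list of objects from \cite[Definition~4.1]{charwm_arxivv2}; your proposal replaces that computation by an incorrect paraphrase of condition~(3) and omits condition~(2) altogether.
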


\begin{proof}
Let us first prove the claim about the semisimple type of $G$. Assume $\a,\b \in S$ with $\a \ne \b$. Then, by Corollary~\ref{cor:SigmaisS}, we have
\[
 \a^\vee|_{\Z\wm}=\d_1+\d_2
\]
where $\d_1,\d_2\in E(\G)$, which yields
\[
 \la \a^\vee,\b\ra = \la \d_1, \b\ra + \la \d_2,\beta\ra.
\]
As both summands are non-negative by Lemma~\ref{lemma:reflectivegeq0}, we have $\la\a^\vee,\b\ra=0$. 

Let us prove that $\Gamma$ is smooth. We use \cite[Theorem~4.2]{charwm_arxivv2} with $\Sigma$ of \loccit\ equal to $\Sigma^N(\Gamma)$. Notice that this choice of $\Sigma$ is allowed by Lemma~\ref{lemma:reflectiveadapted}.

The objects defined in \cite[Definition~4.1]{charwm_arxivv2} are then the following.

\begin{itemize}
\item The set
\[
\A(\Gamma,\Sigma^N(\Gamma)) = \bigcup_{\alpha\in S} a(\alpha).
\]
\item The triple $\mathscr S(\Gamma,\Sigma^N(\Gamma))=(\emptyset, S, \A(\Gamma,\Sigma^N(\Gamma)))$.
\item The set $\mathcal V(\Gamma,\Sigma^N(\Gamma)) = \{v\in\Hom_\Z(\Z\Gamma,\Q): \<v,\sigma\>\leq 0\;\forall\sigma\in\Sigma^N(\Gamma)\}$.
\item The set $\Delta(\Gamma,\Sigma^N(\Gamma))=\A(\Gamma,\Sigma^N(\Gamma))$.
\item The set $\mathcal{C}(\Gamma,\Sigma^N(\Gamma))$ is the maximal face of $\Gamma^\vee$ of which the relative interior meets $\mathcal{V}(\Gamma,\Sigma^N(\Gamma))$. Since $\Sigma^N(\Gamma)=S$, any element of $\mathcal{C}(\Gamma,\Sigma^N(\Gamma))$ is zero on $S$.
\item The set $\mathcal{F}(\G,\Sigma^N(\Gamma)):=\{\delta\in\Delta(\G,\Sigma^N(\Gamma)):\delta \in \mathcal{C}(\G,\Sigma^N(\Gamma))\}$, which is empty since for all $\alpha\in S$ any element of $a(\alpha)$ takes value $1$ on $\alpha$.
\item The set $\mathcal{B}(\G, \Sigma^N(\Gamma))$ is the set of primitive elements of the lattice $(\Z\G)^*$ on extremal rays of $\mathcal{C}(\G,\Sigma^N(\Gamma))$ that do not contain any element of $\mathcal{F}(\G,\SNWM)$. This is equal to $\{\d \in E(\G): \delta|_S =0\}$.
\item The set $\mathcal{D}(\G, \Sigma^N(\Gamma))=\mathcal{B}(\G, \Sigma^N(\Gamma))\cup \mathcal{F}(\G,\Sigma^N(\Gamma))$, it is hence equal to $\mathcal{B}(\G, \Sigma^N(\Gamma))$.
\item The set $S(\G,\Sigma^N(\Gamma))$ is the set of the elements $\alpha\in \Sigma^N(\Gamma)$ such that no element in $\Delta(\Gamma,\Sigma^N(\Gamma))\setminus\mathcal{F}(\G,\Sigma^N(\Gamma))$ is in $a(\alpha)$. Then $S(\G,\Sigma^N(\Gamma))=\emptyset$.
\item The sextuple $\operatorname{soc}(\Gamma,\Sigma^N(\Gamma))=(\emptyset, S, \A(\G,\Sigma^N(\Gamma)), \emptyset, \mathcal{D}(\G,\Sigma^N(\Gamma)), \rho')$, where the map $\rho'\colon \A(\Gamma,\Sigma^N(\Gamma))\to (\Z\Sigma^N(\Gamma))^*$ is the restriction to the sublattice $\Z\Sigma^N(\Gamma)\subset \Z\Gamma$.
\item The sextuple $\overline{\operatorname{soc}}(\G,\Sigma^N(\Gamma))=(\emptyset, \emptyset, \emptyset, \emptyset, \mathcal{D}(\G,\Sigma^N(\Gamma)), \emptyset)$.
 \end{itemize}
The sextuple $\overline{\operatorname{soc}}(\G,\Sigma^N(\Gamma))$ is a product (in the sense of \cite[Section~3]{charwm_arxivv2}) of the first two entries of Table~2 in \cite{charwm_arxivv2}, which yields condition (2) of \cite[Theorem~4.2]{charwm_arxivv2}. Condition (1) of \cite[Theorem~4.2]{charwm_arxivv2} follows from Lemma~\ref{lemma:SigmaisS}. Condition (3) is fulfilled because $\mathcal{D}(\Gamma,\Sigma^N(\Gamma))$ is a subset of $E(\Gamma)$, and the proof is complete.
\end{proof}

Combining Theorem~\ref{thm:ourwoodward} with \cite[Theorem 11.2]{knop-autoHam} we obtain the following:
\begin{corollary} \label{cor:ourwoodward}
Let $K$ be a compact Lie group with weight lattice $\wl$, positive Weyl chamber $\ft^+$, Weyl group $W$ and set of simple roots $S$. Let $\wl_0$ be a sublattice of $\wl$ and $\P$ a convex polytope in $\ft^+$. If $\wl_0$ is $W_a$-invariant and contains $\{\alpha \in S\colon \<\alpha^{\vee},a\>=0\}$ for every vertex $a$ of $\P$, and if $\P$ is reflective and Delzant with respect to $\wl_0$, then there exists a multiplicity free Hamiltonian $K$-manifold $M$ such that $(\P_M,\wl_M)=(\P,\wl_0)$. 
\end{corollary}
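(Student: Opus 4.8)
The plan is to verify Knop's criterion \cite[Theorem 11.2]{knop-autoHam} at each vertex of $\P$, using Theorem~\ref{thm:ourwoodward} to produce the required smooth affine spherical varieties. Fix a vertex $a$ of $\P$ and put $\wm_a := C_a\P \cap \wl_0$, as in the discussion preceding Lemma~\ref{lemma:SigmaisS}. Recall from the introduction that $G(a)$ is a Levi subgroup of $G$ containing $T$, so that its weight lattice is again $\wl$, its Weyl group is $W_a$, its set of simple roots is $S_a := \{\alpha \in S \colon \<\alpha^{\vee}, a\> = 0\}$, and $C_a\ft^+$ is its dominant Weyl chamber. Thus $\wm_a$ is a submonoid of the dominant weights of $G(a)$ whose spanned cone is $C_a\P$, and I would check that it satisfies the hypotheses of Theorem~\ref{thm:ourwoodward} relative to $G(a)$.

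First I would extract the consequences of the Delzant hypothesis. Since $\P$ is Delzant with respect to $\wl_0$, the local reformulation recalled around \eqref{eq:Delzant_monoid} gives $\wm_a = \N F_a$ for some basis $F_a$ of $\wl_0$; in particular $\wm_a$ is normal, $\Z\wm_a = \wl_0$, and $E(\wm_a)$ is a basis of $(\Z\wm_a)^* = \wl_0^*$, hence certainly part of a basis. The assumption that $\wl_0$ is $W_a$-invariant is exactly the $W_a$-invariance of $\Z\wm_a$, and the assumption $\{\alpha \in S \colon \<\alpha^{\vee}, a\> = 0\} \subset \wl_0$ is exactly $S_a \subset \Z\wm_a$. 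It remains to check that $\wm_a$ is reflective in the sense of Definition~\ref{def:reflective} with respect to $G(a)$.

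The reflectivity of $\wm_a$ is the crux, and I would obtain it by translating the conditions of Definition~\ref{def:reflective_polytope} at the point $a$ into those of Definition~\ref{def:reflective}. Full rank holds because $\operatorname{rank} \Z\wm_a = \operatorname{rank}\wl_0 = \dim\P = \operatorname{rank}\wl$, using that $\P$ is of maximal dimension and that $\wl$ is the weight lattice of $G(a)$. The codimension-one faces of $C_a\P$ are the tangent cones at $a$ of the codimension-one faces of $\P$ through $a$, so the stability under $W_a$ of the hyperplanes they span is exactly the $W_a$-stability required of $\P$ at $a$ in Definition~\ref{def:reflective_polytope}. Finally, if such a face $H$ of $\P$ meets the relative interior of $\ft^+$ at a point $p$, then $p-a$ lies in the corresponding face of $C_a\P$ and satisfies $\<\alpha^{\vee}, p-a\> = \<\alpha^{\vee}, p\> > 0$ for every $\alpha \in S_a$, so that face meets the open positive Weyl chamber of $G(a)$; this is condition~(\ref{def:reflective:open}) of Definition~\ref{def:reflective}.

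With all hypotheses in place, Theorem~\ref{thm:ourwoodward} shows that $\wm_a$ is a smooth weight monoid for $G(a)$, so there is a smooth affine spherical $G(a)$-variety $X_a$ with $\wm(X_a) = \wm_a$. Then the cone generated by $\wm(X_a)$ is $C_a\P$ and the group it generates is $\Z\wm_a = \wl_0$, so the conditions \eqref{eq:sphericity} hold at $a$. Since this argument applies at every vertex $a$ of $\P$, \cite[Theorem 11.2]{knop-autoHam} yields a multiplicity free Hamiltonian $K$-manifold $M$ with $(\P_M, \wl_M) = (\P, \wl_0)$, as desired. I expect the main obstacle to be the faithful dictionary between $G(a)$ and its combinatorial invariants $S_a$, $W_a$, $C_a\ft^+$, together with the careful transfer of the polytope-reflectivity conditions at $a$ to the monoid-reflectivity conditions for $\wm_a$; once this dictionary is set up, the remaining verifications are immediate from the Delzant property.
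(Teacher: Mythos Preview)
Your proof is correct and follows essentially the same approach as the paper's own proof: reduce to Knop's criterion at each vertex $a$, identify the relevant invariants of $G(a)$, and verify the hypotheses of Theorem~\ref{thm:ourwoodward} for $\wm_a = C_a\P \cap \wl_0$. You spell out the verification that $\wm_a$ is reflective for $G(a)$ in more detail than the paper does (the paper simply asserts it from the reflectivity and Delzant hypotheses on $\P$), but otherwise the arguments are identical.
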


\begin{proof}
As in the introduction, we denote by $G$ the complexification of $K$. Then for all $a\in \P$ the subgroup $G(a)\subset G$ is a Levi with set of simple roots $\{\alpha \in S\colon \<\alpha^{\vee},a\>=0\}$. After \cite[Theorem 11.2]{knop-autoHam}, we must prove that for all vertices $a$ of $\mathcal{P}$ the intersection $\Gamma = C_a\mathcal{P} \cap \wl_0$ is a smooth weight monoid for the reductive group $G(a)$. Notice that $\Z\Gamma=\wl_0$, since $\P$ is reflective and thus of maximal dimension, and that $\Z\Gamma$ is $W_a$-invariant by assumption.

Since $\P$ is reflective and Delzant with respect to $\wl_0$, the monoid $\Gamma$ is reflective and $E(\Gamma)$ is a basis of $(\Z\Gamma)^*$. The set of simple roots of $G(a)$ is contained in $\Z\Gamma$ by our assumptions. At this point all hypotheses of Theorem~\ref{thm:ourwoodward} are satisfied, therefore $\Gamma$ is smooth.
\end{proof}

We remark that the conditions on the lattice $\wl_0$ in the corollary are met if $S \inn \wl_0$ and $\wl_0$ is $W$-invariant. This corollary is slightly more general than Theorem~\ref{thm:woodward}, as it allows for lattices $\wl_0$ which are not equal to the weight lattice of $K$, see the examples below. 

\begin{example}
Let $G=\GL(2)$, fix $a\in \Z$ with $a \notin \{-1,0\}$, and define $\G=\la \w_1+a\w_2, \w_1-(a+1)\w_2\ra_\N$, where $\omega_i$ is the highest weight of $\bigwedge^i\C^2$ for all $i\in\{1,2\}$. We claim that $\G$ is smooth, and $\Z\G \neq \wl$. We show that $\Gamma$ is smooth by checking that it satisfies the assumptions of Theorem~\ref{thm:ourwoodward}. Set $\lambda_1 = \w_1+a\w_2$ and $\lambda_2=\w_1-(a+1)\w_2$. As $\a=\lambda_1+\lambda_2$, we have $\a\in \Z\G$, and since $s_\a(\lambda_1)=-\lambda_2, s_\a(\lambda_2)=-\lambda_1$, the lattice $\Z\Gamma$ is $W$-invariant.
The monoid $\G$ has full rank, and since the walls of $\G$ are generated by $\lambda_1$ and $\lambda_2$, one checks easily that $\G$ is reflective. As $\G$ is free, the set $E(\G)$ is a basis of $(\Z\G)^*$. So $\G$ fulfills all the assumptions of Theorem~\ref{thm:ourwoodward}, and hence $\G$ is smooth. Finally, it is elementary to check that $\Z\G$ does not contain $\omega_1$.
\end{example}

\begin{example}
We fix the lattice $\Z\Gamma$ with $\Gamma$ as in the above example. Consider the polytope $\P=\operatorname{conv}(0,\lambda_1, \lambda_2)$: it is elementary to check that it is Delzant in the vertices $\lambda_1$ and $\lambda_2$. Then, thanks to the above example and \cite[Theorem~11.2]{knop-autoHam}, the polytope $\P$ is the moment polytope of a Hamiltonian manifold for the group $\mathrm{U}(2)$ with nontrivial generic isotropy group.
\end{example}

\def\cprime{$'$} \def\cprime{$'$} \def\cprime{$'$} \def\cprime{$'$}
  \def\cprime{$'$}
\providecommand{\bysame}{\leavevmode\hbox to3em{\hrulefill}\thinspace}
\providecommand{\MR}{\relax\ifhmode\unskip\space\fi MR }
\providecommand{\MRhref}[2]{%
  \href{http://www.ams.org/mathscinet-getitem?mr=#1}{#2}
}
\providecommand{\href}[2]{#2}

\end{document}